\newcommand{\sofour}{$\mathfrak{so}_4$}
\newcommand{\sotwon}{$\mathfrak{so}_{2n}$}
\newcommand{\half}{\frac{1}{2}}
\newcommand{\rhoCC}{\rho_{\mathbb{C}^{2n}\otimes\mathbb{C}^{2n}}}
\newcommand\cev[1]{\overleftarrow{#1}}
\DeclareMathOperator{\Tr}{Tr}
\newtheorem{theorem}{Theorem}[section]
\newtheorem{proposition}[theorem]{Proposition}
\newtheorem{lemma}[theorem]{Lemma}
\theoremstyle{remark}
\newtheorem{remark}{Remark}
\newtheorem{example}{Example}
\newtheorem{definition}[theorem]{Definition}
\title{Interacting particle systems with type $D$ symmetry and duality }
\author{Jeffrey Kuan, Mark Landry, Andrew Lin, Andrew Park, Zhengye Zhou}
\begin{document}

\maketitle
\date{}

\begin{abstract}
We construct a two--class asymmetric interacting particle system with $\mathcal{U}_q(\mathfrak{so}_6)$ or $\mathcal{U}_q(\mathfrak{so}_8)$  symmetry, in which up to two particles may occupy a site if the two particles have different class. The particles exhibit a drift, but there is no preference given between first--class and second--class particles. The quantum group symmetry leads to reversible measures and a self--duality for the particle system. Additionally, a new method is developed to construct a symmetric interacting particle system from the Casimir element of $\mathfrak{so}_{2n}$.
\end{abstract}

Recent research \cite{BS2},\cite{BS},\cite{KuanJPhysA},\cite{KIMRN},\cite{BS3} has shown that multi--class (also known as multi--species, or colored) particle systems can be constructed from the representation theory of the Drinfeld--Jimbo quantum group $\mathcal{U}_q(\mathfrak{g})$, where $\mathfrak{g}$ is a Lie algebra of rank at least two. The generator of the particle system is constructed from a central element of $\mathcal{U}_q(\mathfrak{g})$. The symmetry of the generator with the quantum group leads to a self--duality for the particle system. The general framework connecting quantum groups to duality was laid out in \cite{CGRS}; see also the previous papers \cite{Sch97} and \cite{IS}, and also \cite{CGRS2}.

This paper will construct central elements of $\mathcal{U}_q(\mathfrak{so}_6)$ and $\mathcal{U}_q(\mathfrak{so}_8)$ using a general method articulated in \cite{KuanJPhysA}, and then constructs a two--class asymmetric interacting particle system from these central elements. The particle system is a generalization of the asymmetric simple exclusion process (ASEP), where two particles may occupy the same site if they have different class. A self--duality is then proven using the quantum group symmetry.

When constructing the particle systems, certain states corresponding to ``negative probabilities'' are discarded. It is natural to aim to construct a process with no ``discarded states.'' By allowing for more flexibility in the construction, such a process is constructed from the Casimir element of $\mathcal{U}(\mathfrak{so}_{2n})$ for generic values of $n$.

The paper is outlined as follows: section \ref{First} provides the relevant background and states the algebraic results (a central element of $\mathcal{U}_q(\mathfrak{so}_6)$ and $\mathcal{U}_q(\mathfrak{so}_8)$) and probabilistic results (reversible measures and self--duality of a two-class particle system). Section \ref{Second} provides the proofs of the results stated in section \ref{First}. Section \ref{Third} constructs a symmetric interacting particle system from the Casimir element of $\mathcal{U}(\mathfrak{so}_{2n})$. 

\textbf{Acknowledgements}
This research was conducted as part of a REU at Texas A\&M University, funded by the National Science Foundation  (DMS--1757872).

\section{Definitions and Results}\label{First}

\subsection{Definitions}
\subsubsection{Algebraic Definitions}

The Lie algebra \sotwon\  consists of matrices of the form:
\[
\mathfrak{so}_{2n}(\mathbb{C})=\bigg\{
\begin{pmatrix}
A & B\\
C & D
\end{pmatrix}
\bigg{|}A,B,C,D\in\mathbb{C}^{n\times n}, A=-D^T,B=B^T,C=C^T\bigg\}
\]
Let $E_{i,j}$ be the matrix with 1 in the $(i,j)$ entry and 0 elsewhere. A Cartan subalgebra $\mathfrak{h}$ of \sotwon \ is the subalgebra of diagonal matrices.

Let $L_i$ be defined as the linear map on the Cartan subalgebra where $L_i$ maps $H$ to its $i$-th diagonal entry. Then the roots and root vectors of \sotwon \ are defined for $1 \leq i,j\leq n$ as:
\begin{itemize}
    \item $X_{i,j}=E_{i,j}-E_{n+j,n+i}$ has root $L_i-L_j$
    \item $Y_{i,j}=E_{i,n+j}-E_{j,n+i}$ has root $L_i+L_j$
    \item $Z_{i,j}=E_{n+i,j}-E_{n+j,i}$ has root $-L_i-L_j$
\end{itemize}

The positive roots are $R^+=\{L_i+L_j\}_{i<j}\cup \{L_i-L_j\}_{i<j}$, and the simple roots are $L_1-L_2,L_2-L_3,\dots,L_{n-1}-L_{n},L_{n-1}+L_n$. Additionally, in \sotwon, the Killing form can be defined using $B(X,Y)=(2n-2)\Tr(XY)$.

Below, we list a few key algebraic properties associated to this Lie algebra:

\begin{itemize}
\item The rank of the Lie algebra $\mathfrak{so}_{2n}$ is $n$. Specifically, if $L_i$ denotes the linear operator taking a matrix $M$ to its diagonal entry $M_{i,i}$, then the set of roots is $\{L_i - L_j: 1 \le i \ne j \le n\}$, and the positive simple roots are $\alpha_i = L_i - L_{i+1}$ for $1 \le i \le n-1$ and $\alpha_n = L_{n-1} + L_n$.
\item The \textit{Cartan subalgebra} $\mathfrak{h}$ of $\mathfrak{so}_{2n}$ is the subalgebra of diagonal matrices spanned (as a vector space) by the matrices $H_n = E_{n-1, n-1} + E_{n, n} - E_{2n-1, 2n-1} - E_{2n, 2n}$ and, for $1 \le i \le n-1$),
\[
    H_i = E_{i,i} - E_{i+1,i+1} - E_{n+i,n+i} + E_{n+i+1, n+i+1},
\]
where $E_{a, b}$ denotes the matrix with a $1$ in the $(a, b)$th entry and $0$s elsewhere. In other words, $H_i$ takes its first $n$ diagonal entries from the $i$th positive simple root and then flips the signs for the next $n$ entries (so that $A = -D^T$).
\item The corresponding Dynkin diagram is $D_n$. In particular, all nonzero off-diagonal entries of the Cartan matrix are $-1$: 
\[
    a_{ij} = \begin{cases} 2 & i = j \\ -1 & \{i, j\} = \{n-2, n\} \text{ or } \{k, k+1\}, 1 \le k \le n-2 \\ 0 & \text{otherwise.} \end{cases}
\]
\item In the \textbf{fundamental representation} of $\mathfrak{so}_{2n}$, all elements of the Lie algebra act on a $2n$-dimensional vector space. Certain elements correspond to simple roots and will be referred to later in the paper: for all $1 \le i \le n-1$, we have
\[ 
    E_i = E_{i,i+1} - E_{n+i+1,n+i}, \quad E_n = E_{n-1, 2n} - E_{n,2n-1},
\]
\[
    F_i = E_{i+1,i} - E_{n+i,n+i+1}, \quad F_n = E_{2n-1,n} - E_{2n,n-1}. 
\]
The weights of the fundamental representation are $\pm L_i$ (for $1 \le i \le n$).
\end{itemize}

\begin{definition}
The \textbf{quantum group} $U_q(\mathfrak{so}_{2n})$ is the algebra generated by $\{E_i, F_i, q^{H_i}: 1 \le i \le n\}$. These generators satisfy the relations
\[
    [E_i, F_i] = \frac{q^{H_i} - q^{-H_i}}{q-q^{-1}}, \quad q^{H_i}E_j = q^{(\alpha_i, \alpha_j)} E_j q^{H_i}, \quad q^{H_i} F_j = q^{-(\alpha_i, \alpha_j)} F_j q^{H_i}, 
\]
as well as the Serre relation for all $(i, j)$ with $a_{ij} = -1$ in the Cartan matrix 
\[
    E_i^2E_j + E_jE_i^2 = (1+q) E_iE_jE_i, \quad F_i^2F_j + F_jF_i^2 = (1+q) F_iF_jF_i,
\]
and all other pairs of elements commuting. Finally, the coproducts of the generators are
\[
    \Delta (E_i) = E_i \otimes 1 + q^{H_i} \otimes E_i, \quad \Delta(F_i) = 1 \otimes F_i + F_i \otimes q^{-H_i}, \quad \Delta(q^{H_i}) = q^{H_i} \otimes q^{H_i}.
\] 
We will also write $K_i$ for $q^{H_i}$.
\end{definition}

A general definition for the quantum group of a Lie algebra $\mathfrak{g}$ can be found in \cite{jantzen}; this quantum group serves as a deformation of the ordinary universal enveloping algebra $U(\mathfrak{g})$.

Constructing a nondegenerate bilinear form often allows us to study relationships between elements of a vector space, and Chapter 6 of \cite{jantzen} introduces a similar idea. Recall that the \textbf{Borel subalgebras} $\mathfrak{b}\pm$ are the Lie subalgebras generated by $\{E_i, H_i\}$ and $\{F_i, H_i\}$, respectively. Let $U_q(\mathfrak{b}\pm)$ denote the corresponding subalgebras of the quantum group generated by the Borel subalgebras (replacing $H_i$ with $q^{H_i}$), and let $\langle \, , \, \rangle$ be the bilinear pairing $U_q(\mathfrak{b}-) \times U_q(\mathfrak{b}+) \to \mathbb{Q}(q)$ such that for any linear combinations $\alpha, \beta$ of the $\alpha_i$s, we have
\[
    \langle q^{H_\alpha}, q^{H_\beta} \rangle = q^{-(\alpha \cdot \beta)} \text{ and } \langle F_i, E_j \rangle = -\delta_{ij}(q-q^{-1})^{-1},
\]
and all other pairings between generators are zero. Furthermore, the pairing can be computed for products via
\[
    \langle y, xx' \rangle = \langle \Delta(y), x' \otimes x \rangle, \quad \langle yy', x \rangle = \langle y \otimes y', \Delta(x) \rangle,
\]
where the coproduct $\Delta$ satisfies $\Delta(ab) = \Delta(a)\Delta(b)$ and has explicit form given in the definition above. (Here, $\langle x_1 \otimes x_2, y_1 \otimes y_2 \rangle$ is defined to be $\langle x_1, y_1 \rangle \langle x_2, y_2 \rangle$.) 

\subsubsection{Probabilistic definitions}
Recall that two Markov processes $X(t)$ and $Y(t)$ on state spaces $\mathcal{X}$ and $\mathcal{Y}$ are dual to the function $D(x,y)$ on $\mathcal{X} \times \mathcal{Y}$ if
$$
\mathbb{E}_x[D(X(t),y)] = \mathbb{E}_y[D(x,Y(t))]
$$
for all $x\in \mathcal{X}, y \in \mathcal{Y}$ and $t \geq 0$. If $\mathcal{X}$ and $\mathcal{Y}$ are discrete and $L_X,L_Y$ denote the generators of $X(t),Y(t)$, then an equivalent description is
$$
L_XD = DL_Y^T
$$
where $^T$ denotes transpose. If $L_X=L_Y$ then we say that the Markov process is self--dual.

\subsection{Results}

\subsubsection{Algebraic results}

To make the expression more readable, we adopt some shortcuts: set $r = q - \frac{1}{q}$, and let $E_{x_1x_2 \cdots x_n}$ denote $E_{x_1}E_{x_2} \cdots E_{x_n}$ (and similar for $F$s). (For example, $r^2 E_{21}F_3$ would denote $(q - \frac{1}{q})^2 E_2E_1F_3$.)

\begin{theorem}\label{Center}
(a) The following element of the quantum group $U_q(\mathfrak{so}_6)$ is central: 
\begin{footnotesize}
\begin{equation*}
\begin{split}
q^{-4-2H_1-H_2-H_3} + q^{-2-H_2-H_3} + q^{H_2-H_3} + q^{H_3-H_2} + q^{2+H_2+H_3} + q^{4+2H_1+H_2+H_3} + \tfrac{r^2}{q^3} F_1q^{-H_1-H_2-H_3}E_1 \\
+ \tfrac{r^2}{q} F_2q^{-H_3}E_2 - \tfrac{r^2}{q} F_3q^{-H_2}E_3 + r^2qF_2q^{H_3}E_2 - r^2qF_3q^{H_2}E_3 + r^2q^3F_1q^{H_1+H_2+H_3}E_1 \\+ \tfrac{r^2}{q^3} (qF_{12} - F_{21})q^{-H_1-H_3}(qE_{21} - E_{12}) - \tfrac{r^2}{q^3} (qF_{13} - F_{31}) q^{-H_1-H_2}(qE_{31} - E_{13}) \\+ r^2q (qF_{21} - F_{12})q^{H_1 + H_3}(qE_{12} - E_{21}) - r^2q(qF_{31} - F_{13})q^{H_1+H_2}(qE_{13} - E_{31}) 
\\ - \tfrac{r^2}{q^3}(q^2F_{123} - qF_{213} - qF_{312} + F_{231})q^{-H_1} (q^2E_{231} - qE_{312} - qE_{213} + E_{123})
\\ -\tfrac{r^2}{q} (q^2F_{231} - qF_{312} - qF_{213} + F_{123})q^{H_1}(q^2E_{123} - qE_{213} - qE_{312} + E_{231})
\\ -\tfrac{r^4}{q^2} ((q^2+1)F_{1231} - qF_{1312} - qF_{2131})((q^2+1)E_{1231} - qE_{1312} - qE_{2131})
\\ - r^4F_2F_3E_2E_3.
\end{split}
\end{equation*}
\end{footnotesize}
This element acts as $q^6 + q^2 + 2 + q^{-2} + q^{-6}$ times the identity matrix in the fundamental representation of $\mathfrak{so}_6$.

(b) The following element of the quantum group $U_q(\mathfrak{so}_8)$ is central: 
\begin{footnotesize}
\begin{equation*}
\begin{split}
q^{-6-2H_1-2H_2-H_3-H_4} + q^{-4-2H_2-H_3-H_4} + q^{-2-H_3-H_4} + q^{H_3-H_4} \\
+q^{H_4 - H_3} + q^{2 + H_3+H_4} + q^{4 + 2H_2 + H_3 + H_4} + q^{6 + 2H_1 + 2H_2 + H_3 + H_4} \\
+ \tfrac{r^2}{q^5} F_1q^{-H_1-2H_2-H_3-H_4}E_1 + \tfrac{r^2}{q^5} (qF_{12} - F_{21}) q^{-H_1-H_2-H_3-H_4} (qE_{21} - E_{12}) \\
 + \tfrac{r^2}{q^5} (q^2F_{123} - qF_{132} - qF_{213} + F_{321}) q^{-H_1-H_2-H_4} (q^2E_{321} - qE_{213} - qE_{132} + E_{123}) \\
 - \tfrac{r^2}{q^5} (q^2F_{124} - qF_{142} - qF_{241} + F_{421} q^{-H_1 -H_2-H_3} (q^2E_{421} - qE_{241} - qE_{142} + E_{124}) \\
 - \tfrac{r^2}{q^5} \boxed{A_1} q^{-H_1 - H_2} \boxed{A_{4}} - \tfrac{r^2}{q^5} \boxed{A_5} q^{-H_1} \boxed{A_8} - \tfrac{r^4}{q^4} \boxed{A_9}\,\boxed{A_{10}} \\
 + \tfrac{r^2}{q^3} F_2q^{-H_2-H_3-H_4}E_2 + \tfrac{r^2}{q^3} (qF_{23} - F_{32}) q^{-H_2-H_4} (qE_{32} - E_{23}) - \tfrac{r^2}{q^3} (qF_{24} - F_{42}) q^{-H_2-H_3} (qE_{42} - E_{24}) \\
 - \tfrac{r^2}{q^3} (q^2F_{234} - qF_{324} - qF_{423} + F_{432}) q^{-H_2} (q^2E_{432} - qE_{324} - qE_{423} + E_{234}) \\ 
 -\tfrac{r^4}{q^2} ((q^2+1)F_{2342} - qF_{3242} - qF_{2423})((q^2+1)E_{2342} -
 qE_{3242} - qE_{2423}) \\
 - \tfrac{r^2}{q^3} \boxed{A_7} q^{H_1} \boxed{A_6} + \tfrac{r^2}{q}F_3q^{-H_4}E_3 - \tfrac{r^2}{q}F_4q^{-H_3}E_4 \\
 - r^4F_3F_4E_4E_3 - \tfrac{r^2}{q}(q^2F_{432} - qF_{324} - qF_{423} + F_{234}) q^{H_2} (q^2E_{234} - qE_{324} - qE_{423} + E_{432}) \\
 - \tfrac{r^2}{q}\boxed{A_{3}} q^{H_1+H_2}\boxed{A_2} - r^2qF_4q^{H_3}E_4 - r^2q (qF_{42} - F_{24})q^{H_2+H_3} (qE_{24} - E_{42}) \\
 - r^2q(q^2F_{421} - qF_{241} - qF_{142} + F_{124}) q^{H_1+H_2+H_3}(q^2E_{124} - qE_{142} - qE_{241} + E_{421}) \\
 + r^2qF_3q^{H_4}E_3 + r^2q(qF_{32} - F_{32}) q^{H_2+H_4}(qE_{23} - E_{32}) \\
 + r^2q(q^2F_{321} - qF_{213} - qF_{132} + F_{123}) q^{H_1 + H_2 + H_4} (q^2E_{123} - qE_{132} - qE_{213} + E_{321}) \\
 + r^2q^3F_2q^{H_2+H_3+H_4}E_2 + r^2q^3(qF_{21} - F_{12})q^{H_1+H_2+H_3+H_4}(qE_{12}-E_{21}) + r^2q^5F_1q^{H_1+2H_2+H_3+H_4}E_1.
\end{split}
\end{equation*}
\end{footnotesize}
where
\[
    A_1 = q^3 F_{1234} - q^2 F_{2314} - q^2 F_{3124} - q^2 F_{1423} + q F_{4213} + q F_{3241} + q F_{4132} - F_{4321},
\]
\[
    A_2 = q^3 E_{1234} - q^2 E_{2314} - q^2E_{3124} - q^2 E_{1423} + qE_{4213} + q E_{3241} + q E_{4132} - E_{4321},
\]
\[
    A_3 = q^3 F_{4321} - q^2 F_{4132} - q^2 F_{4213} - q^2 F_{3241} + q F_{1423} + q F_{2314} + q F_{3124} - F_{1234},
\]
\[
    A_4 = q^3 E_{4321}  - q^2 E_{4132} - q^2 E_{4213} - q^2 E_{3241} + q E_{1423} + q E_{2314} + q E_{3124} - E_{1234}
\]
and
\begin{equation*}
\begin{split}
    A_5 = q^4 F_{12342} + F_{23421} + q^2 F_{42132} + q^2 F_{24123} - q F_{23214} + q^2 F_{23124} - (q^3 + q) F_{23142}\\ - \tfrac{q^4}{q^2+1} F_{12243}  - q^3 F_{13242} + q^2 F_{32142} + \tfrac{q^4 - q^2}{q^2 + 1} F_{42213} - q^3 F_{42123} - \tfrac{q^2}{q^2 +1} F_{34221}, \\
    A_6 = q^4 E_{12342} + E_{23421} + q^2 E_{42132} + q^2 E_{24123} - q E_{23214} + q^2 E_{23124} - (q^3 + q) E_{23142} \\- \tfrac{q^4}{q^2 + 1} E_{12243} - q^3 E_{13242} + q^2 E_{32142} + \tfrac{q^4 - q^2}{q^2 + 1} E_{42213} - q^3 E_{42123} - \tfrac{q^2}{q^2 +1} E_{34221}, \\
    A_7 = F_{12342} + q^4 F_{23421} + q^2 F_{42132} + q^2 F_{24123} - q^3 F_{23214} + q^2  F_{23124} - (q^3 + q) F_{23142} \\ - \tfrac{q^2}{q^2+1} F_{12243} - qF_{13242} + q^2 F_{32142} - \tfrac{q^4-q^2}{q^2+1}F_{42213} - q F_{42123} - \tfrac{q^4}{q^2+1}F_{34221}, \\
    A_8 = E_{12342} + q^4 E_{23421} + q^2 E_{42132} + q^2 E_{24123} - q^3 E_{23214} + q^2 E_{23124} - (q^3 + q) E_{23142} \\ - \tfrac{q^2}{q^2 + 1} E_{12243} - q E_{13242} + q^2 E_{32142} - \tfrac{q^4-q^2}{q^2+1} E_{42213} - q E_{42123} - \tfrac{q^4}{q^2+1}E_{34221}
\end{split}
\end{equation*}
and
\begin{equation*}
\begin{split}
A_9 = (-q^3 - q)F_{121342} -\tfrac{q^4}{(q^2 + 1)^2} F_{223141} + q^2 F_{143122} - q^2 F_{122341} - \tfrac{q^2(q^4+q^2+1)}{(q^2+1)^2} F_{412231} + q^2 F_{241312} \\ - \tfrac{q^3}{q^2+1}F_{131242}   - \tfrac{q^3}{q^2+1} F_{421231} + \tfrac{q^3}{q^2+1}F_{232141} - (q^3 + q) F_{413212} - q^2 F_{312241} + q^2 F_{132412} \\ + \tfrac{q^3}{q^2+1} F_{114232} + q^2 F_{421321} + q^2 F_{123124} + q^2 F_{214231} - (q^3 + q) F_{231421} + (q^4 + q^2 + 1) F_{124321}, \\ 
A_{10} = (-q^3 - q)E_{121342} -\tfrac{q^4}{(q^2 + 1)^2} E_{223141} + q^2 E_{143122} - q^2 E_{122341} - \tfrac{q^2(q^4+q^2+1)}{(q^2+1)^2} E_{412231} + q^2 E_{241312} \\ - \tfrac{q^3}{q^2+1}E_{131242}   - \tfrac{q^3}{q^2+1} E_{421231} + \tfrac{q^3}{q^2+1}E_{232141} - (q^3 + q) E_{413212} - q^2 E_{312241} + q^2 E_{132412} \\ + \tfrac{q^3}{q^2+1} E_{114232} + q^2 E_{421321} + q^2 E_{123124} + q^2 E_{214231} - (q^3 + q) E_{231421} + (q^4 + q^2 + 1) E_{124321}.
\end{split}
\end{equation*}

This element acts as $q^8 + q^4 + q^2 + 2 + q^{-2} + q^{-4} + q^{-8}$ times the identity matrix in the fundamental representation of $\mathfrak{so}_8$.
\end{theorem}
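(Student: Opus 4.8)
The plan is to recognize the displayed element as a quantum Casimir element attached to the fundamental representation, so that centrality is a structural consequence rather than a brute-force computation, and then to pin down the explicit coefficients and the scalar separately. Concretely, I would realize the element $C$ as the output of the general construction of \cite{KuanJPhysA} applied to the $2n$-dimensional fundamental representation $V=\mathbb{C}^{2n}$: using the bilinear pairing $\langle\,,\,\rangle$ between $U_q(\mathfrak{b}-)$ and $U_q(\mathfrak{b}+)$ one forms the quasi-$R$-matrix $\Theta=\sum_{\mu\geq 0}\sum_a F_a^{(\mu)}\otimes E_a^{(\mu)}$, where for each weight $\mu$ the vectors $F_a^{(\mu)}$ and $E_a^{(\mu)}$ run over dual bases of $U_q(\mathfrak{n}^-)_{-\mu}$ and $U_q(\mathfrak{n}^+)_{\mu}$; combining $\Theta_{21}$, the Cartan factor $q^{2H_\rho}$, and $\Theta$, and taking the quantum trace over $V$, produces an element of $U_q(\mathfrak{g})$ built from these dual-basis vectors and of exactly the shape $\sum_a c_a\,F_a\,q^{H(\mu)}\,E_a$ seen above. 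The paired combinations $qE_{21}-E_{12}$, $qF_{12}-F_{21}$, and the boxed $A_i$ are precisely these dual-basis vectors, while the six (resp. eight) ``diagonal'' $q^{(\cdots)}$ terms are the contributions of the six (resp. eight) weights $\pm L_i$ of $V$. Centrality then follows from the quasi-cocommutativity of the universal $R$-matrix together with the fact that $q^{2H_\rho}$ implements the square of the antipode.

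The computational substance, and the main obstacle, is making these dual bases explicit weight by weight. For low weights this is a short linear-algebra problem: e.g.\ in the $(L_1-L_3)$-weight space one writes a general $aE_{12}+bE_{21}$, pairs it against $F_{12},F_{21}$ using $\langle yy',x\rangle=\langle y\otimes y',\Delta(x)\rangle$ together with $\langle F_i,E_j\rangle=-\delta_{ij}(q-q^{-1})^{-1}$, and inverts the resulting $2\times 2$ Gram matrix to recover $qE_{21}-E_{12}$ and its partner. The hard cases are the top weight spaces, where $U_q(\mathfrak{n}^+)_\mu$ is several-dimensional, the Serre relations force genuine dependencies, and the Gram matrix is no longer diagonal; this is exactly where the five-letter words and the denominators $q^2+1$ in $A_5,\dots,A_{10}$ originate for $\mathfrak{so}_8$. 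I would fix a PBW ordering of the root vectors, reduce every monomial to normal form via the Serre relations, and solve the corresponding linear systems. This step is lengthy and best confirmed by computer algebra, but routine in principle.

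Independently of the $R$-matrix derivation, I would verify centrality of the written element directly as a safeguard, cutting the work by symmetry. First, every term of $C$ has total weight $0$ (each $F$-word is balanced by the opposite $E$-word, and the diagonal terms are pure Cartan), so $C$ commutes with every $K_i$ for free. Second, $C$ is invariant under the anti-automorphism $\tau$ determined by $\tau(E_i)=F_i$, $\tau(F_i)=E_i$, $\tau(q^{H_i})=q^{H_i}$: one checks term by term that, for instance, $\tau\big((qF_{12}-F_{21})q^{-H_1-H_3}(qE_{21}-E_{12})\big)$ returns the same term, and likewise for the $A_i$ blocks. Since $\tau$ is an anti-automorphism fixing $C$ and swapping $E_i\leftrightarrow F_i$, the identity $[C,E_i]=0$ forces $[C,F_i]=0$, so it suffices to check $[C,E_i]=0$ for each simple $i$. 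That remaining check is a finite, if bulky, computation in $U_q(\mathfrak{g})$.

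Finally, the scalar action is the easy part once centrality is in hand. Because the fundamental representation is irreducible, Schur's lemma forces $C$ to act by a scalar, which I compute by evaluating $C$ on the highest-weight vector $v_{L_1}=e_1$. Every non-diagonal term has the form $F\cdot q^H\cdot E$ with a nonempty $E$-word whose rightmost factor is some simple $E_i$, and $E_i v_{L_1}=0$, so all such terms annihilate $e_1$; only the diagonal Cartan terms survive. Evaluating their exponents at $H_1=1$ and $H_2=\cdots=H_n=0$ yields $q^{-6}+q^{-2}+2+q^{2}+q^{6}$ for $\mathfrak{so}_6$ and $q^{-8}+q^{-4}+q^{-2}+2+q^{2}+q^{4}+q^{8}$ for $\mathfrak{so}_8$, matching the stated scalars.
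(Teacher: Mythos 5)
Your proposal is correct and takes essentially the same route as the paper: the quasi-$R$-matrix/quantum-trace construction you describe is exactly the content of the paper's Lemma~\ref{main} (cited there from \cite{KuanJPhysA} and \cite{jantzen}), and your plan of computing the dual elements weight-by-weight by inverting Gram matrices of the pairing, with the hard five-letter cases handled by computer algebra, is precisely how the paper proceeds. Your two supplements --- the $\tau$-symmetry reduction for an independent direct check of centrality, and the Schur's-lemma evaluation of the scalar on the highest weight vector (where all non-Cartan terms vanish) --- are sound and make explicit what the paper leaves implicit.
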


\subsubsection{Probabilistic results}

Let us first describe the models. There are two classes of particles, which are called first--class and second--class. At most two particles may occupy a site, and two particles may occupy the same lattice site if they have different class. Each particle (regardless of class) evolves as an ASEP with drift to the right (respectively left) for $0<q<1$ (respectively for $q>1$), and each hole (i.e. absence of a particle) evolves as an ASEP in the opposite direction:

\begin{center}
\begin{tikzpicture}[scale=1]
\node[circle,draw,fill=black] at (-4,0){};
    \node[circle,draw,fill=black] at (-4,0.5){};
    \node[circle,draw,fill=black] at (-3.5,0){};
    \node[circle,draw,fill=white] at (-3.5,0.5){};
    \node[] at (-2.5,0.25){$\longrightarrow$};
    \node[circle,draw,fill=black] at (-1.5,0){};
    \node[circle,draw,fill=white] at (-1.5,.5){};
    \node[circle,draw,fill=black] at (-1,0.5){};
    \node[circle,draw,fill=black] at (-1,0){};
    \node at (1.5,0.25){at rate $q^{-1}\mathcal{R}_{\text{speed}}$};
\node[circle,draw,fill=black] at (5,0){};
    \node[circle,draw,fill=white] at (9-4,0.5){};
    \node[circle,draw,fill=white] at (9-3.5,0){};
    \node[circle,draw,fill=white] at (9-3.5,0.5){};
    \node[] at (9-2.5,0.25){$\longrightarrow$};
    \node[circle,draw,fill=white] at (9-1.5,0.5){};
    \node[circle,draw,fill=white] at (9-1.5,0){};
    \node[circle,draw,fill=white] at (9-1,0.5){};
    \node[circle,draw,fill=black] at (9-1,0){};
    \node at (1.5,0.25){at rate $q^{-1}\mathcal{R}_{\text{speed}}$};
    \node at (9+1.5,0.25){at rate $q^{-1}\mathcal{R}_{\text{speed}}$};
\end{tikzpicture}
\end{center}
\begin{center}
\begin{tikzpicture}[scale=1]
\node[circle,draw,fill=black] at (-4,0){};
    \node[circle,draw,fill=white] at (-4,0.5){};
    \node[circle,draw,fill=black] at (-3.5,0){};
    \node[circle,draw,fill=black] at (-3.5,0.5){};
    \node[] at (-2.5,0.25){$\longrightarrow$};
    \node[circle,draw,fill=black] at (-1.5,0){};
    \node[circle,draw,fill=black] at (-1.5,.5){};
    \node[circle,draw,fill=white] at (-1,0.5){};
    \node[circle,draw,fill=black] at (-1,0){};
    \node at (1.5,0.25){at rate $q^{}\mathcal{R}_{\text{speed}}$};
\node[circle,draw,fill=white] at (5,0){};
    \node[circle,draw,fill=white] at (9-4,0.5){};
    \node[circle,draw,fill=black] at (9-3.5,0){};
    \node[circle,draw,fill=white] at (9-3.5,0.5){};
    \node[] at (9-2.5,0.25){$\longrightarrow$};
    \node[circle,draw,fill=white] at (9-1.5,0.5){};
    \node[circle,draw,fill=black] at (9-1.5,0){};
    \node[circle,draw,fill=white] at (9-1,0.5){};
    \node[circle,draw,fill=white] at (9-1,0){};
    \node at (9+1.5,0.25){at rate $q^{}\mathcal{R}_{\text{speed}}$};
\end{tikzpicture}
\end{center}

Non--trivial local interactions occur when only a first-class particle and second--class particle occupy two adjacent sites. The interaction can be described in terms of ``clocks and coins.'' We describe two types of interactions.

The clocks are located at the bonds between adjacent vertex sites. There are five different clocks which specify the update of the particle system. These five clocks are labeled SWAP, STICK--RIGHT, STICK--LEFT, TWIST--RIGHT and TWIST--LEFT. Supposing the clocks are located at the bond between $x$ and $x+1$, these clocks are described as follows:

\begin{itemize}
\item
The clock SWAP swaps the particles at $x$ with the particles at $x+1$. It has rate $\mathcal{R}_{\text{SWAP}}(q)$ if both $x$ and $x+1$ have one particle. It has rate $q^{-2}\mathcal{R}_{\text{SWAP}}(q)$ if site $x$ has two particles and $x+1$ has none, and rate $q^{2}\mathcal{R}_{\text{SWAP}}(q)$ if site $x$ has no particles and $x+1$ has two. 

\item
The clock STICK takes a particle located at $x$ and a particle located at $x+1$ and sticks them together. STICK--RIGHT places them at $x+1$ and STICK--LEFT places them at $x$. STICK--RIGHT has rate $\mathcal{R}_{\text{STICK}}(q^{-1})$ if the first--class particle was initially on the right, and has rate $q^{2\delta}{R}_{\text{STICK}}(q^{-1})$ if the first--class particle was initially on the left. STICK--LEFT has rate $q^{-2\delta}\mathcal{R}_{\text{STICK}}(q) $ if the first--class particle was initially on the right and rate $\mathcal{R}_{\text{STICK}}(q)$ if the first--class particle was initially on the left. 

\item The clock TWIST takes two particles at a site and splits them apart. TWIST--LEFT takes two particles at $x+1$ and places them at $x$ and $x+1$. It has rate $(1+q^{2\delta})\mathcal{R}_{\text{TWIST}}(q)$ and places the first--class particle at $x$ with probability $1/(1+q^{2\delta})$ and the second--class particle at $x$ with probability $q^{2\delta}/(1+q^{2\delta})$. Similarly, TWIST--RIGHT takes two particles at $x$ and places them at $x$ and $x+1$. It has rate $(1+q^{-2\delta})\mathcal{R}_{\text{TWIST}}(q^{-1})$ and again places the first--class particle at $x+1$ with probability $1/(1+q^{2\delta})$ and the second--class particle at $x+1$ with probability $q^{2\delta}/(1+q^{2\delta})$. 
\end{itemize}

Note that for $0<q<1$, the TWIST clocks clearly preference the first--class particles to jump right over the second--class particles. On the other hand, for $0<q<1$ the STICK clocks are faster when the first--class particle is to the right of the second--class particle, so the STICK clocks preference the second--class particles to jump right. If $\delta=0$, then there is no preference between first and second class particles.  

The \textit{type D ASEP} with parameters $(n,\delta)$ is the interacting particle system described above with jump rates
\begin{align*}
\mathcal{R}_{\text{speed}} (q) &= q^{2n-1}+q^{-2n+1}, \\
\mathcal{R}_{\text{SWAP}} (q)&= (q^{n-1}-q^{-n+1})^2,\\
\mathcal{R}_{\text{TWIST}} (q)&= 2q^2 - q^{-2(n-2)}+q^{-2(n-1)} \\
\mathcal{R}_{\text{STICK}} (q)&= q^{2n} - q^{2(n-1)} + 2
\end{align*}
and the parameter $\delta$ occuring in the asymmetry of the STICK/TWIST clocks. 

This paper will consider five models in total, with parameters $(3,0),(3,1),(4,0),(4,1),(4,2)$.






Here is some notation that will be used in the statements of the results about the models. A particle configuration can be written as a function $\eta$ where $\eta: \{1,\ldots,N\}\rightarrow \{\emptyset, 1 , 2, 12\}$ is defined by setting $\eta(x)$ to be the particles located at site $x$. Let $A_1(\eta) \subseteq \{1,\ldots,N\}$ denote the locations of the first--class particles, and similarly define $A_2(\eta)$. These two sets are in general not disjoint.
Let $\cev{N}_x^1(\eta)$ denote the number of first--class particles to the left of $x$. More precisely, $\cev{N}_x^1(\eta)= \left| \{1,\ldots,x-1\} \cap A_1(\eta) \right|.$

The first result concerns reversible measures:
\begin{proposition}\label{rev} For all five processes, the measure on $\eta$ given by 
$$
 G^2(\eta) = \prod_{x \in A_1(\eta)} q^{-2x} \prod_{x\in A_2(\eta)} q^{-2x}
$$
is reversible. 
\end{proposition}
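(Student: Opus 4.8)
The plan is to verify reversibility directly through the detailed balance equations. Writing $c(\eta,\eta')$ for the jump rate from $\eta$ to $\eta'$, reversibility of $G^2$ is equivalent to $G^2(\eta)\,c(\eta,\eta') = G^2(\eta')\,c(\eta',\eta)$ for every ordered pair of configurations. Since every clock acts on a single bond $\{x,x+1\}$ and fixes all other sites, and since $G^2$ is a product over sites (a particle of either class at site $y$ contributes an independent factor $q^{-2y}$), the factors coming from sites outside $\{x,x+1\}$ cancel from both sides. Thus it suffices to check detailed balance locally: for each bond and each pair of local configurations connected by a single clock, I would compare the ratio of forward to backward rates against $G^2(\eta')/G^2(\eta)$, which is always a power of $q^{\pm2}$.

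First I would dispose of the two easy families. For a single particle crossing the bond (the ASEP ``speed'' moves), moving a particle from $x$ to $x+1$ multiplies $G^2$ by $q^{-2}$, while the forward and backward rates are $q^{-1}\mathcal{R}_{\text{speed}}$ and $q\,\mathcal{R}_{\text{speed}}$, whose ratio is exactly $q^{-2}$; the presence of a spectator particle of the other class changes nothing, since it contributes equal factors on both sides. For the SWAP clock, the exchange $(1,2)\leftrightarrow(2,1)$ leaves $G^2$ invariant (both classes carry the same weight) and the rate is symmetric, while $(12,\emptyset)\leftrightarrow(\emptyset,12)$ changes $G^2$ by $q^{\mp4}$ and carries rates $q^{\mp2}\mathcal{R}_{\text{SWAP}}$, so the rate ratio $q^{-2}/q^{2}=q^{-4}$ again matches the measure ratio.

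The substantive case, and the main obstacle, is the STICK/TWIST pair, because these clocks are time-reverses of one another and detailed balance here forces nontrivial identities among the rate functions together with an exact cancellation of the $\delta$-dependent asymmetries. The four local configurations with one first-class and one second-class particle on the bond, namely $(1,2)$, $(2,1)$, $(12,\emptyset)$, $(\emptyset,12)$, are connected by STICK (merging the two particles onto one site) and TWIST (splitting them apart); the reverse of STICK--LEFT is TWIST--RIGHT and the reverse of STICK--RIGHT is TWIST--LEFT. I would check detailed balance on each such edge, using the measure ratios $G^2((12,\emptyset))/G^2((1,2)) = q^{2}$ and $G^2((\emptyset,12))/G^2((1,2)) = q^{-2}$. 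The key point is that the $\delta$-factor attached to a STICK move (depending on whether the first-class particle starts on the left or the right) is cancelled by the matching splitting probability $1/(1+q^{2\delta})$ or $q^{2\delta}/(1+q^{2\delta})$ of the reversing TWIST move, so that the net condition collapses to the two $\delta$-free identities
\[
q^{-2}\,\mathcal{R}_{\text{TWIST}}(q) = \mathcal{R}_{\text{STICK}}(q^{-1}), \qquad q^{2}\,\mathcal{R}_{\text{TWIST}}(q^{-1}) = \mathcal{R}_{\text{STICK}}(q).
\]
These I would verify by direct substitution of $\mathcal{R}_{\text{TWIST}}(q)=2q^2 - q^{-2(n-2)}+q^{-2(n-1)}$ and $\mathcal{R}_{\text{STICK}}(q)=q^{2n}-q^{2(n-1)}+2$, checking that each holds identically in $q$ for every $n$, hence for all five parameter choices.

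Finally, since detailed balance is a statement about each connected pair separately, once every local edge is verified the global reversibility, and thus the proposition, follows. The asymmetry parameter $\delta$ never survives into the final conditions, which is exactly why the single class-symmetric measure $G^2$ is reversible simultaneously for $\delta=0,1,2$. The one place demanding genuine care is the bookkeeping of which STICK rate reverses which edge and which TWIST splitting probability undoes it; mismatching the first-class/second-class placements is precisely where a spurious leftover factor $q^{\pm2\delta}$ would otherwise appear, so I would present the STICK/TWIST verification edge by edge to make the cancellation transparent.
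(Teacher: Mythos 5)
Your strategy---localize to a single bond and check detailed balance clock by clock---is legitimate, and it is genuinely different from the paper's proof: the paper never verifies detailed balance at all. There, reversibility is automatic from the construction in Section~\ref{Second}, where the generator is $L=G^{-1}HG$ with $H$ a \emph{symmetric} matrix built from the central elements of Theorem~\ref{Center} (so that $G^{2}L$ is symmetric by fiat), and the substantive work is algebraic: computing the $N$-site ground state by applying strings of $E_i$'s and $F_i$'s to a vacuum vector (e.g.\ $F_1^KE_1^MF_3^{N-M}E_2^M\vert\Omega_N\rangle$) and showing its entries are proportional to $\prod_{x\in A_1(\eta)}q^{-x}\prod_{x\in A_2(\eta)}q^{-x}$. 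Your reduction to local detailed balance is sound, your treatment of the speed and SWAP moves is correct, and your two $\delta$-free identities $q^{-2}\mathcal{R}_{\mathrm{TWIST}}(q)=\mathcal{R}_{\mathrm{STICK}}(q^{-1})$ and $q^{2}\mathcal{R}_{\mathrm{TWIST}}(q^{-1})=\mathcal{R}_{\mathrm{STICK}}(q)$ do hold identically in $q$ for every $n$.

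However, the step you yourself flag as delicate---the cancellation of the $q^{\pm2\delta}$ factors---\emph{fails on half of the edges} if one uses the clock rates exactly as printed in Section 1, and this is a genuine gap. The cancellation works on the STICK--LEFT/TWIST--RIGHT edges: $(2,1)\to(12,\emptyset)$ has rate $q^{-2\delta}\mathcal{R}_{\mathrm{STICK}}(q)$, and its reverse (TWIST--RIGHT placing the first-class particle at $x+1$) has rate $(1+q^{-2\delta})\mathcal{R}_{\mathrm{TWIST}}(q^{-1})\cdot\frac{1}{1+q^{2\delta}}=q^{-2\delta}\mathcal{R}_{\mathrm{TWIST}}(q^{-1})$, so the $\delta$-factors drop out. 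But on the STICK--RIGHT/TWIST--LEFT edges it does not: $(1,2)\to(\emptyset,12)$ has rate $q^{2\delta}\mathcal{R}_{\mathrm{STICK}}(q^{-1})$, while its reverse (TWIST--LEFT placing the first-class particle at $x$) has, per the printed description, rate $(1+q^{2\delta})\mathcal{R}_{\mathrm{TWIST}}(q)\cdot\frac{1}{1+q^{2\delta}}=\mathcal{R}_{\mathrm{TWIST}}(q)$ with no compensating factor, so detailed balance forces
\begin{equation*}
q^{2\delta}\,\mathcal{R}_{\mathrm{STICK}}(q^{-1})=q^{-2}\,\mathcal{R}_{\mathrm{TWIST}}(q)=\mathcal{R}_{\mathrm{STICK}}(q^{-1}),
\end{equation*}
i.e.\ $\delta=0$; the edge $(2,1)\leftrightarrow(\emptyset,12)$ fails the same way. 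The resolution is that the printed TWIST--LEFT placement probabilities are swapped relative to the processes actually constructed: in the explicit $4\times4$ generators of Section~\ref{Second}, and in the paper's own duality example where $L([\emptyset;12]\to[2;1])=\mathcal{R}_{\mathrm{TWIST}}(q)$, TWIST--LEFT places the \emph{second}-class particle at $x$ with probability $1/(1+q^{2\delta})$ and the first-class particle with probability $q^{2\delta}/(1+q^{2\delta})$. With that correction, each STICK move and its reversing TWIST move carry the same power of $q^{2\delta}$, and your computation closes. So to complete the proof you must either run the verification against the explicit generator matrices or first reconcile the clock table with them; as written, the central cancellation claim is false for the stated rates.
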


\begin{remark}
Since the reversible measures are invariant under switching $A_1(\eta),A_2(\eta)$, this can be interpreted as the statement that the particle system does not preference first--class particles over second--class particles. In a sense, this means that the STICK and TWIST clocks cancel each other out. For $\delta\neq 0$ this is not immediately clear.
\end{remark}

\begin{theorem}\label{dual}
The type $D$ ASEP with parameters $(2,1)$ or $(3,1)$ or $(3,2)$ is self--dual with respect to the function
$$
D(\eta,\xi) = 1_{\{ A_2(\xi)=A_2(\eta), A_1(\xi) \subseteq A_1(\eta)\}} \prod_{x \in A_1(\xi)} q^{2x - 2\cev{N}_x^1(\eta)} \prod_{x \in A_2(\xi)} q^{2x}.
$$
\end{theorem}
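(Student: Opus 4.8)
The plan is to deduce the self--duality from the quantum group symmetry of the generator together with the reversible measure of Proposition~\ref{rev}, following the scheme of \cite{CGRS} and \cite{KuanJPhysA}. Write $V=\mathbb{C}^{2n}$ for the fundamental representation and realize the process on $V^{\otimes N}$ through a symmetric operator $\mathbb{L}=\sum_x c_{x,x+1}$, where $c$ is the two--site operator obtained by letting the central element $C$ of Theorem~\ref{Center} act on $V\otimes V$ via the coproduct. The first point to record is that $\mathbb{L}$ commutes with the global action $\Delta^{(N)}(u)$ for every $u\in U_q(\mathfrak{so}_{2n})$: since $C$ is central, $\Delta(C)$ commutes with $\Delta(u)$, and coassociativity promotes this to the commutation of each local term $c_{x,x+1}$ with $\Delta^{(N)}(u)$. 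The stochastic generator $L$ is a diagonal ground--state conjugation of $\mathbb{L}$ by $G$, under which the symmetries become $\Gamma^{-1}\Delta^{(N)}(u)\Gamma$; I will keep track of these conjugating factors, which are exactly the $G^{\pm 2}$ weights appearing in the statement.

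Second, I would invoke the elementary ``symmetry implies duality'' principle. Reversibility of $G^2$ (Proposition~\ref{rev}) is detailed balance, i.e. $L^{T}=\mathrm{diag}(G^{2})\,L\,\mathrm{diag}(G^{-2})$, so the diagonal function $D_0(\eta,\xi)=1_{\{\eta=\xi\}}G^{-2}(\eta)$ already gives the trivial self--duality $LD_0=D_0L^{T}$. If $S$ commutes with $L$, then $D:=D_0S^{T}$ satisfies
\[
    LD=LD_0S^{T}=D_0L^{T}S^{T}=D_0(SL)^{T}=D_0(LS)^{T}=D_0S^{T}L^{T}=DL^{T},
\]
so $D$ is again a self--duality function, with $D(\eta,\xi)=G^{-2}(\eta)\,S(\xi,\eta)$. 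It therefore remains to produce the correct $S$ from the symmetry algebra and to check that $D_0S^{T}$ equals the stated function.

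Third, I would identify $S$. The indicator $A_2(\xi)=A_2(\eta)$ shows that $S$ fixes the second--class particles, while $A_1(\xi)\subseteq A_1(\eta)$ together with the weights $q^{2x-2\cev{N}_x^1(\eta)}$ is the signature of a nilpotent, annihilation--type operator acting only in the first--class direction. Concretely I expect $S$ to be a $q$--exponential of the single symmetry $\Gamma^{-1}\Delta^{(N)}(F_\beta)\Gamma$, where $F_\beta$ is the root vector of the fundamental representation that removes one first--class particle; any power or series in this one symmetry still commutes with $L$, so the commutation hypothesis of the previous step holds automatically. Unwinding $D=D_0S^{T}$ on its support then reduces the theorem to a matrix identity for $S$ in the occupation basis, in which the running counts $\cev{N}_x^1(\eta)$ are produced by the $q^{\pm H}$--tails of the iterated coproduct (the relevant $H$ measuring first--class occupation), and the $q^{2x}$ factors come from the reversible weight $G^{-2}$.

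The step I expect to be the main obstacle is this explicit matching: confirming that $D_0S^{T}$ equals the stated closed form. This means tracking the counts $\cev{N}_x^1(\eta)$ through the iterated coproduct and checking the telescoping that assembles the two--site weights into the global product, and it is also the point at which the restriction to the specific parameter pairs in the statement must be used. A secondary difficulty is that the construction discards the ``negative probability'' states, so I must verify that $LD=DL^{T}$ closes on the physical per--site space $\{\emptyset,1,2,12\}$ without leakage into the discarded sectors of $V\otimes V$. To finish I would reduce the global identity to a single two--site identity --- legitimate because $L$ is a sum of nearest--neighbor terms and $D$ factorizes with a telescoping of the $\cev{N}$--counts --- and verify it clock by clock (SWAP, STICK--RIGHT/LEFT, TWIST--RIGHT/LEFT, and the single--particle hops) for each parameter value in the statement, a finite check.
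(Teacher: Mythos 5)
Your proposal is correct and follows essentially the same route as the paper: the CGRS ``symmetry plus reversibility implies duality'' framework, with the ground--state conjugation supplying the $q^{2x}$ weights and with $S$ taken to be the $q$--exponential of the iterated coproduct of the first--class lowering operator ($F_1$ for $(2,1)$ and $(3,2)$, $F_2$ for $(3,1)$), exactly as in the paper. The ``main obstacle'' you flag is resolved in the paper not by a clock--by--clock two--site check but by invoking Proposition 5.1 of \cite{CGRS} to factor $\exp_{q^2}(\Delta^{(N-1)}(F_1))$ into the product $F_1^{(1)}\cdots F_1^{(N)}$ with $F_1^{(j)} = 1^{\otimes j-1}\otimes F_1 \otimes (K_1^{-1})^{\otimes N-j}$, after which the $K_1^{-1}$ tails produce the $q^{-2\cev{N}_x^1(\eta)}$ factors by direct inspection of their action on the four physical vectors; your leakage worry is also handled automatically there, since $F_1$ maps physical vectors to physical vectors.
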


\begin{remark}
When $A_2(\xi)=A_2(\eta)=\emptyset$, the particle system is the usual ASEP, and the duality is Sch\"{u}tz's duality function \cite{Sch97}.
\end{remark}

\begin{example}
Suppose $N=2$ and write a particle system as $\eta = [\eta(1);\eta(2)]$. Then
\begin{align*}
LD([12;1],[2;1]) &= L( [12;1], [12;1] ) D( [12;1],[2;1]) + L( [12;1], [1;12]) D([1;12], [2;1])\\
&= -q^{-1} \mathcal{R}_{\mathrm{speed}}q^{4-2}q^2 +0 
\end{align*}
and
\begin{align*}
DL^T([12;1],[2;1]) &= D( [12;1],[2;1]) L([2;1],[2;1]) + D( [12;1], [12;\emptyset])L ( [2;1],[12;\emptyset])\\
&=q^{4-2}q^2 \left( -\mathcal{R}_{\mathrm{SWAP}} - q^{-2\delta}\mathcal{R}_{\mathrm{STICK}}(q) - \mathcal{R}_{\mathrm{STICK}}(q^{-1})\right) + q^4 (q^{-2\delta} \mathcal{R}_{\mathrm{STICK}}(q)).
\end{align*}
In order for duality to hold, we must have
$$
q^{-1} \mathcal{R}_{\mathrm{speed}} = \mathcal{R}_{\mathrm{SWAP}} + \mathcal{R}_{\mathrm{STICK}}(q^{-1}),
$$
which holds for the given jump rates. 
\end{example}

\begin{example}
Similarly,
$$
LD( [12;1],[\emptyset;12]) =  q^{-1}\mathcal{R}_{\mathrm{speed}}q^6
$$
and
\begin{align*}
DL^T( [12;1],[\emptyset;12]) &= D( [12;1], [2;1]) L( [\emptyset;12], [2;1]) + D([12;1],[12,\emptyset])L([\emptyset;12],[12;\emptyset]) \\
&=q^4 \mathcal{R}_{\mathrm{TWIST}}(q) + q^4 \cdot q^2 \mathcal{R}_{\mathrm{SWAP}}(q),
\end{align*}
so duality requires
$$
q \mathcal{R}_{\mathrm{speed}} = \mathcal{R}_{\mathrm{TWIST}}(q) +q ^2 \mathcal{R}_{\mathrm{SWAP}} ,
$$
which holds for the given jump rates. 
\end{example}


\begin{example}
Likewise
$$
LD( [12;\emptyset], [2;1]) = (1+q^{-2\delta})\mathcal{R}_{\mathrm{TWIST}}(q^{-1}) \cdot \frac{1}{1+q^{2\delta}}q^6 
$$
and
$$
DL^T( [12;\emptyset], [2;1]) = q^4 \cdot q^{-2\delta} \mathcal{R}_{\mathrm{STICK}}(q),
$$
which requires
$$
q^2 \mathcal{R}_{\mathrm{TWIST}}(q^{-1})  =  \mathcal{R}_{\mathrm{STICK}}(q)
$$
\end{example}

\begin{remark}
The last three examples show that the five variables $\mathcal{R}_{\mathrm{speed}}, \mathcal{R}_{\mathrm{SWAP}}, \mathcal{R}_{\mathrm{STICK}}, \mathcal{R}_{\mathrm{TWIST}},\delta$ must satisfy at least three equations. Thus the set of solutions has at most two parameters. A natural conjecture is that for all $(n,\delta)$ the type $D$ ASEP with parameters $(n,\delta)$ is self--dual with respect to the duality function in Theorem \ref{dual}. 
\end{remark}

\begin{theorem}\label{Thm2}
The type D ASEP with parameters $(2,0)$ or $(3,0)$ is self--dual with respect to the function
$$
D(\eta,\xi) = 1_{\{ A_2(\xi)\subseteq A_2(\eta), A_1(\xi) \subseteq A_1(\eta)\}} \prod_{x \in A_1(\xi)} q^{2x - 2\cev{N}_x^1(\eta)} \prod_{x \in A_2(\xi)} q^{2x - 2\cev{N}_x^2(\eta)}.
$$
\end{theorem}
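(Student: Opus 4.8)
The plan is to verify the self-duality relation $L D = D L^T$ directly, taking advantage of the locality of the generator. Since every clock acts on a single bond, the generator splits as $L = \sum_x L_{x,x+1}$, where $L_{x,x+1}$ affects only the content of sites $x$ and $x+1$. Writing the duality equation at a fixed pair $(\eta,\xi)$ as
\[
    \sum_{\eta'} L(\eta,\eta') D(\eta',\xi) = \sum_{\xi'} D(\eta,\xi') L(\xi,\xi'),
\]
both sides decompose into contributions indexed by bonds, so it suffices to establish a local identity at each bond $(x,x+1)$ with the configuration elsewhere held fixed.

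The reduction to a genuinely finite problem relies on the observation that each clock (speed, SWAP, STICK, TWIST) conserves both the number of first-class and the number of second-class particles on $\{x,x+1\}$. Hence, in the factor $\prod_{y \in A_1(\xi)} q^{2y - 2\cev{N}_y^1(\eta)}$ of $D$, a bond-$(x,x+1)$ transition of $\eta$ can alter $\cev{N}_y^1(\eta)$ only for $y = x+1$ --- through the first-class content at site $x$ --- and leaves every $\cev{N}_y^1(\eta)$ with $y \ge x+2$ (and trivially $y \le x$) unchanged; the identical statement holds for the second-class counts. Consequently all global factors of $D$ away from the bond cancel between the two sides, and the local identity reduces to a scalar relation in $q$ involving only the local occupation at $x,x+1$, the prescribed rates, and the powers of $q$ generated by the changes in $\cev{N}^1_{x+1}(\eta)$ and $\cev{N}^2_{x+1}(\eta)$.

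With this reduction, the proof is a finite case analysis over the local configurations in $\{\emptyset,1,2,12\}^2$, in exactly the spirit of the three worked Examples following Theorem \ref{dual}. The key structural feature is that at $\delta = 0$ the STICK and TWIST rates are symmetric under interchange of the two particle classes; this is precisely what allows the symmetric duality function here, with a containment constraint and a $\cev{N}$-correction for both classes, in contrast to the frozen second class of Theorem \ref{dual}. For each local configuration I would enumerate the outgoing transitions of $\eta$ (producing the left-hand side) and of $\xi$ (producing the right-hand side), insert the explicit rates $\mathcal{R}_{\mathrm{speed}},\mathcal{R}_{\mathrm{SWAP}},\mathcal{R}_{\mathrm{STICK}},\mathcal{R}_{\mathrm{TWIST}}$ at $n=2$ and $n=3$, and check that the two sides agree.

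The main obstacle is bookkeeping rather than conceptual. One must carefully respect the indicator $1_{\{A_2(\xi)\subseteq A_2(\eta),\,A_1(\xi)\subseteq A_1(\eta)\}}$: transitions that would carry $\eta'$ or $\xi'$ out of the support of $D$ contribute nothing and must be dropped, and this gating interacts nontrivially with the merge and split moves. One must also pin down the exact power of $q$ contributed by a change in $\cev{N}^1_{x+1}(\eta)$ or $\cev{N}^2_{x+1}(\eta)$ when a particle crosses the bond or when two particles merge or split. The STICK and TWIST cases, which involve doubly-occupied sites, are the most delicate; as a consistency check, the reversibility and $A_1 \leftrightarrow A_2$ symmetry of the measure $G^2$ from Proposition \ref{rev} constrain the relations among rates that must hold. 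Assembling the verified local identities over all bonds yields $L D = D L^T$, completing the proof.
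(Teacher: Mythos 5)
Your locality reduction is correct, and your route is genuinely different from the paper's. You are right that every clock conserves the number of particles of each class on the bond $\{x,x+1\}$, so $\cev{N}^1_y(\eta)$ and $\cev{N}^2_y(\eta)$ can only change at $y=x+1$, the off-bond factors of $D$ cancel, the indicator factorizes over sites, and the problem collapses to a finite family of identities in $q$, one per pair of local configurations; aiming for the stronger bond-wise identity $L_{x,x+1}D = DL_{x,x+1}^T$ is also legitimate (it does hold here, and in fact is implicit in the paper's construction, since the symmetry restricted to a bond acts through a coproduct and hence commutes with each local Hamiltonian). The paper, by contrast, never verifies the generator identity directly: it builds the process and the duality simultaneously from the quantum group, taking $L = G^{-1}HG$ with $H$ the action of $\Delta(C)$ for the central element $C$ of Theorem \ref{Center}, and obtaining $D = G^{-1}SG^{-1}$ from a symmetry $S$ commuting with $H$ --- for $(2,0)$, $S(\eta,\xi) = \langle \xi \vert \exp_{q^2}(\Delta^{(N-1)}(F_2))\exp_{q^2}(\Delta^{(N-1)}(F_3))\vert \eta\rangle$, and for $(3,0)$ the same with $F_3,F_4$ in the $\mathfrak{so}_8$ picture. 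There the duality relation is automatic from $HS=SH$ (the framework of \cite{CGRS}), and the only work is evaluating matrix elements of $S$, which is exactly where the factors $q^{2x-2\cev{N}^1_x(\eta)}$ and $q^{2x-2\cev{N}^2_x(\eta)}$ come from. Your approach buys elementarity (no quantum group input, only the rates); the paper's buys a structural explanation, in particular of why both classes can carry the containment-plus-correction form precisely in the symmetric case.

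The genuine gap is that your proposal defers the entire substantive content of the theorem. The statement is not that some duality function of this shape exists; it is that the specific rates $\mathcal{R}_{\mathrm{speed}}, \mathcal{R}_{\mathrm{SWAP}}, \mathcal{R}_{\mathrm{STICK}}, \mathcal{R}_{\mathrm{TWIST}}$ at $(n,\delta)=(2,0)$ and $(3,0)$ satisfy the local identities your reduction produces, and you do not verify a single one of them. The points you yourself flag as delicate --- the gating by the indicator under STICK/TWIST moves, and the exact powers of $q$ created when a particle crosses the bond or two particles merge or split --- are precisely where such identities live or die: the paper's example following Theorem \ref{Thm2} shows that one of these local relations, $q\mathcal{R}_{\mathrm{speed}} = \mathcal{R}_{\mathrm{SWAP}} + q^{-2\delta}\mathcal{R}_{\mathrm{STICK}}(q)$, holds only when $\delta = 0$, so the outcome of the case check is genuinely sensitive to the rate table and cannot be waved through. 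Until the full set of local cases (pairs of configurations in $\{\emptyset,1,2,12\}^2$ compatible with the indicator, including the diagonal escape-rate terms) is actually enumerated and checked for both parameter choices --- in the style of the worked examples after Theorem \ref{dual}, or by computer --- what you have is a correct reduction and a plan, not a proof.
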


\begin{example}
Letting $D$ be the function in Theorem \ref{Thm2}, then
$$
LD([12;2],[2;1]) = q^{-1}\mathcal{R}_{\mathrm{speed}} \cdot q^6
$$
and
\begin{align*}
DL^T([12;2],[2;1]) &= D([12;2],[12;\emptyset][)L([2;1],[12;\emptyset]) + D([12;2],[1;2])L([1;2],[12,\emptyset])\\
&= q^4 \cdot \mathcal{R}_{\mathrm{SWAP}} + q^4 q^{-2\delta} \mathcal{R}_{\mathrm{STICK}}(q),
\end{align*}
which requires
$$
q \mathcal{R}_{\mathrm{speed}} = \mathcal{R}{_\mathrm{SWAP}} + q^{-2\delta} \mathcal{R}_{\mathrm{STICK}}(q),
$$
This only holds for $\delta=0$, demonstrating that the Theorem is false for $(n,\delta)$ if $\delta\neq 0$.
\end{example}

\section{Proofs}\label{Second}

\subsection{Proof of Theorem \ref{Center}}
The starting point of the the proof is the following lemma, which was stated in \cite{KuanJPhysA} and was based on \cite{jantzen}.

\begin{lemma}\label{main}
For each weight $\mu$ of the fundamental representation of a Lie algebra $\mathfrak{g}$, let $v_{\mu}$ be a vector in the weight space. Suppose $q$ is not a root of unity, and $2\mu$ is always in the root lattice of $\mathfrak{g}$. Let $e_{\mu\lambda}$ and $f_{\lambda\mu}$ be products of $E_i$s and $F_i$s in $U_q(\mathfrak{g})$, respectively, such that $e_{\mu\lambda}$ sends $v_{\lambda}$ to $v_{\mu}$ and $f_{\lambda\mu}$ sends $v_{\mu}$ to $v_{\lambda}$. If $e^*_{\mu\lambda}$ and $f^*_{\mu\lambda}$ are the corresponding dual elements under the above pairing, and $\rho$ is half the sum of the positive roots of $\mathfrak{g}$, then
\[
    \sum_{\mu} q^{(-2\rho, \mu)} q^{H_{-2\mu}} + \sum_{\mu > \lambda} q^{(\mu - \lambda, \mu)}q^{(-2\rho, \mu)} e^*_{\mu\lambda} q^{H_{-\mu-\lambda}} f^*_{\lambda \mu}
\]
is a central element of the quantum group $U_q(\mathfrak{g})$.
\end{lemma}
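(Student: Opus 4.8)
The plan is to recognize the displayed element as a specialization of the standard quantum Casimir construction and then to establish centrality by checking that it commutes with each of the generators $E_i$, $F_i$, and $K_i = q^{H_i}$. The first reduction is a weight count: the element $e^*_{\mu\lambda}$ raises weight by $\mu-\lambda$, the element $f^*_{\lambda\mu}$ lowers weight by the same amount, and $q^{H_{-\mu-\lambda}}$ is weight-neutral, so each product $e^*_{\mu\lambda}q^{H_{-\mu-\lambda}}f^*_{\lambda\mu}$ lies in the weight-zero subspace. Hence the whole expression is weight-zero and therefore commutes with every $q^{H_i}$ automatically. This is also the point at which the hypothesis ``$2\mu$ is in the root lattice'' is used: it guarantees that $q^{H_{-2\mu}}$ and $q^{H_{-\mu-\lambda}}$ are genuine elements of the subalgebra generated by the $K_i$, so the formula is well defined in $U_q(\mathfrak{g})$. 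Using the algebra anti-automorphism that exchanges $E_i \leftrightarrow F_i$ and inverts the $K_i$, it then suffices to prove that the element commutes with each $E_i$.

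The conceptual engine behind centrality is the intertwining property of the quasitriangular structure. The pairing $\langle\,,\,\rangle$ between $U_q(\mathfrak{b}-)$ and $U_q(\mathfrak{b}+)$ produces dual bases of the weight subspaces of $U_q(\mathfrak{n}^+)$ and $U_q(\mathfrak{n}^-)$, from which the quasi-$R$-matrix $\Theta$ is assembled; the full universal $R$-matrix then satisfies $R\,\Delta(x) = \Delta^{\mathrm{op}}(x)\,R$ for all $x$, where $\Delta^{\mathrm{op}}$ is the opposite coproduct. Combining this with the analogous relation for $R_{21}$ gives $R_{21}R\,\Delta(x) = \Delta(x)\,R_{21}R$, i.e. $R_{21}R$ commutes with the image of the coproduct. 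I claim the displayed element is exactly the partial \emph{quantum trace} $\bigl(\mathrm{id}\otimes \Tr_V\bigl(\pi(K_{2\rho})\,\cdot\,\bigr)\bigr)(R_{21}R)$ over the fundamental representation $V$: the first sum over weights $\mu$ is the diagonal part, and the second sum over pairs $\mu>\lambda$ realizes the off-diagonal part of $R_{21}R$ through the dual-basis elements $e^*_{\mu\lambda}, f^*_{\lambda\mu}$ acting between the weight vectors $v_\lambda$ and $v_\mu$. The factor $q^{(-2\rho,\mu)}$ encodes the $K_{2\rho}$-twist, while $q^{(\mu-\lambda,\mu)}$ together with $q^{H_{-\mu-\lambda}}$ records the diagonal $K$-contributions of $R_{21}$ and $R$ evaluated on $v_\mu$ and $v_\lambda$.

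Granting this identification, centrality follows abstractly: applying the twisted trace $\Tr_V(\pi(K_{2\rho})\,\cdot\,)$ to one tensor factor of the relation $R_{21}R\,\Delta(E_i) = \Delta(E_i)\,R_{21}R$, and using that this trace is cyclic because $S^2 = \mathrm{Ad}(K_{2\rho})$ holds in $U_q(\mathfrak{g})$, collapses the two-sided commutation into $C E_i = E_i C$. For a self-contained verification I would instead compute $[E_i, C]$ by hand using Jantzen's skew-derivations $r_i, r_i'$, which satisfy a relation of the form $[E_i, u] = (q-q^{-1})^{-1}\bigl(r_i'(u)K_i - K_i^{-1} r_i(u)\bigr)$ for $u \in U_q(\mathfrak{n}^-)$ and are adjoint to multiplication by $F_i$ under the pairing. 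Substituting $u = f^*_{\lambda\mu}$, commuting $E_i$ through the diagonal $q^{H_{-\mu-\lambda}}$ (which is precisely what produces the recorded powers of $q$), and re-summing, the contributions indexed by adjacent weights telescope and cancel.

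The main obstacle is the bookkeeping in this last step, namely verifying that the weight prefactors $q^{(\mu-\lambda,\mu)}q^{(-2\rho,\mu)}$ are exactly those making the telescoping exact, and that the elements $e^*_{\mu\lambda}, f^*_{\lambda\mu}$ — which are not single generators but products indexed by weight-lowering paths through $V$ — transform correctly under $r_i$ and $r_i'$. On the $R$-matrix side the equivalent obstacle is to justify that the quasi-$R$-matrix, \emph{a priori} an infinite sum over the whole root lattice, truncates to precisely the finite set of pairs $\mu > \lambda$ occurring in the fundamental representation once the second factor is evaluated in $V$, and that the dual bases so obtained coincide with the $e^*_{\mu\lambda}, f^*_{\lambda\mu}$ of the statement. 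Once these matchings are confirmed, the stated element is central, and the final assertion that it acts as a scalar in the fundamental representation is immediate from Schur's lemma applied to the irreducible module $V$.
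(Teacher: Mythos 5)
The paper never proves this lemma itself --- it is stated as a quotation from \cite{KuanJPhysA}, with the underlying theory attributed to Chapter 6 of \cite{jantzen} --- and the argument in that source is precisely the one you describe: the element is the $K_{2\rho}$-twisted partial quantum trace over the fundamental representation of $R_{21}R$, built from the dual bases of the pairing via the quasi-$R$-matrix, and it is central because $R_{21}R$ commutes with the image of $\Delta$ while the twisted trace is cyclic thanks to $S^2 = \mathrm{Ad}(K_{2\rho})$. So your proposal matches the (cited) proof in approach; the only step I would flag is the reduction to commutation with the $E_i$ alone, since the anti-automorphism exchanging $E_i \leftrightarrow F_i$ and inverting the $K_i$ fixes the displayed element only if it also inverts $q$ (Jantzen's $\Omega$), but this reduction is anyway superfluous once the $R$-matrix identification is made, because that argument yields commutation with all generators at once.
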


\subsubsection{Computing the individual terms}

This section describes each of the components of \cref{main} and efforts towards computing them explicitly. All results from here concern only $\mathfrak{so}_{2n}$.

First of all, we will take $q$ to be a real number different from $1$, so $q$ is not a root of unity. In addition, $2\mu = \pm 2 L_i$ is always in the root lattice, because $L_i \pm L_j$ are always roots. Thus, the conditions of the lemma are satisfied.

Our next step is to establish an ordering for the weight spaces to discern when $\mu > \lambda$ holds. Since the $E_i$ and $F_i$ operators serve as raising and lowering operators, and the sum of the positive roots is 
\[
    \sum_{i \ne j} (L_i + L_j) + \sum_{i<j} (L_i - L_j) = (2n-2) L_1 + (2n-4) L_2 + \cdots + L_{n-1},
\]
a natural ordering of the weights is 
\[
    L_1 > \cdots > L_{n-1} > L_n = -L_n > -L_{n-1} > \cdots > -L_1.
\]

\begin{lemma}\label{dbldyn}
The weight $\mu$ can be reached from a weight $\lambda$ by a product of $E_i$s, and the weight $\lambda$ can be reached from $\mu$ by a product of $F_i$s, if and only if $\mu > \lambda$ under the above ordering.
\end{lemma}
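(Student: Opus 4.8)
The plan is to reduce the statement to a purely combinatorial claim about a directed graph on the $2n$ weights of the fundamental representation, exploiting the fact that all of its weight spaces are one-dimensional. First I would record the explicit action of the simple raising operators on the weight basis $v_{\pm L_i}$, read off directly from the matrices $E_i, F_i$ listed above: for $1 \le i \le n-1$ the operator $E_i$ sends $v_{L_{i+1}} \mapsto v_{L_i}$ and $v_{-L_i} \mapsto -v_{-L_{i+1}}$ (each shifting the weight by $\alpha_i = L_i - L_{i+1}$), while $E_n$ sends $v_{-L_n} \mapsto v_{L_{n-1}}$ and $v_{-L_{n-1}} \mapsto -v_{L_n}$ (shifting by $\alpha_n = L_{n-1}+L_n$). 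Drawing an edge $\lambda \to \mu$ whenever some $E_i$ carries $v_\lambda$ to a nonzero multiple of $v_\mu$, this produces a graph consisting of the descending chain $L_n \to L_{n-1} \to \cdots \to L_1$, the ascending chain $-L_1 \to -L_2 \to \cdots \to -L_n$, and the two ``crossing'' edges $-L_{n-1} \to L_n$ and $-L_n \to L_{n-1}$ arising from $E_n$.

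Because each weight space is one-dimensional and each $E_i$ either annihilates $v_\lambda$ or sends it to a nonzero multiple of $v_{\lambda + \alpha_i}$, a product of $E_i$s carries $v_\lambda$ to a nonzero multiple of $v_\mu$ exactly when there is a directed path from $\lambda$ to $\mu$ in this graph. Moreover, since the representation is minuscule (equivalently, one reads off directly from the matrices that each $F_i$ reverses the edge induced by $E_i$), the $F$-graph is precisely the reverse of the $E$-graph. Hence $\lambda$ is reachable from $\mu$ by a product of $F$s if and only if $\mu$ is reachable from $\lambda$ by a product of $E$s, the two hypotheses of the lemma are equivalent, and it suffices to show that reachability in the $E$-graph coincides with the strict order $\mu > \lambda$.

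The remaining step is to compute the transitive closure of the graph and compare it with the stated order $L_1 > \cdots > L_{n-1} > L_n = -L_n > -L_{n-1} > \cdots > -L_1$. I would argue case by case on the source weight: from $L_k$ one reaches exactly $L_{k-1}, \ldots, L_1$; from $-L_k$ with $k \le n-1$ one follows the ascending chain to $-L_{n-1}$ and then, via $E_{n-1}$ or $E_n$, reaches everything in $\{-L_{k+1}, \ldots, -L_n, L_n, L_{n-1}, \ldots, L_1\}$; and from either of $\pm L_n$ one reaches exactly $L_{n-1}, \ldots, L_1$. In each case the reachable set is precisely the set of weights lying strictly above the source in the stated order, which establishes the equivalence.

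The one delicate point — and the reason the ordering is written with $L_n = -L_n$ — is the pair $\{L_n, -L_n\}$: I would check that $L_n$ is not reachable from $-L_n$ nor $-L_n$ from $L_n$, since the only edges leaving these two vertices both feed into the common tail $L_{n-1} \to \cdots \to L_1$. This is consistent with $2L_n = \alpha_n - \alpha_{n-1}$ being neither a nonnegative nor a nonpositive combination of simple roots, so that $L_n$ and $-L_n$ are genuinely incomparable. Getting this tie right, rather than any single routine reachability computation, is the main thing to be careful about.
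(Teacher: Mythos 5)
Your proposal is correct and follows essentially the same route as the paper: the directed graph you build (the two chains joined by the crossing edges $-L_n \to L_{n-1}$ and $-L_{n-1} \to L_n$) is exactly the paper's schematic diagram of the weight spaces, and the reachability check is the paper's ``verification by inspection,'' with the $F$ case handled by reversing all edges. Your write-up merely makes explicit some points the paper leaves implicit -- the use of one-dimensional weight spaces to equate nonvanishing products of $E_i$s with directed paths, and the incomparability of $L_n$ and $-L_n$ -- which is a welcome but not essentially different elaboration.
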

\begin{proof}
Define $v_1, \cdots, v_{2n}$ to be the vectors in the weight spaces $L_1, \cdots, L_n, -L_n, \cdots, -L_1$ with only a single nonzero entry of $1$. (Specifically, $v_i$ has a $1$ in the $i$th spot for $1 \le i \le n$ and a $1$ in the $(3n + 1 - i)$th spot for $n+1 \le i \le 2n$.) The action of the $E_i$s and $F_i$s can be described explicitly based on their matrix counterparts in the fundamental representation, defined earlier: for all $1 \le i \le n-1$, 
\[
    E_i v_j = \begin{cases} v_i & j = i+1 \\ -v_{2n-i} & j = 2n+1-i \\ 0 & \text{otherwise,} \end{cases}
\]
meaning that the first $(n-1)$ $E$ operators ``move us up one weight space,'' and
\[
    E_n v_j = \begin{cases} v_{n-1} & j = n+1 \\ -v_n & j = n + 2 \\ 0 & \text{otherwise.} \end{cases}
\]
Below is a schematic diagram for the weight spaces, which resembles the Dynkin diagram $D_n$ mirrored over itself:
\begin{center}
\begin{tikzpicture}[scale=1.25, font = \footnotesize]
\node[circle, draw] (d) at (0, 0.5) {$v_n$};
\node[circle, draw] (e) at (0, -0.5) {$v_{n+1}$};
\node[circle, draw] (f) at (1.5, 0) {$v_{n+2}$};
\node[circle, draw] (c) at (-1.5, 0) {$v_{n-1}$};
\node[circle, draw] (g) at (3.75, 0) {$v_{2n-1}$};
\node[circle, draw] (b) at (-3.75, 0) {$v_{2}$};
\node[circle, draw] (h) at (5.25, 0) {$v_{2n}$};
\node[circle, draw] (a) at (-5.25, 0) {$v_{1}$};
\draw[<-] (a) -- (b);
\draw[<-] (g) -- (h);
\draw[<-] (c) -- (d);
\draw[<-] (c) -- (e);
\draw[<-] (d) -- (f);
\draw[<-] (e) -- (f);
\node at (-2.625, 0) {$\cdots$};
\node at (2.625, 0) {$\cdots$};
\node at (-4.5, 0.2) {$E_1$};
\node at (4.55, 0.2) {$-E_1$};
\node at (-0.8, 0.5) {$E_{n-1}$};
\node at (-0.8, -0.5) {$E_{n}$};
\node at (0.8, 0.5) {$-E_{n}$};
\node at (0.9, -0.5) {$-E_{n-1}$};
\end{tikzpicture}
\end{center}

The result can now be directly verified for the $E_i$s by inspection of the diagram (the ordering goes left-to-right, and the $E_i$s always move us left in the diagram). The proof for $F_i$s follows similarly.
\end{proof}

Since $E_{n-1}$ and $E_n$ commute, and so do $F_{n-1}$ and $F_n$, taking the ``top path'' or the ``bottom path'' in the diagram above in fact corresponds to applying the same element of $U(\mathfrak{g})$. This means that there is in fact a \textbf{unique} product of $E$s or $F$s (up to scalars) that takes any $v_i$ to any $v_j$ (whenever $j$ can be reached from $i$), and this means that we have $\binom{2n}{2} - 1$ total pairs $(\mu, \lambda)$ in the second sum of \cref{main}. 

Now that we know how to find the elements $e_{\mu\lambda}$ and $f_{\lambda \mu}$, we can begin to calculate the quantities in the actual sum. Computing $q^{(\mu - \lambda, \mu)}$ and $q^{(-2\rho, \mu)}$ is relatively simple, because $\mu$ always consists of a single term of the form $\pm L_i$:
\[
    q^{(\mu - \lambda, \mu)} = q^{(\mu, \mu) - (\lambda, \mu)} = \begin{cases} q^2 & \lambda = -\mu \\ 1 & \lambda = \mu \\ q & \text{otherwise,} \end{cases} \quad q^{(-2\rho, \mu)} = \begin{cases} q^{2n - 2i} & \mu = L_i \\ q^{2i - 2n} & \mu = -L_i. \end{cases}
\]
Computing the others requires more casework. To find $q^{H_{-2\mu}}$ and $q^{H_{-\mu-\lambda}}$, we need to write the exponents as linear integer combinations of the $H_i$s. We present the general argument here and show examples later in the paper.

\begin{itemize}
\item To compute $q^{H_{-2\mu}}$ when $\mu = \pm L_i$, we first find that (with some abuse of notation)
\[
    \boxed{H_{-2L_n}} = H_{-(L_{n-1} - L_n) + (L_{n-1} + L_n)} = \boxed{H_{n-1} - H_{n}},
\]
and then use a telescoping sum: since
\[
    H_{-2L_i} - H_{-2L_n} = \sum_{j = i}^{n-1} (-2L_j + 2L_{j+1}) = -2\sum_{j=i}^{n-1} H_j,
\]
we can rearrange and substitute to
\[
    \boxed{H_{-2L_i} = H_{n-1} - H_n -2 \sum_{j=i}^{n-1} H_j}
\]
for any $1 \le i \le n-1$. This gives us $\mu = +L_i$, and we simply negate the expressions for the case $\mu = -L_i$. (For example, $n = 3, \mu = L_2$ yields $H_{-2\mu} = -H_2 - H_3$, so $\mu = -L_2$ yields $H_{-2\mu} = H_2 + H_3$.)
\item A similar telescoping sum works when $\mu > \lambda$ and we want to find $q^{H_{-\mu-\lambda}}$. (Such exponents always look like $H_{\pm L_i \pm L_j}$ for $i \ne j$.) First, note that when $i < j$,
\[
    \boxed{H_{L_i - L_j}} = \sum_{k = i}^{j-1} H_{L_k - L_{k+1}} = \boxed{\sum_{k=i}^{j-1} H_k},
\]
and the $i > j$ case follows by negating both expressions. This allows us to find $H_{\pm L_i \pm L_j}$ by subtracting or adding $H_{2L_i}$ and $H_{2L_j}$, which have been computed above.
\end{itemize}

We now turn our attention to computing $e_{\mu\lambda}, f_{\lambda\mu}$, and their dual elements. Since all $e_{\mu\lambda}$ and $f_{\lambda\mu}$s are products of $E_i$s and $F_i$s, respectively, the dual elements $e^*_{\mu\lambda}$ and $f^*_{\lambda\mu}$ will be products of $F_i$s and $E_i$s. The next result serves to characterize these elements, and it also explains more explicitly how the above bilinear pairing is calculated.

\begin{lemma}
The value of $\langle F_{x_1}F_{x_2} \cdots F_{x_m}, E_{y_1} E_{y_2} \cdots E_{y_n} \rangle$ is nonzero if and only if $(x_1, \cdots, x_m)$ and $(y_1, \cdots, y_n)$ are permutations of each other (in particular, $n = m$), in which case it evaluates to $(q - q^{-1})^{-n}$ times an element of the ring $\mathbb{Z}[q, q^{-1}]$.
\end{lemma}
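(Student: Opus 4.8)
The plan is to prove the three assertions built into the statement—(i) the pairing vanishes unless the two words are permutations of one another (which forces $m=n$), (ii) when they are permutations its value lies in $(q-q^{-1})^{-n}\mathbb{Z}[q,q^{-1}]$, and (iii) in that case it is genuinely nonzero—by combining a weight-grading argument with an induction on word length driven by the coproduct. The ``only if'' part of (i) is pure grading. Assigning $F_i$ the weight $-\alpha_i$ and $E_j$ the weight $\alpha_j$, one checks directly from $\langle F_i,E_j\rangle=-\delta_{ij}(q-q^{-1})^{-1}$, $\langle q^{H_\alpha},q^{H_\beta}\rangle=q^{-(\alpha\cdot\beta)}$ and the product/coproduct compatibility rules that $\langle y,x\rangle=0$ unless $y$ and $x$ carry opposite weights. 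Hence the pairing vanishes unless $\sum_i\alpha_{x_i}=\sum_j\alpha_{y_j}$, and since the simple roots are linearly independent this is exactly the statement that the multisets $\{x_1,\dots,x_m\}$ and $\{y_1,\dots,y_n\}$ coincide, i.e. that the words are permutations of each other; in particular $m=n$.

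For (ii) and (iii) I would set up a recursion that strips one generator at a time. Writing $\langle F_{x_1}(F_{x_2}\cdots F_{x_m}),E_{y_1}\cdots E_{y_n}\rangle=\langle F_{x_1}\otimes F_{x_2}\cdots F_{x_m},\Delta(E_{y_1}\cdots E_{y_n})\rangle$ and expanding $\Delta(E_{y_1}\cdots E_{y_n})=\prod_k(E_{y_k}\otimes 1+q^{H_{y_k}}\otimes E_{y_k})$, the first tensor slot is paired against the single generator $F_{x_1}$. By the weight analysis only the terms whose first slot has weight $\alpha_{x_1}$ survive, and since a sum of simple roots equals a single simple root only when it consists of that one root, exactly one factor may place an $E$ in the first slot. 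Commuting the resulting $q^H$ factors past that $E_{y_i}$ (picking up powers of $q$ from $q^{H_a}E_b=q^{(\alpha_a,\alpha_b)}E_bq^{H_a}$) and using $\langle 1,q^{H_\beta}\rangle=1$ gives the recursion
\[
    \langle F_{x_1}\cdots F_{x_m},E_{y_1}\cdots E_{y_n}\rangle=-(q-q^{-1})^{-1}\sum_{i:\,y_i=x_1}\lambda_i\,\langle F_{x_2}\cdots F_{x_m},E_{y_1}\cdots\widehat{E_{y_i}}\cdots E_{y_n}\rangle,
\]
where the hat denotes deletion and each $\lambda_i=q^{\sum_{k<i}(\alpha_{y_k},\alpha_{y_i})}$ is a single power of $q$. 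An induction on $m$ with base case $\langle 1,1\rangle=1$ then yields (ii) at once: every step contributes $-(q-q^{-1})^{-1}$ times an element of $\mathbb{Z}[q,q^{-1}]$, so the value lies in $(q-q^{-1})^{-m}\mathbb{Z}[q,q^{-1}]$.

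The nonvanishing in (iii) is the one point that needs real care, and it is the main obstacle, because the sum over matching positions $i$ could a priori cancel. The key observation is a sign argument: unrolling the recursion completely expresses the pairing as $\bigl(-(q-q^{-1})^{-1}\bigr)^{m}\sum_{\text{matchings}}\prod_j\lambda^{(j)}$, a sum over all ways of pairing each stripped $F$ with an equal-index $E$, in which every factor $\lambda^{(j)}$ is a positive power of $q$. Since the paper fixes $q$ to be a positive real with $q\neq 1$, each complete term is nonzero and carries the common sign $\bigl(-(q-q^{-1})^{-1}\bigr)^{m}$, so no cancellation is possible. There is at least one matching precisely when the two index multisets agree—i.e. when the words are permutations of each other—and then all terms share a sign, forcing the sum to be nonzero. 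This establishes the ``if'' direction and completes the proof; everything else reduces to bookkeeping with the weight grading and the explicit coproduct.
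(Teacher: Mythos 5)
Your proof is correct, and its engine is the same as the paper's: a coproduct-driven recursion that strips one generator per step and induction on word length. The superficial difference is which side you expand; you apply $\Delta$ to the $E$-word and strip $F_{x_1}$, producing a sum over $\{i : y_i = x_1\}$, while the paper applies $\Delta$ to the $F$-word and strips $E_{y_1}$, producing the mirror-image sum over $\{i : x_i = y_1\}$. Likewise, your up-front weight-grading argument for the vanishing direction replaces bookkeeping that the paper does inside its induction. The substantive difference is your third step. The paper's induction establishes only that the pairing vanishes unless the words are permutations of each other and that the value lies in $(q-q^{-1})^{-n}\mathbb{Z}[q,q^{-1}]$; it never rules out cancellation among the terms of its sum, so the ``if'' half of the ``if and only if'' --- actual nonvanishing when the words are permutations --- is not proved there. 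Your unrolling of the recursion into $\bigl(-(q-q^{-1})^{-1}\bigr)^{n}\sum_{\sigma}q^{e(\sigma)}$, summed over index-preserving bijections $\sigma$, with every term of the same sign and at least one term present exactly when the index multisets agree, closes that gap; this is the most valuable part of your write-up. One small correction: the $\lambda_i$ (and hence the exponents $e(\sigma)$) are powers of $q$ with possibly \emph{negative} integer exponents, so ``positive power of $q$'' should read ``positive real number, being an integer power of $q>0$''; alternatively, you can avoid any positivity assumption on $q$ by observing that $\sum_{\sigma}q^{e(\sigma)}$ is a Laurent polynomial with nonnegative integer coefficients, not all zero, hence nonzero as an element of $\mathbb{Q}(q)$.
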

\begin{proof}
We proceed by induction on $n$. The base case $n = 1$ can be verified using the rule $\langle yy', x \rangle = \langle y \otimes y', \Delta(x)\rangle$ and the fact that $1$ and $q^{H_i}$ both pair to $0$ with any $F_i$ or product of $F_i$s. Then $\langle F_i, E_j \rangle$ is always equal to $-\delta_{ij}(q-q^{-1})^{-1}$, which proves the remainder of the claim.

For the inductive step, using the coproduct relation, the above pairing evaluates to
\[
    \left\langle \prod_{i=1}^n \left(1 \otimes F_{x_i} + \boxed{F_{x_i} \otimes q^{-H_{x_i}}} \right), E_{y_2} \cdots E_{y_n} \otimes E_{y_1} \right\rangle.
\]
By the inductive hypothesis, the only way this pairing is nonzero is if $(n-1)$ of the terms on the left are of the boxed type (so that there are exactly $(n-1)$ $F$s to pair with the $(n-1)$ $E$s in $E_{y_2} \cdots E_{y_n}$). In addition, to yield a nonzero result, the remaining term (of the form $(1 \otimes F_{x_i})$) must have the same index as $E_{y_1}$. Thus, we've proved the first claim, and this expression evaluates to 
\[
    \sum_{i:\, x_i = y_1} \langle F_{x_1} \cdots \hat{F_{x_i}} \cdots F_{x_n}, E_{y_2} \cdots E_{y_n} \rangle \langle q^{-H_{x_1}} \cdots F_{x_i} \cdots q^{-H_{x_n}}, E_{y_1} \rangle,
\]
where the hat means that $F_{x_i}$ is omitted. By the inductive hypothesis again, the first pairing here is always $(-(q-q^{-1})^{-1})^{n-1}$ times an element of $\mathbb{Z}[q, q^{-1}]$, and the second pairing is (moving the $F$ to the front)
\[
    \langle q^{-H_{x_1}} \cdots F_{x_i} \cdots q^{-H_{x_n}}, E_{y_1} \rangle = q^{(\alpha_{x_i}, \alpha_{x_1} + \cdots + \alpha_{x_{i-1}})} \langle F_{x_i} q^{-H_{x_1}} \cdots \hat{q^{-H_{x_i}}} \cdots q^{-H_{x_n}}, E_{y_1} \rangle
\]
by the commutativity relations of the quantum group. Now using the coproduct relation again on this final pairing and substituting shows that it is always $-(q-q^{-1})^{-1}$, so we have the right power of $(q - q^{-1})$, as desired.
\end{proof}

Now, computing the dual elements can be done as follows. For a given set of indices $(x_1, \cdots, x_n)$, all elements of the form $e' = E_{\sigma(x_1)} \cdots E_{\sigma(x_n)}$ (for a permutation of the indices $\sigma$) have nonzero pairing only with elements of the form $f' = F_{\tau(x_1)} \cdots F_{\tau(x_n)}$ (for a permutation $\tau$). However, some of these elements may be identical or linearly dependent (due to the relations of the quantum group). Thus, pick $\{e_1, \cdots, e_m\}$ and $\{f_1, \cdots, f_m\}$ to be (linear) bases of the spaces of possible $e'$s and $f'$s, and let $M$ be the matrix such that $M_{ij} = \langle e_i, f_j \rangle$. Results from Chapter 6 of \cite{jantzen} show that this pairing is nondegenerate, so $M$ must be an invertible matrix. The rows of $M^{-1}$ then tell us the correct $\mathbb{Z}[q, q^{-1}]$-combinations to take for the dual element.

\begin{example}\label{dualex}
Take $n = 4$. If we wish to find the dual element for $E_2E_3$, first note that $E_2E_3$ and $E_3E_2$ both have nonzero pairing with $F_2F_3$ and $F_3F_2$, and none of these elements have a nonzero pairing with anything else. Taking the bases to be $\{E_2E_3, E_3E_2\}$, $\{F_2F_3, F_3F_2\}$, we evaluate the pairings to find
\[
    M = (q-q^{-1})^{-2} \begin{bmatrix} 1 & 1/q \\ 1/q & 1 \end{bmatrix} \implies M^{-1} = (q - q^{-1})^2 \frac{1}{q-q^{-1}} \begin{bmatrix} q & -1 \\ -1 & q \end{bmatrix}.
\]
Thus, reading off the first row, $E_2E_3$ has dual element $(q - q^{-1}) (qF_2F_3 - F_3F_2)$.
\end{example}

In the case of $\mathfrak{so}_6$ and $\mathfrak{so}_8$, this procedure is done with the aid of a computer. See the appendix for the relevant Python code.

\subsection{Proofs of Proposition \ref{rev}, Theorem \ref{dual} and Theorem \ref{Thm2}}
The starting point is the following general method introduced in \cite{CGRS}.  Suppose that we are given a self--adjoint Hamiltonian $H$, a ground state $g$ (i.e. a vector such that $Hg=0$), and a symmetry $S$ of $H$ (i.e. $HS=SH$). Let $G$ be the diagonal matrix with entries given by the entries of $g$. Then the rows of $L=G^{-1}HG$ sum to $0$, so $L$ is the generator of a Markov process if the off--diagonal entries are non--negative. 
Then $G^2$ defines reversible measures for the Markov process, and the Markov process is self--dual with respect to the function $D=G^{-1}SG^{-1}$. 

The Hamiltonian will be constructed from the central elements of Theorem \ref{Center}. The proof will be split up into the $\mathcal{U}_q(\mathfrak{so}_6)$ case and the $\mathcal{U}_q(\mathfrak{so}_8)$ case.

\subsubsection{The $\mathcal{U}_q(\mathfrak{so}_6)$ case}
Consider the central element $C$ in part (a) of Theorem \ref{Center}. Let $H$ denote the action of $\Delta(C)$ on $\mathbb{C}^6 \otimes \mathbb{C}^6$, so that $H$ is a $36\times 36$ matrix. With the aid of a computer, one sees that there is a constant $\mathrm{const}$ such that $H-\mathrm{const}\cdot \mathrm{Id}$ decomposes as a direct sum of $1\times 1$ blocks with entry zero, $2\times 2$ blocks with entries
$$
\left(
\begin{array}{cc}
-q^4-q^{-6} & q^5 + q^{-5} \\
 q^5 + q^{-5} & -q^6 - q^{-4} 
\end{array}
\right)
$$
and a single $6 \times 6$ block
$$
\left(
\begin{array}{cccccc}
 -\frac{\left(q^2+1\right)^2 \left(q^6-q^4+1\right)}{q^6} & -q^3-\frac{2}{q^3}+q &
   \frac{-q^6+q^4-2}{q^2} & q^4+\frac{1}{q^4}-2 & \frac{-q^6+q^4-2}{q} &
   \frac{-q^6+q^4-2}{q^2} \\
 -q^3-\frac{2}{q^3}+q & \frac{-2 q^{10}+q^8-2 q^4-1}{q^6} & \frac{-q^6+q^4-2}{q} &
   \frac{1}{q^3}-\frac{1}{q^5}-2 q & q^4+\frac{1}{q^4}-2 & \frac{-q^6+q^4-2}{q} \\
 \frac{-q^6+q^4-2}{q^2} & \frac{-q^6+q^4-2}{q} & -\frac{\left(q^6+1\right)^2}{q^6} &
   \frac{-2 q^6+q^2-1}{q^4} & \frac{1}{q^3}-\frac{1}{q^5}-2 q & q^4+\frac{1}{q^4}-2
   \\
 q^4+\frac{1}{q^4}-2 & \frac{1}{q^3}-\frac{1}{q^5}-2 q & \frac{-2 q^6+q^2-1}{q^4} &
   -\frac{\left(q^2+1\right)^2 \left(q^6-q^2+1\right)}{q^4} & \frac{-2
   q^6+q^2-1}{q^3} & \frac{-2 q^6+q^2-1}{q^4} \\
 \frac{-q^6+q^4-2}{q} & q^4+\frac{1}{q^4}-2 & \frac{1}{q^3}-\frac{1}{q^5}-2 q &
   \frac{-2 q^6+q^2-1}{q^3} & -\frac{q^{10}+2 q^6-q^2+2}{q^4} &
   \frac{1}{q^3}-\frac{1}{q^5}-2 q \\
 \frac{-q^6+q^4-2}{q^2} & \frac{-q^6+q^4-2}{q} & q^4+\frac{1}{q^4}-2 & \frac{-2
   q^6+q^2-1}{q^4} & \frac{1}{q^3}-\frac{1}{q^5}-2 q &
   -\frac{\left(q^6+1\right)^2}{q^6} \\
\end{array}
\right).
$$ 
The $6\times 6$ matrix is written with respect to the ordered basis 
$$
(v_1 \otimes v_6, v_2 \otimes v_5, v_3 \otimes v_4, v_6 \otimes v_1, v_5 \otimes v_2, v_4 \otimes v_3).
$$

The next step is to find an eigenvector $g$ of $H$ such that $Hg=0$. It is clear that the entries corresponding to the $1\times 1$ blocks may be arbitrary, and the entries corresponding to the $2\times 2$ blocks have entries proportional to 
$$
\left( 
\begin{array}{c} 
q \\
1
\end{array}
\right)
$$
The $6\times 6$ matrix has rank $4$ and has two linearly independent eigenvectors 
$$
g_1:=
\left(
\begin{array}{c}
0 \\
q^2\\
-q\\
0\\
1 \\
-q
\end{array}
\right), \quad
g_2:=
\left(
\begin{array}{c}
q^2 \\
-q\\
0\\
1\\
-q \\
0
\end{array}
\right)
$$
with eigenvalue $0$. There are thus two linearly independent choices for the ground state vector $g$, which correspond to the two type D ASEPs with parameters $(3,0)$ and $(3,1)$, respectively. Note that $g_1$ and $g_2$ have the same nonzero entries in the same order -- this will eventually imply that the two processes have the same reversible measures. These two cases will be treated separately below.

\paragraph{The $g_2$ case}

We can now obtain the Markov process by conjugating the Hamiltonian. The most nontrivial interaction is in the $6\times 6$ block.  Let $G_i$ denote the $6\times 6$ diagonal matrix with entries given by the entries of $g_i$. Then conjugating the $6\times 6$ matrix by $G_2 $ yields
$$
\left(
\begin{array}{cccccc}
 -\frac{\left(q^2+1\right)^2 \left(q^6-q^4+1\right)}{q^6} & \frac{q^6-q^4+2}{q^4} & 0
   & \frac{\left(q^4-1\right)^2}{q^6} & \frac{q^6-q^4+2}{q^2} & 0 \\
 \frac{q^6-q^4+2}{q^2} & \frac{-2 q^{10}+q^8-2 q^4-1}{q^6} & 0 & \frac{2
   q^6-q^2+1}{q^6} & \frac{\left(q^4-1\right)^2}{q^4} & 0 \\
 \infty & \infty & -\frac{\left(q^6+1\right)^2}{q^6}
   & \infty & \infty & q^4+\frac{1}{q^4}-2 \\
 \frac{\left(q^4-1\right)^2}{q^2} & \frac{2 q^6-q^2+1}{q^4} & 0 &
   -\frac{\left(q^2+1\right)^2 \left(q^6-q^2+1\right)}{q^4} & \frac{2 q^6-q^2+1}{q^2}
   & 0 \\
 q^6-q^4+2 & q^4+\frac{1}{q^4}-2 & 0 & \frac{2 q^6-q^2+1}{q^4} & -\frac{q^{10}+2
   q^6-q^2+2}{q^4} & 0 \\
 \infty & \infty & q^4+\frac{1}{q^4}-2 &
   \infty & \infty &
   -\frac{\left(q^6+1\right)^2}{q^6} \\
\end{array}
\right)
$$
One can check that $2q^6-q^2+1$ and $q^6-q^4+2$ are positive for all real values of $q$. Thus, this matrix defines the generator for a Markov process with four states (assuming that the initial condition has probability zero of being in states $3$ or $6$). This matrix is obtained by simply removing the third and sixth rows and columns: 
$$
\left(
\begin{array}{cccc}
 -\frac{\left(q^2+1\right)^2 \left(q^6-q^4+1\right)}{q^6} & q^2-1+2q^{-4}
   & \frac{\left(q^4-1\right)^2}{q^6} & q^4-q^2+2q^{-2}  \\
 q^4-q^2+2q^{-2} & \frac{-2 q^{10}+q^8-2 q^4-1}{q^6}  & 2-q^{-4}+q^{-6} & \frac{\left(q^4-1\right)^2}{q^4}  \\
 \frac{\left(q^4-1\right)^2}{q^2} & 2 q^2-q^{-2}+q^{-4}  &
   -\frac{\left(q^2+1\right)^2 \left(q^6-q^2+1\right)}{q^4} & 2 q^4-1+q^{-2}
    \\
 q^6-q^4+2 & q^4+q^{-4}-2  & 2 q^2-q^{-2}+q^{-4} & -\frac{q^{10}+2
   q^6-q^2+2}{q^4}  
\end{array}
\right)
$$
This is now written with respect to the basis
$$
(v_1 \otimes v_6, v_2 \otimes v_5, v_6 \otimes v_1, v_5 \otimes v_2).
$$
We associate to each vector $v_1,v_2,v_5,v_6$ a particle configuration as follows:

\begin{center}
\begin{tikzpicture}
    \node at (-5,0){$v_1=$};
    \node[draw,fill=white] at (-4,-0.25){$1$};
    \node[draw,fill=white] at (-4,0.25){$2$};
    
    \node at (-2.5,0){$v_2=$};
    \node[draw,fill=white] at (-1.5,-0.25){$2$};
    \node[circle,draw,fill=white] at (-1.5,0.25){};
    
     \node at (0,0){$v_5=$};
    \node[draw,fill=white] at (1,-0.25){$1$};
    \node[circle,draw,fill=white] at (1,0.25){};
    
    \node at (2.5,0){$v_6=$};
    \node[circle,draw,fill=white] at (3.5,0.25){};
    \node[circle,draw,fill=white] at (3.5,-0.25){};
\end{tikzpicture}
\end{center}

The jump rates can be read off by noting that
$$ 
\left(
\begin{array}{cccc}
  *& \text{TWIST-RIGHT} & 
   \text{SWAP} & \text{TWIST-RIGHT}\\
 \text{STICK-LEFT} & *  & 
\text{STICK-RIGHT}&  \text{SWAP}\\

  \text{SWAP} & \text{TWIST-LEFT} & 
  *& \text{TWIST-LEFT} \\
 \text{STICK-LEFT} &  \text{SWAP} & \text{STICK-RIGHT}&
* \\
\end{array}
\right)
$$
Thus, one sees that the resulting Markov process is the type D ASEP with parameters $(2,1)$. 

The next step is to extend the ground state vector from $2$ sites to $N$ sites. This will be constructed algebraically. For example, for $2$ sites the vector $g_2$ can be constructed from 
\begin{align*}
-q E_1E_2E_3E_1(v_6 \otimes v_6) &= -qv_1 \otimes v_6 + v_2 \otimes v_5 - q^{-1}v_6 \otimes v_1 + v_5 \otimes v_2.
\end{align*}
yields the eigenvector $g_2$. The most natural extension of the ground state vector to $N$ sites is to define $v_6^{\otimes N}$ to be the vacuum vector (because $v_6$ is the lowest weight vector), and apply creation operators $E_j$'s to the vacuum vector. See e.g. \cite{KuanJPhysA} where this is done for the Lie algebra $\mathfrak{sp}_4$.

However, this will not extend to $N$ sites; this is because the $E_j$'s potentially create the vectors $v_3$ and $v_4$, which need to be avoided. Instead, note that
$$
-qF_1E_1F_3E_2(v_3 \otimes v_3) = -qv_1 \otimes v_6 + v_2 \otimes v_5- q^{-1}v_6 \otimes v_1    +v_5 \otimes v_2
$$
and define the ``vacuum'' vector $\Omega_N = v_3^{\otimes N}$. There is now an additional difficulty, because both creation and annihilation operators are needed.

\begin{proposition}
For any $M,K,N\geq 0$, 
$$
F_1^KE_1^MF_3^{N-M}E_2^M\vert \Omega_N \rangle = \sum_{\eta} G(\eta)\vert \eta\rangle,
$$
where the sum is over all particle configurations $\eta$ on $N$ sites with $M$ second class particles and $N-K$ first class particles, and 
$$
\vert G(\eta) \vert= Z_{N,M,K}^{-1} \prod_{x_1 \in A_1(\eta)} q^{-x_1} \prod_{x_2\in A_2(\eta)} q^{-x_2}
$$
for some normalization constant $Z_{N,M,K}$.
\end{proposition}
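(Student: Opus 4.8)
The plan is to apply the four operator-blocks to $\Omega_N = v_3^{\otimes N}$ one at a time, reading off at each stage which single-site vectors can appear and with what power of $q$. First I would record the single-site actions needed: from the fundamental-representation matrices and the weight diagram of Lemma \ref{dbldyn} one gets $E_2 v_3 = v_2$, $F_3 v_3 = v_5$, $E_1 v_2 = v_1$, $F_1 v_1 = v_2$, $F_1 v_5 = -v_6$, together with the fact that each of these operators annihilates every other basis vector it might otherwise raise or lower (for instance $E_2 v_2 = E_1 v_1 = F_3 v_5 = 0$), and that each $q^{H_i}$ is diagonal with eigenvalues in $\{q, 1, q^{-1}\}$ on the $v_j$. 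Under the dictionary $v_1 = 12$, $v_2 = 2$, $v_5 = 1$, $v_6 = \emptyset$, the vectors $v_3, v_4$ are exactly the states that must be absent from the final answer.

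Second, I would pin down the support. Applying $\Delta^{(N-1)}(E_2)^M$ places the second-class vector $v_2$ at exactly $M$ distinct sites and leaves $v_3$ elsewhere (since $E_2 v_2 = 0$); then $\Delta^{(N-1)}(F_3)^{N-M}$ turns each untouched $v_3$ into the first-class vector $v_5$ but can also turn some $v_2$'s into the discarded $v_4$. The key vanishing step is that, among the vectors now present, $E_1$ raises only $v_2 \mapsto v_1$ and annihilates $v_3, v_4, v_5$; hence $\Delta^{(N-1)}(E_1)^M$ is nonzero only on terms that still carry all $M$ copies of $v_2$, which forces $F_3$ to have avoided every second-class site and eliminates all terms containing $v_3$ or $v_4$. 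After this block exactly $M$ sites are in state $v_1$ and $N-M$ in state $v_5$, so every site carries a first-class particle; the final block $\Delta^{(N-1)}(F_1)^K$ removes $K$ of them (via $v_1 \mapsto v_2$ or $v_5 \mapsto v_6$), yielding precisely the configurations with $M$ second-class and $N-K$ first-class particles, and showing each is reached. Because the intermediate state of every site is determined by the final configuration $\eta$, the coefficient $G(\eta)$ factors as a product of four block-contributions, each evaluated on a fixed intermediate configuration.

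The main work, and the principal obstacle, is the weight bookkeeping inside each block. For a block $X^m$ I would expand the $m$-fold coproduct as a sum over which tensor slot carries the active $X$ at each of the $m$ applications, and collect, for the fixed target configuration, the product of $q^{H_i}$-eigenvalues contributed by the slots lying to the left of each active site. For $E_2^M$ this is the standard $U_q(\mathfrak{sl}_2)$ divided-power computation and yields a configuration-independent $q$-factorial times $q^{-\sum_{s \in A_2(\eta)} s} = \prod_{x \in A_2(\eta)} q^{-x}$. The difficulty is that $E_1$ and $F_1$ act through $q^{\pm H_1}$ on \emph{every} site, not only on the one being raised or lowered, so the powers of $q$ they generate are both position- and state-dependent; one must verify that after summing over all orderings these powers telescope into the remaining factor $\prod_{x \in A_1(\eta)} q^{-x}$, with every configuration-independent remainder absorbed into the single constant $Z_{N,M,K}$. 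Since the statement asserts only the magnitude $|G(\eta)|$, the signs arising from $F_1 v_5 = -v_6$ and $F_3 v_2 = -v_4$ need not be tracked, which removes one layer of accounting; I would organize the remaining computation by induction on the number of applications within each block, using the commutation relation $q^{H_1} E_j = q^{(\alpha_1, \alpha_j)} E_j q^{H_1}$ to make the telescoping explicit, and check the resulting product against the small cases (such as $N = 2$) to fix the normalization.
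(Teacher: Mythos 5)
Your proposal is correct in substance, but it takes a genuinely different route from the paper. You compute the coefficient of each configuration \emph{absolutely}: after fixing the (uniquely determined) intermediate configurations, you expand each block $E_2^M$, $F_3^{N-M}$, $E_1^M$, $F_1^K$ as a sum over which sites are hit in which order, and argue that each block contributes a configuration-independent $q$-factorial times a monomial in $q$. The paper instead runs a \emph{relative} argument: it compares $\vert G(\eta)\vert$ with $\vert G(\eta')\vert$ where $\eta'$ is obtained from $\eta$ by moving a single particle one site to the right, and checks that the four blocks contribute factors $q^{-1}$, $q^{-1}$, $q$, $1$ (for a second-class move), so the ratio is $q^{-1}$; this pins down the formula up to the constant $Z_{N,M,K}$ without ever evaluating the sums over orderings, which cancel in ratios. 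Your route buys a more complete support analysis --- your vanishing step (that $E_1^M$ kills any term in which $F_3$ converted a $v_2$ into $v_4$) is spelled out more carefully than the paper's one-line assertion --- at the cost of the divided-power bookkeeping that you defer. One caution about that deferred step: the $A_1$-factor does \emph{not} emerge from the $E_1$ and $F_1$ blocks alone, as you suggest. Carrying out the expansion, the $F_3$ block contributes $\prod_{x\in A_2(\eta)} q^{-x}$ (each $F_3$ application sited to the left of a second-class particle picks up the $K_3^{-1}$-eigenvalue $q^{-1}$ of that $v_2$), the $E_1$ block contributes $\prod_{x\in A_2(\eta)} q^{+x}$, and the $F_1$ block contributes $\prod_{x\in A_1(\eta)} q^{-x}$; the first two cancel, and only then does the total reduce to $\prod_{x\in A_1(\eta)} q^{-x}\prod_{x\in A_2(\eta)} q^{-x}$. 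Since your plan multiplies all four block contributions, the end result is the same, but the telescoping is distributed across $F_3$, $E_1$, $F_1$ rather than living in $E_1$, $F_1$ alone, and a literal execution of your outline should be adjusted accordingly.
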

\begin{proof}
When expanding the coproduct on each $E_i,F_i$, each term in the tensor product is  $1,E_i,F_i$ or $K_i^{\pm 1}$. The $E_i$ and $F_i$ act on the vectors $\{v_1,\ldots,v_6\}$ with nonzero coefficients $\pm 1$. By taking the absolute value of $G(\eta)$, the signs can be ignored. Additionally, the fact that the summation is over the specified $\eta$ follows from the identification of $\{v_1,v_2,v_4,v_5\}$ with particle configurations.

To show that $\vert G(\eta)\vert$ has the given expression, compare $\vert G(\eta)\vert$ with $\vert G(\eta') \vert$, where $\eta$ is obtained from $\eta$ by moving one particle to the right. If the moved particle is a second class particle, then $\vert G(\eta') \vert$ is altered in the following way:
\begin{itemize}
\item
The $E_2$ term produces an extra $q^{-1}$, because now $K_2$ acts on an extra $v_3$ vector.
\item
The $F_3$ term produces an extra $q^{-1}$, because now $K_3^{-1}$ acts on an extra $v_2$ vector.
\item
The $E_1$ term produces an extra $q$, because now $K_1$ acts on an extra $v_5$ vector.
\item
The $F_1$ term contribution is unchanged, because $F_1$ only acts on first--class particles.
\end{itemize}
Multiplying the four contributions above shows that $\vert G(\eta')\vert$ is multiplied by $q^{-1}$ when a second class particle is moved. This is the same as increasing one of the $x_2\in A_2(\eta)$ by $1$, explaining the appearance of  $\prod_{x_2\in A_2(\eta)} q^{-x_2}$. A similar consideration yields $\prod_{x_1 \in A_1(\eta)} q^{-x_1}$.
\end{proof}



This proposition implies Proposition \ref{rev} for the type $D$ ASEP with parameters $(2,1)$. 

\begin{remark}
The corollary can be summarized as the statement that $q^2$--exchangeable measures are reversible. This is identical to the case for multi--species ASEP \cite{BS3}, multi--species ASEP$(q,j)$ \cite{KIMRN} and multi--species $q$--TAZRP \cite{KuanAHP}.

\end{remark}

Next we prove Theorem \ref{dual} for the type $D$ ASEP with parameters $(2,1)$.

Recall the $q$--exponential
$$ \exp _{q}(x) :=\sum_{n \geq 0} \frac{x^{n}}{\{n\}_{q} !}$$
where
$$
\{n\}_{q} :=\frac{1-q^{n}}{1-q} $$
Let $$S(\eta,\xi) = \langle \xi \vert \exp_{q^2}(\Delta^{(N-1)}(F_1)) \vert \eta \rangle.$$
Then Proposition 5.1 of \cite{CGRS} implies that
$$
S(\eta,\xi) = \langle \xi \vert F_1^{(1)} \cdots F^{(L)}_1\vert \eta \rangle
$$
for 
$$
F_1^{(j)} = 1^{\otimes j-1} \otimes F_1 \otimes (K_1^{-1})^{\otimes N-j}.
$$
Furthermore, the general framework of \cite{CGRS} then says that
$$
D(\eta,\xi) = G^{-1}(\eta)S(\eta,\xi)G^{-1}(\xi)
$$
is a duality function. First, note that $S(\eta,\xi)$ is nonzero if and only if the indicator function is nonzero. Second, note a duality function can be multiplied by quantities that are constant with respect to the dynamics, so thus the $Z_{N,M,K}$ terms in $G$ can be ignored. To see the contribution from the $G^{-1}$ terms, set $\delta = \eta-\xi$ to be the particle configuration consisting of particles in $\eta$ but not in $\xi$. Then
$$
G^{-1}(\eta)G^{-1}(\xi) = G^{-1}(\delta)G^{-2}(\xi). 
$$
The term $G^{-2}(\xi)$ contributes (up to constants)
$$
\prod_{x \in A_1(\xi)} q^{2x} \prod_{x \in A_2(\xi)} q^{2x}.
$$
Third, note that $F_1$ is applied at every particle in $\delta$. In other words, $F_1$ is applied at every lattice site $y\in A_1(\delta)$. The $K_1^{-1}$ makes a contribution for every $x$ to the right of $y$, with value
$$
\begin{cases}
q^{-1}, \text{ for } x \in A_1(\xi)\cap A_2(\xi)\\
q, \text{ for } x\in A_2(\xi)-A_1(\xi) \\
q, \text{ for } x\notin A_1(\xi)\cup A_2(\xi) \\
q^{-1}, \text{ for } x\in A_1(\xi)-A_2(\xi).
\end{cases}
$$
corresponding to the four vectors $v_1,v_2,v_4,v_5$ respectively. The $G^{-1}(\delta)$ then contributes $q^{-1}$ for every $x$ to the right of $y$. Multiplying $q^{-1}$ to each of the four terms above, what remains is just $q^{-2}$ for every $x\in A_1(\xi)$ and $y\in A_1(\delta)$. This results in 
$$
\prod_{x\in A_1(\xi)} q^{-2\cev{N}_x^1(\delta)} = \prod_{x\in A_1(\xi)} q^{-2\cev{N}_x^1(\eta)+2\cev{N}_x^1(\xi)}  .
$$
Since $ \prod_{x\in A_1(\xi)} q^{2\cev{N}_x^1(\xi)} $ is constant with respect to the dynamics, what remains is the $-2\cev{N}_x^1(\eta)$ contribution. Combining all contributions gives the function $D(\eta,\xi)$, as needed.

\paragraph{The $g_1$ case}
We can obtain the entries of the vector $g_1$ via
$$
 -q^2(-q+q^{-1})^{-1}E_2E_3E_1^2(v_6 \otimes v_6) =q^2v_2 \otimes v_5 - q^{}v_3 \otimes v_4 - q^{}v_4 \otimes v_3 + v_5\otimes v_2
$$

Analogously to before, conjugating the $6\times 6$ matrix by $G_1$ and removing the first and fourth rows and columns yields
$$ L_1=
\left(
\begin{array}{cccc}
  \frac{-2 q^{10}+q^8-2 q^4-1}{q^6} & q^4-q^2+2q^{-2}& 
   \frac{\left(q^4-1\right)^2}{q^6} & q^4-q^2+2q^{-2}\\
  q^6-q^4+2 & -\frac{\left(q^6+1\right)^2}{q^6} & 
  -q^{-4}+q^{-6}+2 & q^4+q^{-4}-2 \\

  \frac{\left(q^4-1\right)^2}{q^2} & 2 q^2-q^{-2}+q^{-4} & 
   -\frac{q^{10}+2 q^6-q^2+2}{q^4} & 2 q^2-q^{-2}+q^{-4} \\
  q^6-q^4+2 & q^4+q^{-4}-2 & -q^{-4}+q^{-6}+2 &
   -\frac{\left(q^6+1\right)^2}{q^6} \\
\end{array}
\right)
$$
with respect to the basis 
$$
(v_2 \otimes v_5, v_3 \otimes v_4, v_5 \otimes v_2, v_4 \otimes v_3).
$$
Associate to each vector a particle configuration by 
\begin{center}
\begin{tikzpicture}
    \node at (-5,0){$v_2=$};
    \node[draw,fill=white] at (-4,-0.25){$1$};
    \node[draw,fill=white] at (-4,0.25){$2$};
    
    \node at (-2.5,0){$v_3=$};
    \node[draw,fill=white] at (-1.5,-0.25){$2$};
    \node[circle,draw,fill=white] at (-1.5,0.25){};
    
     \node at (0,0){$v_4=$};
    \node[draw,fill=white] at (1,-0.25){$1$};
    \node[circle,draw,fill=white] at (1,0.25){};
    
    \node at (2.5,0){$v_5=$};
    \node[circle,draw,fill=white] at (3.5,0.25){};
    \node[circle,draw,fill=white] at (3.5,-0.25){};
\end{tikzpicture}
\end{center}

In this case, we let $v_5$ be the vacuum state. So we start from the vacuum vector $\Omega_N=v_5^{\otimes N}$. Then $E_2$ is the operator that produce a particle of class one, and $E_3$ is the  operator that produce a particle of class two, and $F_2$,$F_3$ are annihilation operators for particles of class one and two respectively. 

\begin{proposition}
For any $M_1,M_2,N>0$,
\begin{equation}
    E_2^{M_1}E_3^{M_2}\vert \Omega_N \rangle=  \sum_{\eta} G(\eta)\vert \eta\rangle,
\end{equation}
where the sum is over all particle configurations $\eta$ on $N$ sites with $M_1$ first class particles and $M_2$ second class particles, and 
$$
\vert G(\eta) \vert= Z_{N,M_1,M_2}^{-1} \prod_{x_1 \in A_1(\eta)} q^{-x_1} \prod_{x_2\in A_2(\eta)} q^{-x_2}
$$
for some normalization constant $Z_{N,M_1,M_2}$.
\end{proposition}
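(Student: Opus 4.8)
The plan is to follow the same strategy as the analogous proposition in the $g_2$ case, but the argument is cleaner here because only the creation operators $E_2, E_3$ appear and no annihilation operators are involved. First I would record the two inputs that drive everything. Expanding the iterated coproduct gives $\Delta^{(N-1)}(E_i) = \sum_{j=1}^N K_i^{\otimes(j-1)} \otimes E_i \otimes 1^{\otimes(N-j)}$, so applying $E_i$ places a single generator at one site with a string of $K_i$'s to its left. On a single site the relevant actions are $E_2 v_5 = -v_4$, $E_2 v_3 = v_2$, $E_3 v_5 = -v_3$, $E_3 v_4 = v_2$, with $E_2$ and $E_3$ annihilating the remaining vectors; in particular each site can absorb at most one $E_2$ and at most one $E_3$, which forces the sum to range exactly over configurations with $M_1$ first--class and $M_2$ second--class particles (this is where the identification of $v_2,v_3,v_4,v_5$ with configurations is used). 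The second input is the list of $K_i$ eigenvalues: $K_2$ acts by $q$ on $v_2, v_4$ and by $q^{-1}$ on $v_3, v_5$, while $K_3$ acts by $q$ on $v_2, v_3$ and by $q^{-1}$ on $v_4, v_5$. The key observation is that $K_2$ detects only the presence of a first--class particle and $K_3$ detects only the presence of a second--class particle; each weight operator is ``blind'' to the other species.

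Next I would use the fact that $a_{23}=0$ in the $D_3$ Cartan matrix (so $E_2$ and $E_3$ commute by the relations of the quantum group) to write $E_2^{M_1}E_3^{M_2}|\Omega_N\rangle = E_2^{M_1}\big(E_3^{M_2}|\Omega_N\rangle\big)$ and handle the two species in turn. For a single species the computation is the standard one: expanding $\Delta^{(N-1)}(E_3)^{M_2}$ and collecting the terms that create second--class particles at a fixed set of sites, every summand carries the same overall sign $(-1)^{M_2}$ (each $E_3 v_5 = -v_3$ contributes one minus sign and each $K_3$ contributes only a power of $q$), so no cancellation occurs, and summing the resulting powers of $q$ gives $|G| = C_{M_2}\prod_{x \in A_2}q^{-x}$ with a position--independent constant $C_{M_2}$ (for instance $C_1 = q$ and $C_2 = q^2 + q^4$). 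Applying $E_2^{M_1}$ to such a state repeats the same computation for the first--class particles; because $K_2$ is blind to the second--class particles already present (a second--class site contributes $q^{-1}$, exactly as an empty site would), the $q$-powers it produces depend only on the first--class positions, so this multiplies each coefficient by $C_{M_1}\prod_{x \in A_1} q^{-x}$ in absolute value. One subtlety is that $E_2$ creates a first--class particle with sign $-1$ on an empty site but sign $+1$ on a site already carrying a second--class particle; however, for a fixed target configuration $\eta$ whether a site ends as $v_4$ or $v_2$ is determined by $\eta$, not by the order of operations, so all summands for a given $\eta$ still share a common sign and $|G(\eta)|$ is a sum of like--signed terms.

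Combining these two steps yields $|G(\eta)| = C_{M_1}C_{M_2}\prod_{x \in A_1(\eta)}q^{-x}\prod_{x \in A_2(\eta)}q^{-x}$, so one may take $Z_{N,M_1,M_2} = (C_{M_1}C_{M_2})^{-1}$, which depends only on $N, M_1, M_2$ as required. The main obstacle — the part that must be verified carefully rather than by the slick factorization above — is showing that the per--species normalization constants $C_M$ are genuinely independent of the particle positions and that no cancellation collapses $|G(\eta)|$ to zero. Both reduce to the sign and power--of-$q$ bookkeeping in the single--species step; equivalently, one can phrase this exactly as in the $g_2$ case by comparing $|G(\eta)|$ with $|G(\eta')|$ when a single particle is moved one step to the right and checking, using the $K_i$ eigenvalues recorded above, that the net change is precisely a factor of $q^{-1}$, which is what produces the $\prod q^{-x}$ form.
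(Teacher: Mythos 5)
Your proof is correct, and it supplies considerably more than the paper does: the paper's entire proof of this proposition is the single sentence ``This is proved by direct computation,'' so there is essentially nothing to match against. What you have written is, in effect, the direct computation carried out properly, organized in the style of the paper's proof of the analogous proposition in the $g_2$ case (coproduct expansion $\Delta^{(N-1)}(E_i)=\sum_j K_i^{\otimes(j-1)}\otimes E_i\otimes 1^{\otimes(N-j)}$, single-site actions, $K$-eigenvalue bookkeeping, and taking absolute values so that the global sign per configuration can be ignored). Your one genuinely new structural ingredient is the decoupling step: since $a_{23}=0$ in the $D_3$ Cartan matrix, $E_2$ and $E_3$ commute, and since $K_2$ is blind to second-class occupancy ($K_2v_3=q^{-1}v_3$ exactly as $K_2v_5=q^{-1}v_5$) while $K_3$ is blind to first-class occupancy, the two-species coefficient factors into two independent single-species computations. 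This is cleaner than the paper's implicit route and makes the position-independence of the normalization constant transparent: for a single species the sum over orderings of the $E_3$'s produces $\prod_i q^{-(x_i-1)}$ times a Mahonian sum $\sum_\sigma q^{2\,\mathrm{stat}(\sigma)}$ that depends only on $M$, which is exactly your $C_M$ (and your values $C_1=q$, $C_2=q^2+q^4$ check out). Your sign analysis is also right: all $E_3$'s act on $v_5$ (sign $-1$ each), and whether an $E_2$ acts on $v_5$ (sign $-1$) or on $v_3$ (sign $+1$) is determined by $\eta$ alone, so every summand contributing to a fixed $\eta$ carries the sign $(-1)^{M_2+|A_1(\eta)\setminus A_2(\eta)|}$ and no cancellation can occur. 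The closing remark correctly identifies that this is equivalent to the paper's $g_2$-style argument of moving one particle a single step to the right and verifying the net factor $q^{-1}$; either phrasing completes the proof.
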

\begin{proof}
This is proved by direct computation.
\end{proof}
This proposition implies Proposition \ref{rev} for the type $D$ ASEP with parameters $(2,1)$. 

To find a duality function, we have several natural symmetries, for example, $exp_{q^2}(\Delta^{(N-1)}(E_2))$, $exp_{q^2}(\Delta^{(N-1)}(E_3))$, $exp_{q^{2}}(\Delta^{(N-1)}(F_2))$, $exp_{q^{2}}(\Delta^{(N-1)}(F_3))$.
Here we take 
$$S(\eta,\xi) = \langle \xi \vert \exp_{q^{2}}(\Delta^{(N-1)}(F_2))exp_{q^{2}}(\Delta^{(N-1)}(F_3)) \vert \eta \rangle.$$
Then, $S(\eta,\xi)$ is nonzero if and only if $\xi$ is obtained by deleting some particles of class one and two in $\eta$. Again, denote $\delta=\eta-\xi$. Thus, $F_2$ is applied at every lattice site $y\in A_1(\delta)$ and $F_3$ is applied at every lattice site $y\in A_2(\delta)$. The $K_2^{-1}$ makes a contribution for every $x$ to the right of $y\in A_1(\delta)$, with value
$$
\begin{cases}
q^{-1}, \text{ for } x \in A_1(\xi)\cap A_2(\xi)\\
q, \text{ for } x\in A_2(\xi)-A_1(\xi) \\
q, \text{ for } x\notin A_1(\xi)\cup A_2(\xi) \\
q^{-1}, \text{ for } x\in A_1(\xi)-A_2(\xi).
\end{cases}
$$
corresponding to the four vectors $v_2,v_3,v_5,v_4$ respectively.

 The $K_3^{-1}$ makes a contribution for every $x$ to the right of $y\in A_2(\delta)$, with value
$$
\begin{cases}
q^{-1}, \text{ for } x \in A_1(\xi)\cap A_2(\xi)\\
q^{-1}, \text{ for } x\in A_2(\xi)-A_1(\xi) \\
q, \text{ for } x\notin A_1(\xi)\cup A_2(\xi) \\
q, \text{ for } x\in A_1(\xi)-A_2(\xi).
\end{cases}
$$
corresponding to the four vectors $v_2,v_3,v_5,v_4$ respectively.

 The $G^{-1}(\delta)$ then contributes $q^{-1}$ for every $x$ to the right of $y\in A_1(\delta)$ and every $x$ to the right of $y\in A_2(\delta)$ respectively. Multiplying $q^{-1}$ to each of the eight terms above, what remains is just $q^{-2}$ for every $x\in A_1(\xi)$ with $y\in A_1(\delta)$ and $q^{-2}$ for every $x\in A_2(\xi)$ with $y\in A_2(\delta)$. This results in 
$$
\prod_{x\in A_1(\xi)} q^{-2\cev{N}_x^1(\eta)}\prod_{x\in A_2(\xi)} q^{-2\cev{N}_x^2(\eta)}.
$$

This proves Theorem \ref{Thm2} for the type $D$ ASEP with parameters $(2,0)$.

\subsubsection{The $\mathcal{U}_q(\mathfrak{so}_8)$ case}
As before, let $C$ be the central element from Theorem \ref{Center} and let $H$ be the action of $\Delta(C)$ on $\mathbb{C}^8 \otimes \mathbb{C}^8$. Then $H$ is a $64 \times 64$ matrix. With the aid of a computer, one sees that there is a constant $\mathrm{const}$ such that $H-\mathrm{const}\mathrm{Id}$ decomposes as a direct sum of $1\times 1$ blocks with entry zero, $2\times 2$ blocks with entries
$$
\left(
\begin{array}{cc}
-q^6-q^{-8} & q^7 + q^{-7} \\
 q^7 + q^{-7} & -q^8 - q^{-6} 
\end{array}
\right)
$$
and a single $8 \times 8$ block which has the form $U^T + D + U$, where 
$$
U=\left(
\begin{array}{cccccccc}
 0 & \frac{-q^8+q^6-2}{q^5} & \frac{-q^8+q^6-2}{q^4} & \frac{-q^8+q^6-2}{q^3} &
   q^6+\frac{1}{q^6}-2 & \frac{-q^8+q^6-2}{q} & \frac{-q^8+q^6-2}{q^2} &
   \frac{-q^8+q^6-2}{q^3} \\
 0 & 0 & \frac{-q^8+q^6-2}{q^3} & \frac{-q^8+q^6-2}{q^2} & \frac{-2 q^8+q^2-1}{q^7} &
   q^6+\frac{1}{q^6}-2 & \frac{-q^8+q^6-2}{q} & \frac{-q^8+q^6-2}{q^2} \\
 0 & 0 & 0 & \frac{-q^8+q^6-2}{q} & \frac{-2 q^8+q^2-1}{q^6} & \frac{-2
   q^8+q^2-1}{q^7} & q^6+\frac{1}{q^6}-2 & \frac{-q^8+q^6-2}{q} \\
 0 & 0 & 0 & 0 & \frac{-2 q^8+q^2-1}{q^5} & \frac{-2 q^8+q^2-1}{q^6} & \frac{-2
   q^8+q^2-1}{q^7} & q^6+\frac{1}{q^6}-2 \\
 0 & 0 & 0 & 0 & 0 & \frac{-2 q^8+q^2-1}{q^3} & \frac{-2 q^8+q^2-1}{q^4} & \frac{-2
   q^8+q^2-1}{q^5} \\
 0 & 0 & 0 & 0 & 0 & 0 & \frac{-2 q^8+q^2-1}{q^5} & \frac{-2 q^8+q^2-1}{q^6} \\
 0 & 0 & 0 & 0 & 0 & 0 & 0 & \frac{-2 q^8+q^2-1}{q^7} \\
 0 & 0 & 0 & 0 & 0 & 0 & 0 & 0 \\
\end{array}
\right)
$$
and $D$ is the diagonal matrix with entries
\begin{multline*}
\Big\{-q^6-q^2-\frac{2}{q^6}-\frac{1}{q^8}+1,\frac{-q^{14}-q^{12}+q^{10}-2
   q^4-1}{q^8},\frac{-2 q^{14}+q^{12}-2
   q^6-1}{q^8},-q^8-\frac{1}{q^8}-2,\\
   \frac{-q^{14}-2
   q^{12}+q^6-q^4-1}{q^6},\frac{-q^{14}-2 q^{10}+q^4-q^2-1}{q^6},\frac{-q^{14}-2
   q^8+q^2-2}{q^6},-q^8-\frac{1}{q^8}-2\Big\}.
\end{multline*}
The $8\times 8$ matrix is written with respect to the ordered basis $$(v_1\otimes v_8, v_2\otimes v_7, v_3\otimes v_6, v_4\otimes v_5, v_8\otimes v_1, v_7\otimes v_2, v_6\otimes v_3,v_5\otimes v_4)$$ It has rank $5$, with three linearly independent eigenvectors of eigenvalue $0$:
$$
\left\{0,-q^2,q,0,0,{-1},q,0\right\}^T, \quad 
\left\{0,0,-q^2,q,0,0,-1,q\right\}^T, \quad
 \left\{-q^2,q,0,0,-1,q,0,0\right\}^T.
$$
Again, note that the three vectors have the same nonzero entries as each other, and as $g_1$ and $g_2$. This will imply that all five processes have the same reversible measures.

\paragraph{Third eigenvector}

After conjugating and removing the third, fourth, seventh and eighth rows and columns, we get the generator
\begin{equation}
\begin{pmatrix}
* & -\frac{-q^8+q^6-2}{q^6} & \frac{q^6+q^{-6}-2}{q^2} &q^6-q^4+2q^{-2} \\
-\frac{-q^8+q^6-2}{q^4} & * & -\frac{-2q^8+q^2-1}{q^8} & q^6+q^{-6}-2\\
q^2(q^6+q^{-6}-2) & q^{-6}-q^{-4}+2q^{2} & * &-\frac{-2q^8+q^2-1}{q^2} \\
q^8-q^6+2 & q^6+q^{-6}-2 &-\frac{-2q^8+q^2-1}{q^4}  & *
\end{pmatrix}
\end{equation}
Associate to each vector $v_1,v_2,v_7,v_8$ a particle configuration as follows:
$$
v_1=\begin{tikzpicture}[scale=1,baseline=-1mm]
\node[draw,fill=white] at (-3.5,0.25){$2$};
    \node[draw,fill=white] at (-3.5,-0.25){$1$};
\end{tikzpicture},
\quad
v_2=\begin{tikzpicture}[scale=1,baseline=-1mm]
\node[draw,fill=white] at (-4,-0.25){$2$};
    \node[circle,draw,fill=white] at (-4,0.25){};
\end{tikzpicture},
\quad
v_7=\begin{tikzpicture}[scale=1,baseline=-1mm]
\node[draw,fill=white] at (-4,-0.25){$1$};
    \node[circle,draw,fill=white] at (-4,0.25){};
\end{tikzpicture},
\quad
v_8=\begin{tikzpicture}[scale=1,baseline=-1mm]
 \node[circle,draw,fill=white] at (-4,-0.25){};
    \node[circle,draw,fill=white] at (-4,0.25){};
\end{tikzpicture}
$$
We get the type D ASEP with parameters $(3,2)$. 

Let $v_4$ be the vacuum state, then 
$$qF_1E_1F_2E_2F_4E_3(v_4\otimes v_4)=-qv_1\otimes v_8+v_2\otimes v_7-q^{-1}v_8\otimes v_1+v_2\otimes v_2.$$

\begin{proposition}
For any $M,K,N\geq 0$, 
$$
F_1^KE_1^MF_2^{N-M}E_2^MF_4^{N-M}E_3^M\vert \Omega_N \rangle = \sum_{\eta} G(\eta)\vert \eta\rangle,
$$
where the sum is over all particle configurations $\eta$ on $N$ sites with $M$ second class particles and $N-K$ first class particles, and 
$$
\vert G(\eta) \vert= Z_{N,M,K}^{-1} \prod_{x_1 \in A_1(\eta)} q^{-x_1} \prod_{x_2\in A_2(\eta)} q^{-x_2}
$$
for some normalization constant $Z_{N,M,K}$.
\end{proposition}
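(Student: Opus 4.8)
The plan is to adapt, essentially verbatim, the argument used for the analogous proposition in the $\mathfrak{so}_6$ ($g_2$) case, since the roles of the operators and the vacuum are completely parallel. First I would expand each of the six factors in $F_1^K E_1^M F_2^{N-M} E_2^M F_4^{N-M} E_3^M$ through the iterated coproduct $\Delta^{(N-1)}$, using $\Delta(E_i) = E_i \otimes 1 + K_i \otimes E_i$ and $\Delta(F_i) = 1 \otimes F_i + F_i \otimes K_i^{-1}$. After expansion, on each of the $N$ tensor slots every operator acts as one of $1$, $E_i$, $F_i$, or $K_i^{\pm 1}$; since the $E_i$ and $F_i$ send a basis vector $v_j$ to $\pm v_{j'}$ while the $K_i^{\pm 1}$ act diagonally by powers of $q$, each surviving term equals $\pm$ (a power of $q$) times a pure tensor $|\eta\rangle$. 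Passing to $|G(\eta)|$ discards all signs, exactly as in the $\mathfrak{so}_6$ proof.

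Next I would determine the support. Using the identification of $v_1, v_2, v_7, v_8$ with the configurations $12, 2, 1, \emptyset$ and reading off the single- and two-site action (as in the displayed identity $qF_1E_1F_2E_2F_4E_3(v_4 \otimes v_4) = \cdots$), the operators map $\Omega_N = v_4^{\otimes N}$ onto a combination of configurations that each carry exactly $M$ second-class and $N-K$ first-class particles, and no vectors outside $\{v_1, v_2, v_7, v_8\}$ survive. This fixes the index set of the sum.

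The core of the argument is the ratio computation: comparing $|G(\eta)|$ with $|G(\eta')|$, where $\eta'$ is $\eta$ with a single particle shifted one site to the right. For each of the six operators I would ask whether its diagonal $K_i^{\pm 1}$ factor now acts on a vector of different weight, and record the resulting extra power of $q$. This uses only the weights $\pm L_i$ of $v_1, \ldots, v_8$ together with the simple roots $\alpha_1 = L_1 - L_2$, $\alpha_2 = L_2 - L_3$, $\alpha_3 = L_3 - L_4$, $\alpha_4 = L_3 + L_4$, via the rule that $K_i$ contributes $q^{(\alpha_i, \mu)}$ on a weight-$\mu$ vector. I expect the exponents to sum to $-1$ for a rightward move of either a first-class or a second-class particle. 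Since increasing one coordinate of $A_1(\eta)$ or $A_2(\eta)$ by one multiplies $\prod_{x_1 \in A_1} q^{-x_1} \prod_{x_2 \in A_2} q^{-x_2}$ by $q^{-1}$, this recovers the claimed formula up to the overall constant $Z_{N,M,K}$.

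The main obstacle is the bookkeeping in this last step. With six operators rather than the four of the $\mathfrak{so}_6$ case, there are more $K_i^{\pm 1}$ factors to track, and one must verify that the additional $F_2^{N-M}$ and $F_4^{N-M}$ contribute exactly the powers needed to keep the two classes symmetric and the net factor per rightward move equal to $q^{-1}$. I would organize this by checking the $q^{-1}$ balance separately for a first-class move (governed by $F_1, E_1$ and the ``1''-type intermediate vectors) and for a second-class move (governed by $E_2, F_2, E_3, F_4$), confirming in each case that the diagonal factors from operators not responsible for the moved particle cancel in pairs. A computer check of the $N = 2$ and $N = 3$ cases, matched against the explicit matrices already computed for this eigenvector, would provide a useful consistency test before committing to the general argument.
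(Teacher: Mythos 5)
Your proposal is correct and is essentially identical to the paper's approach: the paper's own proof of this proposition is literally ``The proof is similar to before, so the details are omitted,'' deferring to the $\mathfrak{so}_6$ ($g_2$) argument, and your plan --- coproduct expansion into $1$, $E_i$, $F_i$, $K_i^{\pm 1}$ factors, discarding signs via the absolute value, identifying the support through the configurations attached to $v_1, v_2, v_7, v_8$, and then computing the ratio $|G(\eta')|/|G(\eta)| = q^{-1}$ for a single rightward particle move by tracking the $K_i^{\pm 1}$ weight contributions --- is exactly that argument transported to the six-operator $\mathfrak{so}_8$ setting. Your added suggestion of a small-$N$ computer consistency check goes beyond what the paper records but does not change the method.
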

\begin{proof}
The proof is similar to before, so the details are omitted.
\end{proof}

Proposition \ref{rev} and Theorem \ref{dual} hold for this process.

\paragraph{First eigenvector}
After conjugating and removing the first, forth, fifth and eighth rows and columns, we get the generator
\begin{equation}
\begin{pmatrix}
* & -\frac{-q^8+q^6-2}{q^4} & \frac{q^6+q^{-6}-2}{q^2} &-\frac{-q^8+q^6-2}{q^2} \\
-\frac{-q^8+q^6-2}{q^2} & * & -\frac{-2q^8+q^2-1}{q^8} & q^6+q^{-6}-2\\
q^2(q^6+q^{-6}-2) & -\frac{-2q^8+q^2-1}{q^6} & * &-\frac{-2q^8+q^2-1}{q^4} \\
q^8-q^6+2 & q^6+q^{-6}-2 &-\frac{-2q^8+q^2-1}{q^6}  & *
\end{pmatrix}
\end{equation}
where the diagonal entries are the unique expressions such that the rows sum to $0$. Associate to each vector $v_2,v_3,v_6,v_7$ a particle configuration as follows:
$$
v_2=\begin{tikzpicture}[scale=1,baseline=-1mm]
\node[draw,fill=white] at (-3.5,0.25){$2$};
    \node[draw,fill=white] at (-3.5,-0.25){$1$};
\end{tikzpicture},
\quad
v_3=\begin{tikzpicture}[scale=1,baseline=-1mm]
\node[draw,fill=white] at (-4,-0.25){$2$};
    \node[circle,draw,fill=white] at (-4,0.25){};
\end{tikzpicture},
\quad
v_6=\begin{tikzpicture}[scale=1,baseline=-1mm]
\node[draw,fill=white] at (-4,-0.25){$1$};
    \node[circle,draw,fill=white] at (-4,0.25){};
\end{tikzpicture},
\quad
v_7=\begin{tikzpicture}[scale=1,baseline=-1mm]
 \node[circle,draw,fill=white] at (-4,-0.25){};
    \node[circle,draw,fill=white] at (-4,0.25){};
\end{tikzpicture}
$$One obtains the type $D$ ASEP with parameters $(3,1)$.

Then Proposition \ref{rev} and Theorem \ref{dual} hold for this process, using $F_2$ in place of $F_1$.

\paragraph{Second eigenvector}
After conjugating and removing the first, second, fifth and sixth rows and columns, we get the generator
\begin{equation}
\begin{pmatrix}
* & q^6-q^{4}+2q^{-2} & \frac{q^6+q^{-6}-2}{q^2} & q^6-q^{4}+2q^{-2} \\
q^8-q^{6}+2 & * & -\frac{-2q^8+q^2-1}{q^8} & q^6+q^{-6}-2\\
q^2(q^6+q^{-6}-2) & q^{-6}-q^{-4}+2q^{2} & * & q^{-6}-q^{-4}+2q^{2} \\
q^8-q^6+2 & q^6+q^{-6}-2 &-\frac{-2q^8+q^2-1}{q^8}  & *
\end{pmatrix}
\end{equation}
Associate to each vector $v_3,v_4,v_5,v_6$ a particle configuration as follows:
$$
v_3=\begin{tikzpicture}[scale=1,baseline=-1mm]
\node[draw,fill=white] at (-3.5,0.25){$2$};
    \node[draw,fill=white] at (-3.5,-0.25){$1$};
\end{tikzpicture},
\quad
v_4=\begin{tikzpicture}[scale=1,baseline=-1mm]
\node[draw,fill=white] at (-4,-0.25){$2$};
    \node[circle,draw,fill=white] at (-4,0.25){};
\end{tikzpicture},
\quad
v_5=\begin{tikzpicture}[scale=1,baseline=-1mm]
\node[draw,fill=white] at (-4,-0.25){$1$};
    \node[circle,draw,fill=white] at (-4,0.25){};
\end{tikzpicture},
\quad
v_6=\begin{tikzpicture}[scale=1,baseline=-1mm]
 \node[circle,draw,fill=white] at (-4,-0.25){};
    \node[circle,draw,fill=white] at (-4,0.25){};
\end{tikzpicture}
$$
One obtains the type $D$ ASEP with parameters $(3,0)$. Then Proposition \ref{rev} and Theorem \ref{Thm2} hold for this process, using $F_3$ and $F_4$ in place of $F_2$ and $F_3$.

\section{An alternative construction}\label{Third}
To construct interacting particle systems from the Casimir of $\mathcal{U}_q(\mathfrak{so}_{6})$ and $\mathcal{U}_q(\mathfrak{so}_8)$, certain states were removed from the state space. This section provides an alternative method to construct interacting particle systems from the Casimir $\Omega$ of $\mathcal{U}(\mathfrak{so}_{2n})$, which does not require the removal of undesired states. The key difference is to subtract a diagonal matrix from $\Delta(\Omega)$, rather than subtracting a multiple of the identity matrix.

\subsection{Analyzing the Generator Matrix from \texorpdfstring{\sotwon}{so2n}}

\subsubsection{Obtaining \texorpdfstring{$G_n$}{Gn}}

In this section, we describe the process of arriving at $G_n$, a generator matrix of a Markov process, from the Casimir element of a Lie algebra of the form \sotwon \ for $n\geq 2$.

First, for this section, we redefine $H_i$ to be $E_{i,i}-E_{n+i,n+i}$. Note that now the Cartan-Weyl basis and corresponding dual basis for any \sotwon \ follows the same form:

For $i\leq n$
\begin{itemize}
    \item $H_i$ has dual $\frac{1}{4n-4}H_i$.
\end{itemize}
For $i<j\leq n$:
\begin{itemize}
    \item $X_{ij}$ has dual $\frac{1}{4n-4}X_{ji}$.
    \item $X_{ji}$ has dual $\frac{1}{4n-4}X_{ij}$.
    \item $Y_{ij}$ has dual $-\frac{1}{4n-4}Z_{ij}$.
    \item $Z_{ij}$ has dual $-\frac{1}{4n-4}Y_{ij}$.
\end{itemize}
These can be calculated based on the Killing Form ($H_i$ and $X_{ij}$ products have trace 2, and $Y_{ij}$ and $Z_{ij}$ products have trace $-2$).

The Casimir element is $\Omega=\sum\limits_iA_iA^i$ where each $A_i$ is a unique element of the Cartan-Weyl basis and $A^i$ is its counterpart in the dual basis. Let $\mathbb{C}^{2n}$ be the usual vector representation on \sotwon, so $\rhoCC$ defines a representation. $\rho(\Omega)=\sum\limits_i\rhoCC(A_i)\rhoCC(A^i)$ thus defines a $4n^2\times4n^2$ matrix.

Let $C$ be a $4n^2\times4n^2$ diagonal matrix with entries $c_i$ such that $c_i=\sum\limits_j\rho(\Omega)_{ij}$. Let $H=\rho_{\mathbb{C}^{2n}\otimes\mathbb{C}^{2n}}(\Omega)-C$. Let $G_n$ be the matrix found by negating rows of $\rho(\Omega)-C$ to force each diagonal entry to be non-positive. This leads to a generator matrix with all entries finite.

Note: Previous research on similar Lie algebras has used $C=kI$ for scalar $k$ and then used conjugation to arrive at a generator matrix \cite{KuanJPhysA}. By allowing the diagonal matrix $C$ to have different diagonal entries, we can construct an interacting particle system without discarding states. 

\subsubsection{Properties of \texorpdfstring{$G_n$}{Gn}}

In this section, we outline properties of $G_n$ that will be helpful in analyzing the states of the Markov process $G_n$ describes.

\begin{lemma}\label{lem:block}
The representation of the Casimir element $\rho(\Omega)$, a $4n^2 \times 4n^2$ matrix, can be written as a $2n \times 2n$ matrix of blocks of size $2n \times 2n$ in block form as:
\[
\rho(\Omega)=\frac{1}{2n-2}\begin{pmatrix}
\begin{matrix}
D_1 & X_{21} & X_{31} & \cdots & X_{n,1}\\
X_{12} & D_2 & X_{32} & \cdots & X_{n,2}\\
X_{13} & X_{23} & D_3 & \cdots & X_{n,3}\\
\cdots & \cdots & \cdots & \cdots & \cdots\\
X_{1,n} & X_{2,n} & X_{3,n} & \cdots & D_n
\end{matrix} & \vline & \begin{matrix}
0 & -Z_{12} & -Z_{13} & \cdots & -Z_{1,n}\\
Z_{12} & 0 & -Z_{23} & \cdots & -Z_{2,n}\\
Z_{13} & Z_{23} & 0 & \cdots & -Z_{3,n}\\
\cdots & \cdots & \cdots & \cdots & \cdots\\
Z_{1,n} & Z_{2,n} & Z_{3,n} & \cdots & 0
\end{matrix}\\
\hline
\begin{matrix}
0 & -Y_{12} & -Y_{13} & \cdots & -Y_{1,n}\\
Y_{12} & 0 & -Y_{23} & \cdots & -Y_{2,n}\\
Y_{13} & Y_{23} & 0 & \cdots & -Y_{3,n}\\
\cdots & \cdots & \cdots & \cdots & \cdots\\
Y_{1,n} & Y_{2,n} & Y_{3,n} & \cdots & 0
\end{matrix} & \vline & \begin{matrix}
D_{n+1} & -X_{12} & -X_{13} & \cdots & -X_{1,n}\\
-X_{21} & D_{n+2} & -X_{23} & \cdots & -X_{2,n}\\
-X_{31} & -X_{32} & D_{n+3} & \cdots & -X_{3,n}\\
\cdots & \cdots & \cdots & \cdots & \cdots\\
-X_{n,1} & -X_{n,2} & -X_{n,3} & \cdots & D_{2n}
\end{matrix}
\end{pmatrix}
\]
where $D_i=(2n-1)I+H_i$ and $D_{n+i}=(2n-1)I-H_i$ for $i\leq n$.
\end{lemma}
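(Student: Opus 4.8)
The plan is to exploit the coproduct structure of the Casimir. Writing $\rho$ for the defining $2n$-dimensional representation, each generator acts on $\mathbb{C}^{2n}\otimes\mathbb{C}^{2n}$ by $\rhoCC(A)=\rho(A)\otimes I+I\otimes\rho(A)$, so that
\[
\rho(\Omega)=\sum_i\rhoCC(A_i)\rhoCC(A^i)=\sum_i\bigl(\rho(A_i)\rho(A^i)\otimes I+I\otimes\rho(A_i)\rho(A^i)\bigr)+\sum_i\bigl(\rho(A_i)\otimes\rho(A^i)+\rho(A^i)\otimes\rho(A_i)\bigr).
\]
I would call the first group the \emph{pure} terms and the second the \emph{cross} terms. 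Each pure term is $\rho(\Omega)$ acting on a single copy of $\mathbb{C}^{2n}$; since $\Omega$ is central and the vector representation is irreducible, Schur's lemma forces $\sum_i\rho(A_i)\rho(A^i)=c_0 I$ for a scalar $c_0$, so the pure terms contribute exactly $2c_0(I\otimes I)$.

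To pin down $c_0$ I would apply $\sum_i\rho(A_i)\rho(A^i)$ to the standard basis vector $e_1$ (the highest weight vector). Using $H_k e_1=\delta_{k1}e_1$, the dual-basis factors $\tfrac{1}{4n-4}$, and the pairings $X_{kl}X_{lk}e_1=\delta_{k1}e_1$ (with $X_{lk}X_{kl}e_1=0$) together with the analogous $Y,Z$ computation, one finds $c_0=\tfrac{2n-1}{4n-4}$. This is precisely the constant needed: $2c_0(I\otimes I)=\tfrac{1}{2n-2}(2n-1)(I\otimes I)$, which supplies the $(2n-1)I$ sitting inside each diagonal block $D_a$.

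For the cross terms, the key simplification is $\sum_i A_i\otimes A^i=\sum_i A^i\otimes A_i$, valid because the Killing form is symmetric and hence the dual basis of $\{A^i\}$ is again $\{A_i\}$; one checks this directly against the listed duals, since the $X_{kl}/X_{lk}$ and $Y_{kl}/Z_{kl}$ pairs each enter symmetrically. Thus the cross terms equal $2\sum_i\rho(A_i)\otimes\rho(A^i)$, and inserting the explicit duals gives $\tfrac{1}{2n-2}\bigl[\sum_k\rho(H_k)\otimes\rho(H_k)+\sum_{k<l}(\cdots)-\sum_{k<l}(\cdots)\bigr]$, where the $\tfrac{1}{4n-4}$ of the dual basis combines with the factor $2$ to produce the overall prefactor $\tfrac{1}{2n-2}$. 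I would then organize $\mathbb{C}^{2n}\otimes\mathbb{C}^{2n}$ into $2n$ blocks indexed by the first tensor factor, so that the $(a,a')$ block of $\rho(A)\otimes\rho(B)$ is $\rho(A)_{a,a'}\rho(B)$, and read off which single basis element has a nonzero $(a,a')$ entry. For the diagonal blocks only the $H_k$ survive, and $\sum_k\rho(H_k)_{a,a}\rho(H_k)$ equals $H_a$ for $a\le n$ and $-H_{a-n}$ for $a>n$, which combines with $2c_0 I$ to give exactly $\tfrac{1}{2n-2}D_a$.

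The main obstacle is the sign and index bookkeeping in the off-diagonal blocks. Using $X_{kl}=E_{k,l}-E_{n+l,n+k}$, $Y_{kl}=E_{k,n+l}-E_{l,n+k}$, and $Z_{kl}=E_{n+k,l}-E_{n+l,k}$, I would verify that for $a=i,a'=j\le n$ only $X_{ij}$ survives, giving the block $X_{ji}$ (the reversed indices coming from the dual); that for $a=i\le n,\,a'=n+j$ only $Y$ survives, yielding $-Z_{ij}$ when $i<j$ and $+Z_{ji}$ when $i>j$; that the transposed super-block yields the corresponding $Y$-entries; and that for $a=n+i,\,a'=n+j$ the relevant matrix element of $X$ flips sign, giving $-X_{ij}$. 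Matched against the diagonal-block computation above, these reproduce the four super-blocks of the claimed matrix exactly. The only real care is in tracking the two independent sources of signs: the $-1$ in the lower-right entries of the root matrices, and the minus signs built into the duals of $Y$ and $Z$.
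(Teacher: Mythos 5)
Your proposal is correct --- I checked the key claims: the Casimir eigenvalue $c_0=\frac{2n-1}{4n-4}$ on the vector representation, the flip-symmetry $\sum_i A_i\otimes A^i=\sum_i A^i\otimes A_i$, and the block-by-block sign bookkeeping all come out right and reproduce the stated matrix --- but it takes a genuinely different route from the paper. The paper never splits $\rho(\Omega)$ into pure and cross terms. Instead it partitions the Cartan--Weyl basis into the Cartan generators and the quadruples $\{X_{ij},X_{ji},Y_{ij},Z_{ij}\}$, writes each summand of the Casimir explicitly in block form (a $6\times 6$ block pattern for the $X$ and $Y,Z$ families, a $4\times 4$ pattern for the $H_i$'s), and adds up the diagonal contributions by hand: $2(n-1)I$ per diagonal block from the $X$'s, another $2(n-1)I$ from the $Y,Z$'s, and $2I\pm 2H_i$ from the $H_i$'s, totaling $\frac{1}{2n-2}\left[(2n-1)I\pm H_i\right]$ after rescaling. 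Your argument replaces all of that diagonal bookkeeping with one invocation of Schur's lemma: the pure terms are two copies of the Casimir acting on the irreducible vector representation, hence contribute $2c_0(I\otimes I)$, with $c_0$ computed once on a highest weight vector (or quoted from the formula $(\lambda,\lambda+2\rho)$). What you gain is conceptual economy and generality --- the constant $2n-1$ is identified as the Casimir eigenvalue, and your decomposition works verbatim for $\Delta(\Omega)$ in the tensor square of any irreducible representation; what the paper gains is a completely elementary computation (no irreducibility or Schur's lemma needed) whose intermediate matrices make the location and sign of every entry visible at a glance. The cross-term analysis --- identifying the unique basis element that survives in each off-diagonal block and tracking the two sign sources --- is essentially the same computation in both proofs. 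One small imprecision: your parenthetical ``$X_{lk}X_{kl}e_1=0$'' is valid only under the convention $k<l$ (so that $l\neq 1$); for arbitrary $k\neq l$ it would contradict the preceding identity by relabeling, though this does not affect your count of $n-1$ or the value of $c_0$.
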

\begin{proof}[Proof]

We can partition the Cartan Weyl basis for \sotwon \ into disjoint subsets according to pairs $(i,j)$ for $i<j\leq n$, keeping the Cartan subalgebra generators ($H_i$ matrices) separate. Each of the subsets will, for a pair $(i,j)$, contain $X_{ij}$, $X_{ji}, Y_{ij}, Z_{ij}$.

Suppose we take an arbitrary part of the partitioned basis, i.e. choose $(i,j)$ such that $i<j\leq n$. We will analyze the contribution this part (specifically, the $X_{ij}$, $X_{ji}$, $Y_{ij}$, and $Z_{ij}$) makes to the Casimir representation.

Each of these matrices can have their representation written in block form, which places the matrix on each of the diagonals and places $I$'s, the identity matrix, in the locations indicated by the matrix. For example, in \sofour:
\[
\rho_{\mathbb{C}^4\otimes\mathbb{C}^4}(X_{12})=X_{12}\otimes I_4 + I_4 \otimes X_{12} =\begin{pmatrix}
X_{12} & I & 0 & 0\\
0 & X_{12} & 0 & 0\\
0 & 0 & X_{12} & 0\\
0 & 0 & -I & X_{12}\\
\end{pmatrix}.
\]
Take $\rhoCC(X_{ij})$ and $\rhoCC(X_{ji})$ using this form. Ignoring the $\frac{1}{4n-4}$ scaling, we want to analyze $\rhoCC(X_{ij})\rhoCC(X_{ji})+\rhoCC(X_{ji})\rhoCC(X_{ij})$. In order to do so, we will analyze a $6\times 6$ block matrix which captures the contributions to the $i,j,k,n+i,n+j,n+k$ block rows and columns, where $k\leq n$ such that $k\neq i,j$.
Letting $P_{ij}=X_{ij}X_{ji}+X_{ji}X_{ij}$, this results in the following matrix:
\[
\rhoCC(X_{ij})\rhoCC(X_{ji})+\rhoCC(X_{ji})\rhoCC(X_{ij})=\begin{pmatrix}
P_{ij}+I & 2X_{ji} & 0 & 0 & 0 & 0\\
2X_{ij} & P_{ij}+I & 0 & 0 & 0 & 0\\
0 & 0 & P_{ij} & 0 & 0 & 0\\
0 & 0 & 0 & P_{ij}+I & -2X_{ij} & 0\\
0 & 0 & 0 & -2X_{ji} & P_{ij}+I & 0\\
0 & 0 & 0 & 0 & 0 & P_{ij}
\end{pmatrix}.
\]
Notice that $P_{ij}$ is a diagonal matrix with 1 on the $i,j,n+i,n+j$ diagonals and 0 elsewhere. The block form of the identity matrix also has an $I$ added to the $P_{ij}$ at the $i,j,n+i,n+j$ rows. This means that, in the full $\rho(\Omega)$ matrix, the $X$ terms contribute $2(n-1)I$ to each diagonal term. Notice also that $X_{ij}$ and $X_{ji}$ appear in blocks of $\rho(\Omega)$ at locations determined by $i,j,n+i,n+j$.

Likewise, take $\rhoCC(Y_{ij})$ and $\rhoCC(Z_{ij})$ using this form. Note that the coefficient of the dual basis elements will be negative ($\frac{-1}{4n-4}$), meaning these matrices will be negated before being included in the Casimir sum. Analyzing this matrix the same way as the $X$'s, and letting $R_{ij}$ be $Y_{ij}Z_{ij}+Z_{ij}Y_{ij}$, we get the result:
\[
\rhoCC(Y_{ij})\rhoCC(Z_{ij})+\rhoCC(Z_{ij})\rhoCC(Y_{ij})=\begin{pmatrix}
R_{ij}-I & 0 & 0 & 0 & 2Z_{ij} & 0\\
0 & R_{ij}-I & 0 & -2Z_{ij} & 0 & 0\\
0 & 0 & R_{ij} & 0 & 0 & 0\\
0 & 2Y_{ij} & 0 & R_{ij}-I & 0 & 0\\
-2Y_{ij} & 0 & 0 & 0 & R_{ij}-I & 0\\
0 & 0 & 0 & 0 & 0 & R_{ij}
\end{pmatrix}.
\]
Notice that $R_{ij}$ is a diagonal matrix with $-1$ on the $i,j,n+i,n+j$ diagonals and 0 elsewhere. The block form of the identity matrix also has an $I$ subtracted from $R_{ij}$ at the $i,j,n+i,n+j$ rows. This means that, in the full $\rho(\Omega)$ matrix, since this matrix is negated, the $Y,Z$ terms contribute $2(n-1)I$ to each diagonal term. Notice also that $Y_{ij}$ and $Z_{ji}$ appear in locations determined by $i,j,n+i,n+j$, and again note that these will be negated in the final result.

In summary, we have shown that from pairs of $(i,j)$ in the $X,Y,Z$ elements of the basis, we generate each off-diagonal block based on one root vector and an equal value of $4(n-1)I$ in each diagonal block.

To fully analyze a block form, we must also analyze the contributions of the $H$ matrices to $\rho(\Omega)$. However, these matrices are diagonal, so their representations will also be diagonal and they will only contribute to the diagonal blocks. We can define a block representation of $H_i$, which puts $H_i$ in each diagonal block, adds $I$ in the $i$-th diagonal block, and subtracts $I$ in the $n+i$-th diagonal block.

Define a $4\times 4$ matrix indexed by $i,j,n+i,n+j$ such that $j\neq i$ and $j\leq n$. The result is the following matrix (up to scaling):
\[
\rhoCC(H_i)\rhoCC(H_i)=\begin{pmatrix}
H_i^2+2H_i+I & 0 & 0 & 0\\
0 & H_i^2 & 0 & 0\\
0 & 0 & H_i^2-2H_i+I & 0\\
0 & 0 & 0 & H_i^2
\end{pmatrix}.
\]
Adding this up for all $i$ shows that this contributes differently to each diagonal. Let $d_i$ be the $i$-th diagonal block of $\sum\limits_i\rhoCC(H_i)\rhoCC(H_i)$. Then $d_i=2I+2H_i$ and $d_{n+i}=2I-2H_i$ for $i\leq n$.

When we put all of these pieces together, add the scaling back in (putting $\frac{1}{4n-4}$ on the outside of the matrix, as this was the coefficient in every case), and pull a constant of 2 out of the matrix (leading to $\frac{1}{2n-2}$ as the coefficient), we get the block form in the lemma.
\end{proof}

\begin{lemma}\label{lem:GnMarkov}
The $G_n$ matrix is the generator of a Markov process.
\end{lemma}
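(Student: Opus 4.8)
The plan is to verify the three defining properties of a continuous-time Markov generator for $G_n$: every row sums to zero, every diagonal entry is non-positive, and every off-diagonal entry is non-negative. Two of these are essentially built into the construction. By the choice $c_i = \sum_j \rho(\Omega)_{ij}$, the matrix $H = \rho(\Omega) - C$ has all row sums equal to zero, and since multiplying a row whose entries sum to zero by $-1$ leaves that sum at zero, $G_n$ inherits zero row sums. The non-positivity of the diagonal is exactly what the final negation step enforces. So the entire content of the lemma is the non-negativity of the off-diagonal entries after the negations.

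First I would reduce this to a single sign claim: \emph{in each row of $\rho(\Omega)$, all off-diagonal entries have one sign} (all $\ge 0$ or all $\le 0$). Granting this, the zero-row-sum relation forces the diagonal entry to have the opposite sign, so the rows the construction negates are precisely those whose off-diagonals are all $\le 0$; negating such a row turns its off-diagonals into non-negative numbers and its diagonal into a non-positive one, while leaving the already-correct rows untouched. Rows with all off-diagonal entries zero are isolated $1\times 1$ zero blocks and need no attention. Since $H$ and $\rho(\Omega)$ share the same off-diagonal entries, it suffices to prove the sign claim for $\rho(\Omega)$ itself.

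To prove the sign claim I would use the block form of Lemma \ref{lem:block}. The diagonal blocks $D_i = (2n-1)I \pm H_i$ are themselves diagonal matrices, so they contribute only to the diagonal of the full $4n^2 \times 4n^2$ matrix; every off-diagonal entry of $\rho(\Omega)$ therefore comes from one of the root-vector blocks $\pm X_{ij}, \pm Y_{ij}, \pm Z_{ij}$, each contributing only entries equal to $\pm\tfrac{1}{2n-2}$. I would then fix a row, indexed by a block $p$ and a within-block position $u$, and track where the single $+1$ and single $-1$ of each root vector land, quadrant by quadrant (for $p \le n$ the top-left $X$-blocks and the top-right $Z$-blocks, and symmetrically for $p > n$ the bottom-left $Y$-blocks and bottom-right $X$-blocks). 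The outcome to be checked is that each row falls into exactly one of three types: a row carrying a single $+\tfrac{1}{2n-2}$ (the ``hopping'' rows, e.g. $(p,u)$ with $u \le n$, $u \ne p$), a row carrying several $-\tfrac{1}{2n-2}$ entries (the rows $(p,n+p)$ and $(n+p,p)$), or a zero row (the rows $(p,p)$ and $(n+p,n+p)$, corresponding to $v_p \otimes v_p$). In no case does a $+1$ coexist with a $-1$ in the same row, which is exactly the sign claim; positivity of the common factor $\tfrac{1}{2n-2}$ for $n \ge 2$ then finishes the argument.

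The main obstacle is precisely this per-row bookkeeping: one must confirm, using the explicit entries of $X_{i,j} = E_{i,j} - E_{n+j,n+i}$, $Y_{i,j} = E_{i,n+j} - E_{j,n+i}$, and $Z_{i,j} = E_{n+i,j} - E_{n+j,i}$ together with the signs attached to each block in Lemma \ref{lem:block}, that the $\pm1$ entries deposited into a common row by different blocks never collide with opposite signs. The cleanest way to organize this finite but sign-sensitive check is by the four quadrants and by whether the within-block position $u$ is $\le n$ or $> n$, since each root vector places one of its two nonzero entries in the ``upper'' index range and the other in the ``lower'' range in a predictable way, so that a fixed row only ever receives entries of a single type.
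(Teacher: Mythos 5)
Your proposal is correct and takes essentially the same route as the paper: both arguments note that zero row sums and non-positive diagonals are built into the construction, and then verify non-negativity of the off-diagonal entries by inspecting the block form of Lemma \ref{lem:block}, classifying rows into zero rows, rows carrying a single positive entry, and the rows $(p,n+p)$, $(n+p,p)$ whose $2n-2$ negative entries are repaired by the negation step. The only difference is organizational: you let the zero-row-sum property automatically identify the negated rows as exactly those with all non-positive off-diagonals, whereas the paper explicitly checks that those rows have diagonal entry $2n-2$ (so $c_i=0$ and the row is purely negated, not shifted) --- the underlying sign bookkeeping is the same.
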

\begin{proof}[Proof]

By the procedure used to construct the $G_n$ from $\rho(\Omega)$, it immediately follows that all of the rows (in the normal sense) of the resulting matrix sum to 0 and their diagonals are non-positive. Therefore it suffices to show that all off-diagonal entries of are non-negative.

Using the block form of $\rho(\Omega)$, we see that the set of rows of $\rho(\Omega)-C$ with negative elements is given by $S=\{n+1,n+1+1(2n+1),n+1+2(2n+1),\dots,2n^2,2n^2+1,2n^2+1+1(2n+1),\dots,4n^2-n\}$. This constitutes the $n+1$ row of the 1st block, the $n+2$ row of the 2nd block., etc., up to the $2n$ row of the $n$-th block, and then the 1st row of the $n+1$-th block, the 2nd row of the $n+2$-th block, etc. We also notice that each of these rows is given a negative value by every block that is not on the diagonal or the 0 diagonals of the upper right and bottom left. This means that each of these rows contains $2n-2$ off diagonal negative elements, all of which will be $-1$, because they come from root vectors.

We want to show that the diagonal value in each row of $S$ is equal to $2n-2$, which would imply that these rows already sum to 0 and thus will solely be negated, not adjusted by a constant. This is true because, in the $D_i$'s, $2n-2$ will occur at the $n+1$ row, and then every $2n+1$ rows up to $2n^2$. In the $D_{n+i}$'s, $2n-2$ will occur at the $2n^2+1$ row and then every $2n+1$ rows after that up to $4n^2-n$. Thus, the rows indexed by $S$ will have a 0 on that diagonal of $C$ and will be the rows negated at the end.

Note that all other rows of $C$ will have non-negative diagonals and non-negative off-diagonals, which will force the diagonals to become negative when $C$ is subtracted and thus the off-diagonals will remain positive.

Hence, all off-diagonal entries are non-negative, and thus the matrix $G_n$ is a generator matrix of a Markov process.
\end{proof}

Because $G_n$ is a $4n^2 \times 4n^2$ matrix, we can see that $G_n$ will represent a Markov process with $4n^2$ states.

\begin{lemma}\label{lem:eqoffdiag}
All non-zero off-diagonal entries of $G_n$ are equal.
\end{lemma}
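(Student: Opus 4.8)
The plan is to exploit the explicit block description of $\rho(\Omega)$ furnished by \cref{lem:block} and to track precisely where off-diagonal entries can come from. First I would observe that every \emph{diagonal} block of $\rho(\Omega)$ is itself a diagonal matrix: indeed $D_i = (2n-1)I + H_i$ and $D_{n+i} = (2n-1)I - H_i$, and since $H_i = E_{i,i} - E_{n+i,n+i}$ is diagonal, each $D_i$ is diagonal. Consequently no off-diagonal entry of the full $4n^2 \times 4n^2$ matrix $\rho(\Omega)$ can originate from a diagonal block; every off-diagonal entry must come from one of the off-diagonal blocks, each of which is $\pm\tfrac{1}{2n-2}$ times a single root vector $X_{ij}$, $Y_{ij}$, or $Z_{ij}$.

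Next I would record that each root vector has entries only in $\{0,\pm 1\}$: for instance $X_{i,j} = E_{i,j} - E_{n+j,n+i}$, and similarly $Y_{i,j} = E_{i,n+j} - E_{j,n+i}$ and $Z_{i,j} = E_{n+i,j} - E_{n+j,i}$. Since distinct off-diagonal blocks occupy disjoint sets of positions in the full matrix, no off-diagonal entry receives a contribution from more than one root-vector entry. Hence every off-diagonal entry of $\rho(\Omega)$ lies in $\{0,\pm\tfrac{1}{2n-2}\}$. Because $C$ is diagonal, subtracting it alters only diagonal entries, so the off-diagonal entries of $\rho(\Omega) - C$ coincide exactly with those of $\rho(\Omega)$, again lying in $\{0,\pm\tfrac{1}{2n-2}\}$.

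Finally, $G_n$ is obtained from $\rho(\Omega)-C$ by negating certain rows; negation flips the sign of each entry of a row but leaves its absolute value unchanged, so every off-diagonal entry of $G_n$ still lies in $\{0,\pm\tfrac{1}{2n-2}\}$. By \cref{lem:GnMarkov} all off-diagonal entries of $G_n$ are non-negative, which forces every nonzero off-diagonal entry to equal exactly $\tfrac{1}{2n-2}$, as claimed.

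I expect the only genuine subtlety to be the bookkeeping in the first two steps, namely verifying that the diagonal blocks contribute nothing off-diagonal and that distinct off-diagonal blocks never collide on the same entry of the full matrix. Once that is in hand, the uniform magnitude $\tfrac{1}{2n-2}$ together with the sign normalization supplied by \cref{lem:GnMarkov} makes the equality of all nonzero off-diagonal entries immediate.
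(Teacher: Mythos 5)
Your proposal is correct and follows essentially the same route as the paper's own proof: both invoke \cref{lem:block} to locate all off-diagonal entries in the off-diagonal blocks, note that the root vectors $X_{ij}, Y_{ij}, Z_{ij}$ have entries only in $\{0,\pm 1\}$ (so every off-diagonal entry has magnitude $\tfrac{1}{2n-2}$ or $0$), and then use the non-negativity from \cref{lem:GnMarkov} to pin every nonzero off-diagonal entry at exactly $\tfrac{1}{2n-2}$. Your write-up merely makes explicit a few steps the paper leaves implicit (the diagonal blocks being diagonal, the disjointness of block positions, and the invariance of absolute values under the row negations and the subtraction of $C$).
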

\begin{proof}[Proof]
It follows from Lemma \ref{lem:block} that all off-diagonal entries of $G_n$ occur in the off-diagonal blocks, which are determined solely by the root vectors. However, the root vectors contain entries of only $1,0,-1$. Since all off-diagonal elements are non-negative, this implies that all nonzero off-diagonal entries of the matrix are the same and equal to $\frac{1}{2n-2}$.
\end{proof}

This lemma is useful because it means that the diagonals will follow the same pattern as the rows in regards to what states they represent, and since all off-diagonal entries are equal, the diagonal of a row captures what kind of state it represents in a Markov process.

\subsubsection{Expected Properties of the Markov process from \texorpdfstring{$G_n$}{Gn}}

In this section, we will use the properties of $G_n$ to show what states the Markov process that $G_n$ describes would need to include.

\begin{lemma}\label{lem:GenAbsorb}
The Markov process with generator $G_n$ has $2n$ absorbing states.
\end{lemma}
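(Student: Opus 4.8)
The plan is to identify the absorbing states explicitly as the ``diagonal'' basis vectors $v_p \otimes v_p$ of $\mathbb{C}^{2n}\otimes\mathbb{C}^{2n}$ and then to count them. Recall that a state is absorbing for a continuous--time Markov chain exactly when the corresponding row of the generator has no positive off--diagonal entry; since the rows of $G_n$ sum to $0$ and, by Lemma \ref{lem:GnMarkov}, all off--diagonal entries are non--negative, this is equivalent to the entire row (diagonal entry included) being zero. Because $C$ is diagonal and $G_n$ is obtained from $\rho(\Omega)-C$ by negating whole rows, the positions of the vanishing off--diagonal entries of $G_n$ coincide with those of $\rho(\Omega)$. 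Hence it suffices to determine, for each row of $\rho(\Omega)$, whether all of its off--diagonal entries vanish.

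First I would index the rows of $\rho(\Omega)$ by pairs $(p,q)$ with $1\le p,q\le 2n$, where $(p,q)$ corresponds to $v_p\otimes v_q$ and, in the presentation of Lemma \ref{lem:block}, $p$ labels the block row while $q$ labels the row within that block. The key structural input is that the diagonal blocks $D_i=(2n-1)I\pm H_i$ are themselves diagonal matrices, so every off--diagonal entry of $\rho(\Omega)$ lives in an off--diagonal block, and each off--diagonal block is either zero or a signed copy of one of the root vectors $X_{i,j},Y_{i,j},Z_{i,j}$. Thus the off--diagonal entries occurring in row $(p,q)$ are precisely the entries of the $q$-th row of the off--diagonal blocks $B_{p,p'}$ with $p'\ne p$, and the state $(p,q)$ is absorbing if and only if the $q$-th row of every such block is zero.

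Next I would record the supports of the root vectors: $X_{a,b}=E_{a,b}-E_{n+b,n+a}$ is supported on rows $a$ and $n+b$, whereas $Y_{a,b}$ is supported on rows $a,b$ and $Z_{a,b}$ on rows $n+a,n+b$. Feeding this into the block layout of Lemma \ref{lem:block} and taking the union over all off--diagonal blocks in a fixed block row $p$, one finds that the set of within--block rows $q$ receiving a nonzero entry is exactly $\{1,\ldots,2n\}\setminus\{p\}$. This should be checked in the two regimes separately: for $p\le n$ the relevant blocks are the $X_{\cdot,p}$ in the top--left strip (contributing rows $\{1,\ldots,n\}\setminus\{p\}$ together with $n+p$) and the $Z$--blocks in the top--right strip (contributing $\{n+1,\ldots,2n\}$), while for $p=n+p_0>n$ they are the $Y$--blocks (contributing all of $\{1,\ldots,n\}$) and the $-X$--blocks (contributing $\{p_0\}\cup(\{n+1,\ldots,2n\}\setminus\{n+p_0\})$); in each case the union omits precisely the index $p$. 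Consequently $q=p$ is the unique within--block row free of off--diagonal entries, so $v_p\otimes v_p$ is the only absorbing state in block row $p$.

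Finally I would conclude that the absorbing states are exactly $v_p\otimes v_p$ for $1\le p\le 2n$, giving $2n$ of them, and note that the diagonal entry of $G_n$ at such a state is also $0$ (forced by the vanishing off--diagonal entries together with the zero row--sum), so these are genuine absorbing states rather than rows that merely happen to be negated in the construction. I expect the only real work to be the bookkeeping in the third step — correctly reading off which root vector sits in each off--diagonal block and which of its rows are nonzero across the four combinations of $p\le n$ versus $p>n$ and block type — but this becomes routine once the support patterns of $X_{i,j},Y_{i,j},Z_{i,j}$ have been recorded.
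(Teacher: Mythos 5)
Your proof is correct and follows essentially the same route as the paper: both reduce absorbing states to rows of $\rho(\Omega)$ whose off--diagonal entries all vanish, and both read this off from the block form of Lemma \ref{lem:block}. The only difference is thoroughness — the paper checks the first row explicitly and then appeals to the recurrence of the diagonal value $\frac{2n}{2n-2}$ (once per $D$ block) together with Lemma \ref{lem:eqoffdiag}, whereas you carry out the root--vector support bookkeeping for every block row, which has the added merit of making explicit that the diagonal states $v_p \otimes v_p$ are the \emph{only} absorbing states.
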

\begin{proof}
First, note that an absorbing state is indicated in a generator matrix by a row where all entries are 0.

Notice from block form that the first row of the generator matrix has $0$'s for all off-diagonal elements (the first row of all $X_{j1}$ and $Z_{1j}$ is all 0's). This means that the first diagonal entry of $C$ will be equal to the first diagonal entry of $\rho(\Omega)$, which is $\frac{2n}{2n-2}$ and this will return an entire row of 0's.

Moreover, because every off-diagonal entry is positive and all nonzero off-diagonals are equal, this implies that every time the value $\frac{2n}{2n-2}$ recurs in the diagonal, it implies an absorbing state. By the structure of the $D$ matrices, this occurs once in every $D$ block, of which there are $2n$ total, implying $2n$ rows of all 0's. In a generator matrix, these types of rows imply absorbing states, showing that the Markov Process represented by this generator matrix has $2n$ absorbing states.
\end{proof}

\begin{definition}\label{def:maxrow}
We define a \textbf{maximal choice row} as a row in which no other rows have a greater number of nonzero off-diagonal elements. The set of all maximal choice rows is called the \textbf{maximal choice set}.
\end{definition}

\begin{lemma}\label{GenMaxrow}
The generator $G_n$ has $2n$ maximal choice rows, each of which have $2n-2$ elements.
\end{lemma}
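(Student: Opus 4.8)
The plan is to count, for each of the $4n^2$ rows of $G_n$, the number of nonzero off-diagonal entries, and to show this count is maximized --- with value $2n-2$ --- by exactly $2n$ rows. By Lemma \ref{lem:eqoffdiag} every nonzero off-diagonal entry of $G_n$ equals $\frac{1}{2n-2}$, and the passage from $\rho(\Omega)$ to $G_n$ only subtracts the diagonal matrix $C$ and negates whole rows; neither operation changes the \emph{number} of nonzero off-diagonal entries in a row. Hence it suffices to perform the count directly on the block form of $\rho(\Omega)$ given in Lemma \ref{lem:block}.

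The key structural observation is that the diagonal blocks $D_i$ are diagonal matrices, so they contribute nothing off the main diagonal; every off-diagonal entry therefore comes from an off-diagonal block, and each such block is a single root vector ($X_{ij}$, $Y_{ij}$, or $Z_{ij}$, up to sign) or is zero. Each root vector has exactly two nonzero entries, lying in two distinct rows with one nonzero entry apiece; explicitly, $X_{j,i}$ is supported on rows $j$ and $n+i$, the block $Y_{i,j}$ on rows $i$ and $j$, and $Z_{i,j}$ on rows $n+i$ and $n+j$. Consequently, if I index a full row of $\rho(\Omega)$ by a block-row $I \in \{1,\dots,2n\}$ together with a within-block position $a \in \{1,\dots,2n\}$, the number of nonzero off-diagonal entries in that row equals the number of off-diagonal blocks in block-row $I$ whose $a$-th row is nonzero.

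I would then carry out a short case analysis on $I$ and $a$. For a top block-row $I \le n$, the off-diagonal blocks are the $X_{JI}$ (for $J \le n$, $J \ne I$) and the $\pm Z_{Ij}$ (for $j \ne I$); matching the supports listed above against $a$ shows the count is $2n-2$ precisely when $a = n+I$ (every one of the $n-1$ blocks $X_{JI}$ and every one of the $n-1$ blocks $Z_{Ij}$ is supported on row $n+I$), is $0$ when $a = I$ (an absorbing row), and is $1$ otherwise. A symmetric computation for a bottom block-row $I = n+i$, whose off-diagonal blocks are the $\pm Y_{iJ}$ and the $-X_{ij}$, gives count $2n-2$ exactly when $a = i$, count $0$ when $a = n+i$, and count $1$ otherwise. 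Thus every row has off-diagonal count in $\{0, 1, 2n-2\}$, so $2n-2$ is the maximum, attained by exactly the $2n$ rows $(I, n+I)$ for $1 \le I \le n$ together with $(n+i, i)$ for $1 \le i \le n$, which is precisely the claim.

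The bulk of the work --- and the only place errors can creep in --- is the bookkeeping in this case analysis: one must correctly read off the support (set of nonzero rows) of each root vector occurring in a given block-row and test it against the position $a$, keeping the block coordinates and within-block coordinates straight. No step is conceptually deep, but this index juggling is where care is required.
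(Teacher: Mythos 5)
Your proof is correct and takes essentially the same approach as the paper: a row-by-row count of nonzero off-diagonal entries using the block form of Lemma \ref{lem:block}, with the bound $2n-2$ coming from the fact that each block-row has one diagonal block $D_i$ and one zero block, and every root-vector block meets a given row in at most one entry. The only difference is one of completeness --- you compute the exact count ($0$, $1$, or $2n-2$) for every row, whereas the paper cites Lemma \ref{lem:GnMarkov} for the rows of $S$ and merely asserts that all other rows have fewer entries, so your case analysis supplies the detail the paper leaves implicit.
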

\begin{proof}[Proof]
Let $S=\{n+1,n+1+1(2n+1),n+1+2(2n+1),\dots,2n^2,2n^2+1,2n^2+1+1(2n+1),\dots,4n^2-n\}$. We want to show $S$ is the maximal choice set. Recall from Lemma \ref{lem:GnMarkov} that each row of $S$ has $2n-2$ off-diagonal elements. Because each block of block form, as given by a root vector, has at most 1 entry in a row, and each block row has a 0 block, $2n-2$ is the maximal number of choices a row can have. This means that every element of $S$ is a maximal choice row.

Moreover, the structure of the block form matrix shows that every other row has less than $2n-2$ off-diagonal entries, implying that $S$ is the maximal choice set. Because $S$ has cardinality $2n$, there are $2n$ maximal choice rows.

By a similar process to finding the rows of $G_n$ with negative elements, thus populating $S$, and taking advantage of symmetry within the matrix, we can see that negative off-diagonals also only occur in columns indicated by elements of $S$. This implies that rows of $S$ only communicate to other rows of $S$, making $S$ a communicating class. Moreover, since each root vector has only one negative off-diagonal and using the structure of the block matrix, each row in $S$ fails to immediately reach a unique other row (based on the location of the 0 matrix), but all $2n-2$ other states in $S$ can be reached.
\end{proof}

\begin{definition}\label{def:pairrow}
We define a \textbf{pairwise row} as a row with exactly one nonzero off-diagonal entry. If pairwise row $r$ has its nonzero off-diagonal entry at column $s$, and $s$ is a pairwise row with nonzero off-diagonal entry at column $r$, rows $r$ and $s$ are called \textbf{pairwise states}.
\end{definition}

\begin{lemma}\label{lem:elsepair}
Any row in $G_n$ is either an absorbing state, or a maximal choice row, or a pairwise state.
\end{lemma}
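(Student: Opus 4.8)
The plan is to prove the trichotomy by a counting argument that leverages the two preceding lemmas, which already pin down the absorbing and maximal-choice rows. Since the three categories are distinguished purely by the number of nonzero off-diagonal entries in a row --- namely $0$ for an absorbing state, $2n-2$ for a maximal choice row, and $1$ for a pairwise row --- and these three values are distinct for all $n \ge 2$, it suffices to show that \emph{every} row of $G_n$ has exactly $0$, $1$, or $2n-2$ nonzero off-diagonal entries, and then to supply the mutual-pairing property needed to upgrade a pairwise row to a genuine pairwise state.

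First I would record a total count of the nonzero off-diagonal entries of $G_n$. By Lemma \ref{lem:block}, every off-diagonal entry lives inside an off-diagonal block, each such block is (up to sign) a single root vector, and each root vector $X_{ij}, Y_{ij}, Z_{ij}$ has exactly two nonzero entries, both equal to $\pm 1$. Counting off-diagonal blocks quadrant by quadrant yields $4(n^2-n)$ of them, since the diagonal blocks are either the diagonal matrices $D_i$ or zero; hence $G_n$ has exactly $8n(n-1)$ nonzero off-diagonal entries in total. Combining this with Lemma \ref{lem:GenAbsorb} (which gives $2n$ absorbing rows, contributing $0$ entries) and Lemma \ref{GenMaxrow} (which gives $2n$ maximal choice rows contributing $2n(2n-2)=4n(n-1)$ entries), the remaining $4n^2-4n = 4n(n-1)$ rows collectively carry $8n(n-1)-4n(n-1) = 4n(n-1)$ off-diagonal entries. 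The key observation is that each remaining row is non-absorbing and therefore carries \emph{at least} one off-diagonal entry: a row of a generator sums to zero with nonpositive diagonal, so a row with no off-diagonal entry is identically zero. With $4n(n-1)$ entries distributed among $4n(n-1)$ rows each holding at least one, every such row holds exactly one, i.e. each is a pairwise row.

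The step that requires the most care, and which I expect to be the main obstacle, is upgrading ``pairwise row'' to ``pairwise state,'' i.e. verifying that the two rows of each pair point at one another. For this I would first establish that $G_n$ has a \emph{symmetric support pattern}: the $(r,s)$ entry is nonzero if and only if the $(s,r)$ entry is nonzero. This follows from the block-transpose relations $X_{ba}^{T} = X_{ab}$ and $Z_{pq}^{T} = -Y_{pq}$, which show that in all four quadrants the block in position $(a,b)$ and the block in position $(b,a)$ have transpose supports. Given symmetric support, a pairwise row $r$ whose single entry sits in column $s$ forces the $(s,r)$ entry to be nonzero, so it only remains to exclude $s$ being absorbing or maximal. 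An absorbing row has an empty column by symmetric support, so no row can point into it; and since Lemma \ref{GenMaxrow} shows the maximal rows communicate only among themselves, the columns of the maximal rows already receive all $4n(n-1)$ of their entries from maximal rows, leaving no pairwise row able to point into a maximal column. Hence $s$ is itself a pairwise row, and its unique entry must lie in column $r$, which closes the mutual pairing.

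An alternative to this global argument, which I would keep in reserve as a fully self-contained fallback, is a direct case analysis on the within-block position of each row using the explicit root-vector entries from Lemma \ref{lem:block}. Parametrizing each row by a block index $A$ and a within-block index $i$, one checks that row $(A,i)$ has $0$, $1$, or $2n-2$ off-diagonal entries according to whether $i$ matches the absorbing, generic, or maximal position, and the pairwise rows are exhibited to pair as $(A,i)\leftrightarrow(i,A)$ explicitly. I expect this to be more computational but to require no counting input, and it doubles as an independent verification of the entry totals used above.
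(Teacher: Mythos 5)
Your proof is correct, but it takes a genuinely different route from the paper's. The paper argues positionally and locally: using the block form of Lemma \ref{lem:block}, it observes that within each block row the positive entries of the blocks $X_{ks}$, $Z_{ts}$, $-Z_{sq}$ (and their bottom-half analogues) land in \emph{distinct} rows, partitioning the non-absorbing, non-maximal rows, so each such row has exactly one positive off-diagonal entry; it then exhibits the mutual pairing explicitly in four coordinate cases ($X_{ji}\leftrightarrow X_{ij}$, $-X_{ij}\leftrightarrow -X_{ji}$, $Y_{ij}\leftrightarrow -Z_{ij}$, $Z_{ij}\leftrightarrow -Y_{ij}$). You instead run a global pigeonhole: the $4n(n-1)$ off-diagonal blocks carry $8n(n-1)$ nonzero entries in total, the absorbing and maximal rows of Lemmas \ref{lem:GenAbsorb} and \ref{GenMaxrow} account for $4n(n-1)$ of them, and since every non-absorbing row carries at least one entry, the remaining $4n(n-1)$ rows each carry exactly one. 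Your symmetric-support observation ($X_{ba}^{T}=X_{ab}$, $Z_{pq}^{T}=-Y_{pq}$, so $\rho(\Omega)$ has symmetric support, a property inherited by $G_n$) is the coordinate-free form of the paper's four cases, and your exclusion of absorbing and maximal columns correctly closes the mutual pairing, using the fact, established in the proof of Lemma \ref{GenMaxrow}, that maximal rows communicate only among themselves. The trade-off: your argument avoids all coordinate bookkeeping and is shorter, but it leans on the preceding lemmas giving \emph{exact} counts (exactly $2n$ zero rows, exactly $2n$ maximal rows with exactly $2n-2$ entries); if any of those counts were merely lower bounds, your pigeonhole would not close, whereas the paper's positional argument is self-contained and, as a bonus, identifies explicitly which row pairs with which. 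Your stated fallback (case analysis on within-block position) is essentially the paper's proof.
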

\begin{proof}
Let $r$ be a row of $G_n$ that is not an absorbing state or a maximal choice row. Because $r$ is not an absorbing state, it must have at least one off-diagonal element. All of the negative off-diagonal elements of $\rho(\Omega)$ are used in maximal choice rows, and thus we wish to study the positive off-diagonal elements of $\rho(\Omega)$.

First, we show that $r$ is a pairwise row. If $r$ is in the top half of $G_n$, $r$ is in a block row $s$ made up of three types of matrices: $X_{ks}$ for $k\leq n$, $Z_{ts}$ for $t<s$, and $-Z_{sq}$ for $q>s$. Each $X_{ks}$ matrix will have its positive element in row $k$, each $Z_{ts}$ will have its positive element in row $n+t$, and each $-Z_{sq}$ will have its positive element in row $n+q$. This forms a partition of all of the non-absorbing and non-maximal rows in the top half of the matrix, so $r$ only contains one off-diagonal positive element. Similarly, we can show that if $r$ is in the bottom half of the block form matrix, $r$ only contains one off-diagonal positive element. Thus, $r$ is a pairwise row.

Next, we know that for each $r$, its one off-diagonal element is in a block determined by a root vector. We will show that this root vector implies a corresponding pairwise row $r'$ that makes $r,r'$ pairwise states. This can be done in 4 cases:

\begin{enumerate}
    \item If $r$ is a row that has its positive entry in the block $X_{ji}$, then it has positive entry at matrix coordinates $(j+(i-1)2n,i+(j-1)2n)$. The block $X_{ij}$ has its positive entry at matrix coordinates $(i+(j-1)2n,j+(i-1)2n)$. If $r'$ is the row that has its positive entry in block $X_{ij}$, then $r$ and $r'$ are pairwise states.
    \item If $r$ is a row that has its positive entry in block $-X_{ij}$, then it has positive entry at matrix coordinates $(2n^2+(i-1)2n+n+i,2n^2+(j-1)2n+n+j)$. The block $-X_{ji}$ has its positive entry at matrix coordinates $(2n^2+(j-1)2n+n+j,2n^2+(i-1)2n+n+i)$. If $r'$ is the row that has its pairwise entry in block $-X_{ji}$, then $r$ and $r'$ are pairwise states.
    \item If $r$ is a row that has its positive entry in block $Y_{ij}$, then it has positive entry at matrix coordinates $(2n^2+(j-1)2n+i,(i-1)2n+n+j)$. The block $-Z_{ij}$ has its positive entry at matrix coordinates $((i-1)2n+n+j,2n^2+(j-1)2n+i)$. If $r'$ is the row that has its pairwise entry in block $-Z_{ij}$, then $r$ and $r'$ are pairwise states.
    \item If $r$ is a row that has its positive entry in block $Z_{ij}$, then it has its positive entry at matrix coordinates $((j-1)2n+n+i,2n^2+(i-1)2n+j)$. $-Y_{ij}$ as its positive entry at matrix coordinates $(2n^2+(i-1)2n+j,(j-1)2n+n+i)$. If $r'$ is the row that has its pairwise entry in $-Y_{ij}$, then $r$ and $r'$ are pairwise states.
\end{enumerate}
Hence, we see that every row that is not absorbing or maximal choice has exactly one positive off-diagonal and communicates to and from exactly one other row.
\end{proof}

We have seen that the rows of $G_n$ imply a Markov Process that can be split up into completely independent communicating classes, i.e. no state in a communicating class can reach a state in another communicating class. We have $2n$ communicating classes with 1 row (absorbing states), 1 communicating class with $2n$ rows such that each row can reach all others but 1 uniquely (maximum choice rows), and $2n^2-2n$ communicating classes of pairwise states, totaling $4n^2$ states. We will now turn our focus to the Type-$m$ Parallel SSEP and show that these criteria apply to this particle system.

\subsection{The Particle System}

\begin{definition}\label{def:parallel}
A \textbf{Parallel SSEP with N sites} is a system that has two separate $1$-dimensional SSEPs with $N\ge2$ sites each. Each site can either be empty or have a particle on it, and while particles can interact with neighboring sites on the same lattice, they cannot jump to the other lattice. A Parallel SSEP with 2 sites will be referred to as a \textbf{Basic Parallel SSEP}.
\end{definition}

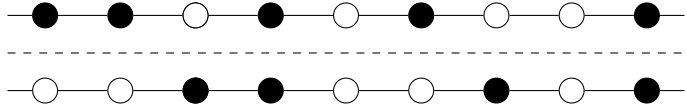
\begin{figure}[ht]
    \centering
    \begin{tikzpicture}[scale=1]
    \node[circle,draw,fill=black](leftleftpart) at (-4,0){};
    \node[circle,draw,fill=black](rightrightpart) at (4,0){};
    \draw (-4.5,0)--(leftleftpart);
    \draw (rightrightpart)--(4.5,0);
    \node[circle,draw,fill=black](leftpart) at (-3,0){};
    \draw (leftleftpart)--(leftpart);
    \node[circle,draw,fill=black](rightpart) at (-2,0){};
    \draw (leftpart)--(rightpart);
    \node[circle,draw,fill=white](leftpart) at (-2,0){};
    \draw (rightpart)--(leftpart);
    \node[circle,draw,fill=black](rightpart) at (-1,0){};
    \draw (leftpart)--(rightpart);
    \node[circle,draw,fill=white](leftpart) at (0,0){};
    \draw (rightpart)--(leftpart);
    \node[circle,draw,fill=black](rightpart) at (1,0){};
    \draw (leftpart)--(rightpart);
    \node[circle,draw,fill=white](leftpart) at (2,0){};
    \draw (rightpart)--(leftpart);
    \node[circle,draw,fill=white](rightpart) at (3,0){};
    \draw (leftpart)--(rightpart);
    \draw (rightpart)--(rightrightpart);
    \draw[dashed] (-4.5,-0.5)--(4.5,-0.5);
    \node[circle,draw,fill=white](leftleftpart) at (-4,-1){};
    \node[circle,draw,fill=black](rightrightpart) at (4,-1){};
    \draw (-4.5,-1)--(leftleftpart);
    \draw (rightrightpart)--(4.5,-1);
    \node[circle,draw,fill=white](leftpart) at (-3,-1){};
    \draw (leftleftpart)--(leftpart);
    \node[circle,draw,fill=white](rightpart) at (-2,-1){};
    \draw (leftpart)--(rightpart);
    \node[circle,draw,fill=black](leftpart) at (-2,-1){};
    \draw (rightpart)--(leftpart);
    \node[circle,draw,fill=black](rightpart) at (-1,-1){};
    \draw (leftpart)--(rightpart);
    \node[circle,draw,fill=white](leftpart) at (0,-1){};
    \draw (rightpart)--(leftpart);
    \node[circle,draw,fill=white](rightpart) at (1,-1){};
    \draw (leftpart)--(rightpart);
    \node[circle,draw,fill=black](leftpart) at (2,-1){};
    \draw (rightpart)--(leftpart);
    \node[circle,draw,fill=white](rightpart) at (3,-1){};
    \draw (leftpart)--(rightpart);
    \draw (rightpart)--(rightrightpart);
    \end{tikzpicture}
    \caption{A possible configuration of a Parallel SSEP with $9$ sites}
    \label{Parallel SSEP}
\end{figure}
\begin{definition}\label{def:TypemBasic}
A \textbf{Type-m Basic Parallel SSEP} is a Basic Parallel SSEP where the lower lattice allows particles of mass $1$, the upper lattice allows particles of mass $\omega\in\{\frac{1}{m},\frac{2}{m},\dots,\frac{m-1}{m},1\}$, and the following properties hold:
\begin{itemize}
    \item \textbf{Mass Order Property}: A particle can only move if no lighter particles can move.
    \item \textbf{Balance Property}: A set of
    balanced states exists in which the two lattices each have mass $1$ and particles are able to undergo fusion and fission, defined respectively as donating mass to or taking mass from a neighboring site. These mass-preserving processes allow a concurrent move in the lower lattice.
    \item \textbf{Class Property}: A particle in a non-balanced state is able to switch places with neighboring particles of higher mass.
\end{itemize}
\end{definition}

\begin{definition}\label{def:maxstates}
We define \textbf{maximum-choice states} as the states such that no other possible states in the system have more \textbf{choices}, states that can be immediately reached. In a Type-m Basic Parallel SSEP, the maximum-choice states are called \textbf{cyclic states} and form a communicating class, called a \textbf{cycle}.
\end{definition}

\begin{lemma}\label{lem:typembasicstates}
A Type-$m$ Basic Parallel SSEP has $4(m+1)^2$ states.
\end{lemma}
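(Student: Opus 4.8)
The plan is to count the states directly, exploiting the fact that the two SSEPs making up a Basic Parallel SSEP evolve on disjoint site-sets and never exchange particles. Consequently a state is determined independently by the configuration on the lower lattice and the configuration on the upper lattice, and the total count factors as a product of the two counts.

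First I would count configurations of the lower lattice. It carries only mass-$1$ particles and has $N=2$ sites, each of which is either empty or holds a single mass-$1$ particle by the exclusion rule, giving $2$ choices per site and hence $2^2 = 4$ configurations. Next I would count configurations of the upper lattice: each of its $2$ sites is either empty or holds a single particle whose mass lies in $\{1/m, 2/m, \ldots, (m-1)/m, 1\}$, a set of $m$ distinct values, so there are $m+1$ occupation states per site and $(m+1)^2$ configurations in total. Multiplying the independent counts yields $4 \cdot (m+1)^2 = 4(m+1)^2$. As a consistency check, setting $m = n-1$ recovers the $4n^2$ states of the generator $G_n$ established above.

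The only subtlety — and hence the point I would argue most carefully — is that the dynamical rules of Definition \ref{def:TypemBasic} (the Mass Order Property, the Balance Property with its fusion/fission moves, and the Class Property) restrict only which transitions are allowed, not which configurations are admissible. In particular, the fusion/fission moves couple the two lattices at the level of the dynamics but impose no global conservation law that would delete any product configuration: the balanced states, in which each lattice carries total mass $1$, together with the non-balanced states referenced in the Class Property, exhaust all product configurations. Once this is confirmed, the product count is exact and the lemma follows.
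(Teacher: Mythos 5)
Your proof is correct and matches the paper's own argument: the paper likewise counts $(m+1)^2$ configurations for the top lattice, $4$ for the bottom lattice, and multiplies, asserting (as you argue in more detail) that every combination of top and bottom configurations is a valid state. Your explicit justification that the dynamical rules constrain transitions rather than admissible configurations is a welcome strengthening of the step the paper merely asserts, but it is not a different approach.
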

\begin{proof}[Proof]
By the definition of Type-$m$ Basic Parallel SSEP, the particle system contains 2 parallel latices with two sites each. The top lattice consists of 2 sites with $m+1$ configurations each ($m$ types of particles or empty). Thus, the top lattice has $(m+1)^2$ configurations.

Similarly, because the bottom lattice consists of 2 sites with 2 states each, there are 4 possible configurations of the bottom lattice. Since all possible combinations of a configuration of the top lattice and a configuration of the bottom lattice are valid states, there are $4(m+1)^2$ states.
\end{proof}

\begin{lemma}\label{lem:typembasicabsorb}
A Type-$m$ Basic Parallel SSEP has $2(m+1)$ absorbing states.
\end{lemma}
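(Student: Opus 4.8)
The plan is to characterize the absorbing states explicitly and then count them. Recall from Lemma \ref{lem:typembasicstates} that a state of the Type-$m$ Basic Parallel SSEP consists of a lower-lattice configuration on two sites (each site empty or carrying a mass-$1$ particle) together with an upper-lattice configuration on two sites (each empty or carrying a particle of mass $k/m$ for some $1 \le k \le m$). A state is absorbing precisely when no transition of \emph{any} type is geometrically available, i.e. when its row in the generator is identically zero. I would first record the observation that the Mass Order Property only governs which available move fires, never whether one exists, so for the purpose of detecting absorbing states the priority rule may be ignored: I only need to ask whether any simple-exclusion move, any fusion/fission move, or any class swap is possible.

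I would then prove that a state is absorbing if and only if its two sites carry identical single-site configurations, equivalently the lower lattice is $00$ or $11$ and the upper lattice is either empty on both sites or occupied on both sites by a common mass. The forward implication is a direct verification: for such a state each lattice is uniform, so no exclusion move is available; since the lower mass equals $0$ or $2$ the state is never balanced, so the Balance Property yields no fusion/fission; and since the two upper masses are equal (or both absent) the Class Property yields no swap, the lower lattice carrying only the single mass $1$ and particles being forbidden to cross lattices. Hence no move is available and the state is absorbing.

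For the converse I would argue by contrapositive, exhibiting an available move whenever the two sites differ. If the lower lattice is $01$ or $10$, an empty lower site is adjacent to an occupied one and the lower exclusion move is available. Otherwise the lower lattice is $00$ or $11$; if the upper lattice has exactly one occupied site the upper exclusion move is available, and if both upper sites are occupied by distinct masses the Class Property lets the lighter upper particle swap with its heavier neighbor. In each case a transition exists, so the state is not absorbing; combining the two directions pins down the absorbing set exactly. The count is then immediate: an absorbing state is specified by an independent choice of lower configuration ($00$ or $11$, giving $2$ options) and common upper configuration (empty, or one of the $m$ masses, giving $m+1$ options), yielding $2(m+1)$ states, consistent with the $2n$ absorbing states of $G_n$ from Lemma \ref{lem:GenAbsorb} under the identification $n = m+1$.

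The main subtlety, and the step I would be most careful about, is the interaction with the Mass Order and Balance Properties in the converse direction. I must make sure that a move declared ``geometrically available'' genuinely forces a nonzero transition rate and is not suppressed by the priority rule, and that every candidate absorbing state is genuinely non-balanced so that no fusion/fission sneaks in; I also need the no-cross-lattice constraint to rule out an upper particle swapping with a neighboring mass-$1$ lower particle, which is precisely what keeps the $L=11$ uniform-upper states absorbing. Isolating the principle that the mere existence of an available move at any priority level already prevents absorption is what makes the argument clean.
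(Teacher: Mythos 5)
Your proposal is correct and follows essentially the same route as the paper: identify the absorbing states as exactly those where each lattice carries a pair of identical single-site configurations (both empty or both occupied by a common mass), then count $2$ choices for the lower lattice times $m+1$ for the upper. The paper simply asserts this characterization and counts, whereas you additionally verify both directions (no move is available in such states, and a move exists in every other state), which fills in details the paper leaves implicit.
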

\begin{proof}
Absorbing states are reached by having each lattice occupied by either a pair of identical particles or a pair of empty sites. There are $m+1$ ways to accomplish this on the top lattice, and independently $2$ ways to accomplish this on the bottom lattice. Thus, there are $2(m+1)$ absorbing states.
\end{proof}

\begin{lemma}\label{lem:typembasiccyclic}
A Type-$m$ Basic Parallel SSEP has $2(m+1)$ cyclic states.
\end{lemma}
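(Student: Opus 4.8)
The plan is to show that the cyclic states of a Type-$m$ Basic Parallel SSEP coincide exactly with the \emph{balanced} states of Definition~\ref{def:TypemBasic}, and then to count the balanced states combinatorially. This mirrors the structure of the proofs of Lemmas~\ref{lem:typembasicstates} and \ref{lem:typembasicabsorb}, and the resulting count of $2(m+1)$ matches the $2n$ maximal choice rows of $G_n$ found in Lemma~\ref{GenMaxrow} under the identification $n = m+1$.

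First I would characterize when both lattices carry total mass $1$. The bottom lattice admits only mass-$1$ particles, so its total mass is $1$ precisely when it contains a single particle, giving the two configurations $[\bullet,\emptyset]$ and $[\emptyset,\bullet]$. For the top lattice I would split into cases by the number of occupied sites: a single occupied site contributes total mass $1$ only if that particle has mass $1$, giving the two configurations $[1,\emptyset]$ and $[\emptyset,1]$; two occupied sites contribute total mass $1$ exactly when the masses are $k/m$ and $(m-k)/m$ for some $1 \le k \le m-1$, giving $m-1$ further configurations. Hence the top lattice has $2 + (m-1) = m+1$ mass-$1$ configurations, and since the two lattices evolve independently there are $2(m+1)$ balanced states in total.

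Next I would show that the balanced states are precisely the maximum-choice states. Using the Balance Property, from a balanced state each mass-preserving fusion/fission redistribution of the top lattice is coupled with a concurrent move of the bottom lattice; tallying these together with the swaps permitted by the Class Property, one checks that every balanced state has exactly $2m$ choices. I would then verify that any non-balanced state has strictly fewer than $2m$ available transitions: the Mass Order Property forbids a move whenever a lighter particle can still move, which throttles the number of simultaneously available transitions once the lattices are out of balance, while the absorbing states (Lemma~\ref{lem:typembasicabsorb}) and the pairwise states have only $0$ or $1$ choice. Since $2m = 2n-2$ is exactly the number of off-diagonal entries in a maximal choice row of $G_n$, this confirms that the balanced states are the cyclic states, and that they form a single communicating class as required by Definition~\ref{def:maxstates}.

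The main obstacle is the second step: rigorously showing that the balanced states strictly dominate all other states in the number of choices. This requires a careful case analysis of the transition rules (Mass Order, Balance, and Class) to enumerate the out-transitions from each class of configuration, rather than any single slick argument. By contrast, the combinatorial count of balanced states in the first step is routine, and is what ultimately delivers the value $2(m+1)$.
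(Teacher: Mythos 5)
Your proposal is correct and takes essentially the same route as the paper: both identify the cyclic states with the balanced states (lattices each at mass $1$) and then count the balanced states as $(m+1)\cdot 2 = 2(m+1)$, with identical combinatorics for the top and bottom lattices. The only real difference is that the step you flag as the main obstacle is dispatched in the paper by a one-sentence argument rather than case analysis: by the Mass Order Property, in any non-balanced (and non-absorbing) state only the lightest movable particle may move, and since each lattice has just two sites it has exactly one place to go, so every non-balanced state has at most one choice, while balanced states clearly have more than one --- no exact count of $2m$ choices is needed.
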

\begin{proof}
First we will prove that cyclic states only occur when the top and bottom lattice are equal at mass $1$ each. Begin by noting that if the lattices are equal at mass $1$, the state necessarily has more than $1$ choice. Assume then that the top and bottom lattice are not equal, for if they are equal at mass $2$ we are in an absorbing state. Then, if there is a movable particle of lowest mass it will only have $1$ choice of movement, thus the entire state has $1$ choice. Since cyclic states are our maximum-choice states, we then know that non-balanced states can not be cyclic.

Thus, cyclic states occur only when the mass of the top lattice and the mass of the bottom lattice are equal at $1$. This requires the two sites on the top lattice to have mass summing to $1$, and there are $m+1$ ways to arrange this. Independently, one particle of mass $1$ must be on one of the $2$ sites on the bottom lattice. Thus, there are $2(m+1)$ cyclic states.
\end{proof}

\begin{lemma}\label{lem:typembasicpair}
A state in the Type-$m$ Basic Parallel SSEP that is not absorbing or cyclic must be pairwise.
\end{lemma}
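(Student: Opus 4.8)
The plan is to fix a state $s$ of the Type-$m$ Basic Parallel SSEP that is neither absorbing nor cyclic and show that it has exactly one outgoing transition whose target transitions only back to $s$; this is precisely the content of being pairwise in the sense of Definition \ref{def:pairrow}, and the argument runs parallel to the proof of Lemma \ref{lem:elsepair}. The first reduction uses Lemma \ref{lem:typembasiccyclic}: the cyclic states are exactly the balanced ones (both lattices carrying total mass $1$), so a non-cyclic state is non-balanced, the fusion/fission moves of the Balance Property are unavailable, and the only admissible transitions are same-lattice SSEP hops into an adjacent empty site and the Class-Property swaps of a particle with a strictly heavier neighbor. A useful preliminary observation is that, because the bottom lattice admits only mass-$1$ particles, a Class swap can occur only between two top particles of distinct masses: a bottom--bottom swap is impossible (both masses equal $1$), and a cross-lattice swap would deposit a fractional mass on the bottom, which is forbidden. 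Hence every admissible move stays within a single lattice and preserves the total mass of each lattice. Since $s$ is not absorbing at least one such move exists, so only uniqueness remains.

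To prove uniqueness I would invoke the Mass Order Property to restrict attention to particles of minimal mass among those admitting a legal move, reducing the claim to the existence of a single (minimal-mass particle, legal move) pair. In a two-site lattice a fixed particle has exactly one neighbor, so it can hop (that neighbor empty) or swap (that neighbor strictly heavier) but not both, whence each particle contributes at most one move; uniqueness can therefore fail only if two particles of the same minimal mass are simultaneously movable. The key claim is that this cannot happen in a non-balanced, non-absorbing state. Two equal-mass top particles filling both top sites admit neither hop nor swap, so the only competing scenario is one movable top particle and one movable bottom particle of equal mass, which must then be mass $1$; but a movable mass-$1$ particle on a lattice forces that lattice to have total mass $1$, so both lattices would have mass $1$, i.e.\ the state is balanced, hence cyclic --- contradicting the hypothesis. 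This pins the minimal-mass move down to a single particle with a single destination.

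Finally I would establish reciprocity, mirroring the four root-vector cases of Lemma \ref{lem:elsepair}. Each admissible move (a top hop, a bottom hop, or a top swap) is an involution on configurations that preserves the mass of each lattice, so the target $s'$ shares the balance and matching status of $s$, admits the reverse move back to $s$, and --- rerunning the uniqueness argument on $s'$ --- has no other choice; thus $\{s, s'\}$ form genuine pairwise states. The step I expect to be the main obstacle is the systematic enumeration of the remaining configurations, in particular the half-balanced ones in which one lattice has total mass $1$ while the other does not (for instance a top lattice holding two distinct nonempty masses summing to $1$ above a matched bottom lattice). There the Class Property offers a swap on the balanced lattice while a hop might seem available elsewhere, and one must apply the Mass Order Property together with the within-lattice restriction above to confirm that exactly one transition survives. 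Making this bookkeeping airtight is what cleanly separates the pairwise states from the cyclic ones, since the analysis shows that the only configurations carrying two competing transitions are precisely the balanced (cyclic) states.
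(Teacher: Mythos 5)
Your proof is correct and follows essentially the same route as the paper's: by the Mass Order Property only the lightest movable particle can act, its move (hop or swap) is an involution, and it remains the lightest movable particle in the resulting state, so the two states switch back and forth as a pairwise pair. The paper's own proof is a four-line sketch that silently assumes the minimal-mass move is unique; your additional step --- ruling out two simultaneously movable particles of equal minimal mass by showing such a tie forces both lattices to have mass $1$, hence a balanced (cyclic) state --- fills exactly that gap, so it is a strengthening of the same argument rather than a different approach.
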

\begin{proof}[Proof]
Let $X$ be a state that is not absorbing or cyclic. $X$ cannot be the empty state, so it must have a movable particle of lowest mass. This particle will switch sites, resulting in the state $X'$. However, the movable particle of lowest mass from $X$ is still the movable particle of lowest mass in $X'$, and $X'$ will then switch back to $X$. Thus, $X$ is a pairwise state.
\end{proof}

\begin{theorem}\label{thm:basic}
Let $m=n-1$. The generator matrix of a Type-$m$ Basic Parallel SSEP is exactly $G_n$.
\end{theorem}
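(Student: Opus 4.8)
The plan is to prove that the two generators coincide by exhibiting an explicit bijection $\Phi$ between the $4n^2$ states of the Type-$(n-1)$ Basic Parallel SSEP and the standard tensor basis $\{v_a\otimes v_b : 1\le a,b\le 2n\}$ of $\mathbb{C}^{2n}\otimes\mathbb{C}^{2n}$ on which $G_n$ acts, and then checking that $\Phi$ carries the allowed transitions of one process onto those of the other. The key simplification is Lemma \ref{lem:eqoffdiag}: every nonzero off-diagonal entry of $G_n$ equals $\tfrac{1}{2n-2}$, and the diagonal is then forced by the zero-row-sum property of Lemma \ref{lem:GnMarkov}. Hence a continuous-time generator with these constant rates is completely determined by its transition graph (which ordered pairs of states communicate), and it suffices to assign each move of the Parallel SSEP the rate $\tfrac{1}{2n-2}$ and verify that $\Phi$ is an isomorphism of the two transition graphs. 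Under $m=n-1$, the counting Lemmas \ref{lem:typembasicstates}, \ref{lem:typembasicabsorb}, \ref{lem:typembasiccyclic} and \ref{lem:typembasicpair} already match the corresponding structural facts for $G_n$ (Lemmas \ref{lem:GenAbsorb}, \ref{GenMaxrow}, \ref{lem:elsepair}), so the remaining work is to promote these numerical coincidences into a genuine graph isomorphism.

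First I would build $\Phi$ one column at a time. A single column of the Basic Parallel SSEP is a pair (top site, bottom site), where the top site carries one of $n$ values (empty, or a mass $k/(n-1)$ for $1\le k\le n-1$) and the bottom site is empty or holds a mass-$1$ particle; this gives exactly $2n$ column configurations, which I identify with $v_1,\dots,v_{2n}$ through a bijection $\phi$, setting $\Phi=\phi\otimes\phi$. The map $\phi$ is pinned down by the weight structure: $v_a$ carries weight $\pm L_j$, and I match the $L_j$-index and the sign of this weight to the top-mass and bottom-occupation of the column, arranged so that \emph{complementary} columns (top masses summing to $1$, bottom occupations summing to $1$) correspond to the index pair $v_i\leftrightarrow v_{n+i}$. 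With this choice the absorbing states fall out immediately: the absorbing Parallel states are exactly those with identical columns (Lemma \ref{lem:typembasicabsorb}), which map to the diagonal vectors $v_a\otimes v_a$, and a direct reading of the diagonal blocks $D_a$ in Lemma \ref{lem:block} shows these are precisely the $2n$ rows where $G_n$ vanishes (Lemma \ref{lem:GenAbsorb}).

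Next I would match the remaining two classes. The cyclic (balanced) states, in which each lattice has total mass $1$, are exactly the complementary column pairs, so under $\Phi$ they land on the $2n$ states $\{v_i\otimes v_{n+i},\,v_{n+i}\otimes v_i : 1\le i\le n\}$; tracing the global indices shows this is precisely the maximal-choice set $S$ of Lemma \ref{GenMaxrow}. I would then check that the Balance Property dynamics (fusion/fission in the top lattice together with the forced concurrent bottom move) realizes exactly the $(2n-2)$-regular communication pattern of $S$, each cyclic state reaching all the others but the unique one dictated by the position of the zero block in the block form of Lemma \ref{lem:block}. For the pairwise states, Lemma \ref{lem:typembasicpair} says each non-absorbing non-cyclic state communicates with a single partner by swapping its lowest-mass movable particle, while Lemma \ref{lem:elsepair} describes the partner in $G_n$ through four root-vector cases ($X_{ij}$, $-X_{ij}$, $Y_{ij}$, $Z_{ij}$); I would verify case-by-case that the Class Property swap and the root-vector pairing agree under $\Phi$.

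The main obstacle is the cyclic/maximal-choice class. Matching the pairwise and absorbing parts is essentially bookkeeping once $\phi$ is fixed, but on the cycle one must check that the coupled fusion/fission moves reproduce both the correct edges and the correct single non-edge of $S$ for every state, and that the simultaneous bottom-lattice hop is consistent with the specific off-diagonal root vectors ($X$, $Y$, $Z$) recorded in Lemma \ref{lem:block}. The delicate point is therefore to choose $\phi$ so that the mass/occupation labels line up simultaneously with the $H_i$-weight (which fixes the diagonal of $\rho(\Omega)$) and with the root-vector action (which fixes the off-diagonal transitions); once such a $\phi$ is produced, the theorem follows from the finite verification that the three transition subgraphs coincide.
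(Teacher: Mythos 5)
Your proposal is correct, and at skeleton level it follows the same route as the paper: decompose both state spaces into absorbing, cyclic/maximal-choice, and pairwise states, and exploit the fact (Lemma \ref{lem:eqoffdiag}) that every nonzero off-diagonal rate equals $\tfrac{1}{2n-2}$, so that everything reduces to matching transition structure. The difference lies in how much actually gets proved. The paper's proof of Theorem \ref{thm:basic} is a three-sentence ``state analysis'': it cites the counting lemmas on each side, asserts that the structures correspond, and stops --- no bijection of states is ever written down, the adjacency pattern of the cyclic class (each of the $2n$ states reaching all others except one unique partner, established for $S$ in Lemma \ref{GenMaxrow}) is never verified on the SSEP side, and the pairwise partners are never matched against the four root-vector cases of Lemma \ref{lem:elsepair}. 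Your plan supplies precisely these missing pieces: the explicit reduction to a transition-graph isomorphism, the column map $\phi$ with $\Phi=\phi\otimes\phi$, and the edge-by-edge check on the cycle that you rightly single out as the crux. Your specific identifications also check out: $v_i\otimes v_{n+i}$ has global index $(2n+1)i-n$ and $v_{n+i}\otimes v_i$ has index $2n^2+(2n+1)i-2n$, which recovers the maximal-choice set $S$ exactly (for $n=2$ these are rows $3,8,9,14$, matching the $\mathfrak{so}_4$ example), and the absorbing SSEP states with identical columns map onto the diagonal vectors $v_a\otimes v_a$, which are the zero rows of $G_n$ by the structure of the blocks $D_a$ in Lemma \ref{lem:block}. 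So: same decomposition, same key lemmas, but your version is the rigorous one --- matching cardinalities of class types alone does not force two generators to agree up to relabeling of states, and the bijection-plus-adjacency verification you insist on is exactly what the paper's proof takes for granted. What the paper's brevity buys is brevity; what your outline buys is an argument that would actually close the theorem, at the cost of the finite (but genuine) case analysis on the cycle and the pairwise classes.
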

\begin{proof}
The previous lemmas demonstrate that the states implied in a Markov Process by the generator matrix, derived from a representation of a Casimir Element of \sotwon, are identical to the states in a Type-$m$ Basic Parallel SSEP. The $2n$ absorbing states in the matrix correspond to $2n$ absorbing states in the system, the $2n$ maximum choice rows (which form a communicating class) correspond to cyclic states (which are also a communicating class because mass is preserved), and everything else in both is split up into pairwise states. Hence, we conclude by state analysis that $G_n$ as a generator matrix represents Type-$m$ Basic Parallel SSEP.
\end{proof}

\subsubsection{An Example: \sofour}

The Cartan-Weyl basis for \sofour \ is $\{H_1,H_2,X_{12},X_{21},Y_{12},Z_{12}\}$. The Killing Form calculation implies that the dual basis for the Cartan-Weyl Basis is $\{\frac{1}{4}H_1,\frac{1}{4}H_2,\frac{1}{4}X_{21},\frac{1}{4}X_{12},-\frac{1}{4}Z_{12},-\frac{1}{4}Y_{12}\}$, corresponding to the order of the original Cartan-Weyl basis. Using the process described above, we arrive at a generator matrix for a 16-state Markov process.

The $16\times16$ generator matrix that results from \sofour is the following matrix:
\[
G=\begin{pmatrix}
\begin{smallmatrix}
0 & 0 & 0 & 0 & 0 & 0 & 0 & 0 & 0 & 0 & 0 & 0 & 0 & 0 & 0 & 0 \\
0 & -\half & 0 & 0 & \half & 0 & 0 & 0 & 0 & 0 & 0 & 0 & 0 & 0 & 0 & 0 \\
0 & 0 & -1 & 0 & 0 & 0 & 0 & \half & 0 & 0 & 0 & 0 & 0 & \half & 0 & 0 \\
0 & 0 & 0 & -\half & 0 & 0 & 0 & 0 & 0 & 0 & 0 & 0 & \half & 0 & 0 & 0 \\
0 & \half & 0 & 0 & -\half & 0 & 0 & 0 & 0 & 0 & 0 & 0 & 0 & 0 & 0 & 0 \\
0 & 0 & 0 & 0 & 0 & 0 & 0 & 0 & 0 & 0 & 0 & 0 & 0 & 0 & 0 & 0 \\
0 & 0 & 0 & 0 & 0 & 0 & -\half & 0 & 0 & \half & 0 & 0 & 0 & 0 & 0 & 0 \\
0 & 0 & \half & 0 & 0 & 0 & 0 & -1 & \half & 0 & 0 & 0 & 0 & 0 & 0 & 0 \\
0 & 0 & 0 & 0 & 0 & 0 & 0 & \half & -1 & 0 & 0 & 0 & 0 & \half & 0 & 0 \\
0 & 0 & 0 & 0 & 0 & 0 & \half & 0 & 0 & -\half & 0 & 0 & 0 & 0 & 0 & 0 \\
0 & 0 & 0 & 0 & 0 & 0 & 0 & 0 & 0 & 0 & 0 & 0 & 0 & 0 & 0 & 0 \\
0 & 0 & 0 & 0 & 0 & 0 & 0 & 0 & 0 & 0 & 0 & -\half & 0 & 0 & \half & 0 \\
0 & 0 & 0 & \half & 0 & 0 & 0 & 0 & 0 & 0 & 0 & 0 & -\half & 0 & 0 & 0 \\
0 & 0 & \half & 0 & 0 & 0 & 0 & 0 & \half & 0 & 0 & 0 & 0 & -1 & 0 & 0 \\
0 & 0 & 0 & 0 & 0 & 0 & 0 & 0 & 0 & 0 & 0 & \half & 0 & 0 & -\half & 0 \\
0 & 0 & 0 & 0 & 0 & 0 & 0 & 0 & 0 & 0 & 0 & 0 & 0 & 0 & 0 & 0 \\
\end{smallmatrix}
\end{pmatrix}
\]
As the generator of a Markov process, this matrix implies that there are 4 absorbing states (Rows 1, 6, 11,and 16), 4 states in a cycle (Rows 3, 8, 9, and 14). The other states are divided into pairs that switch back and forth (Rows 2 \& 5, 4 \& 13, 7 \& 10, and 12 \& 15). Note that $G$ can be rewritten as a block matrix based on the communicating classes of these states.

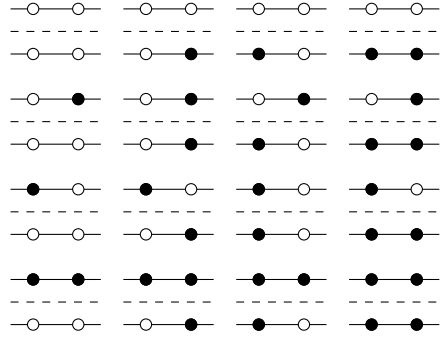
\begin{figure}[ht]
    \centering
    \begin{tikzpicture}[scale=.3]
    \foreach \x in {1,2,3,4} {
        \foreach \y in {1,2,3,4} {
        \draw[fill=white] (-1+5*\x,1+4*\y) circle[radius=.25]{};
        \draw[fill=white] (1+5*\x,1+4*\y) circle[radius =.25]{};
        \draw[-] (-2+5*\x,1+4*\y)--(-1.25+5*\x,1+4*\y);
        \draw[-] (-.75+5*\x,1+4*\y)--(.75+5*\x,1+4*\y);
        \draw[-] (1.25+5*\x,1+4*\y)--(2+5*\x,1+4*\y);
        \draw[dashed](-2+5*\x,0+4*\y)--(2+5*\x,0+4*\y);
        \draw[fill=white] (-1+5*\x,-1+4*\y) circle[radius=.25]{};
        \draw[fill=white] (1+5*\x,-1+4*\y) circle[radius =.25]{};
        \draw[-] (-2+5*\x,-1+4*\y)--(-1.25+5*\x,-1+4*\y);
        \draw[-] (-.75+5*\x,-1+4*\y)--(.75+5*\x,-1+4*\y);
        \draw[-] (1.25+5*\x,-1+4*\y)--(2+5*\x,-1+4*\y);}}
    \foreach \x in {1,2,3,4} {
        \draw[fill=black] (-1+5*\x,5) circle[radius=.25]{};
        \draw[fill=black] (1+5*\x,5) circle[radius=.25]{};
        \draw[fill=black] (-1+5*\x,5+4) circle[radius=.25]{};
        \draw[fill=black] (1+5*\x,5+8) circle[radius=.25]{};}
    \foreach \y in {1,2,3,4} {
        \draw[fill=black] (11,-1+4*\y) circle[radius=.25]{};
        \draw[fill=black] (14,-1+4*\y) circle[radius=.25]{};
        \draw[fill=black] (19,-1+4*\y) circle[radius=.25]{};
        \draw[fill=black] (21,-1+4*\y) circle[radius=.25]{};}
    \end{tikzpicture}
    \caption{16 Type-1 Basic Parallel SSEP Configurations}
    \label{so4 Particle System}
\end{figure}

The full list of states of the Type-1 Basic Parallel SSEP can be seen in Figure \ref{so4 Particle System}. It is straightforward to show that this particle system is generated by the generator derived before, as this particle system has four absorbing states, four states in a communicating class that can each immediately switch to two of the others but cannot immediately get to one of them, and eight states separated into four pairs that only switch back and forth, the same states that are suggested by $G$. Also, when a state has multiple choices it transitions to any one of them at an equal rate, as shown in the generator.

\subsection{Expansion of the Particle System to $N$ Sites}

\begin{definition}\label{def:subsys}
A \textbf{subsystem} of a Parallel SSEP with $N$ sites is a Basic Parallel SSEP formed from two of the $N$ adjacent sites. A Parallel SSEP with $N$ sites is made up of $N-1$ non-disjoint subsystems which define the local properties of the entire system.
\end{definition}

\begin{definition}\label{def:TypemN}
A \textbf{Type-m Parallel SSEP with N sites} is a Parallel SSEP with N sites such that every subsystem is a Type-m Basic Parallel SSEP.
\end{definition}

\begin{lemma}\label{lem:subchoices}
Let $S$ be the set of states of a Type-$m$ Basic Parallel SSEP such that the left-top site is fixed in state $a$ and the left-bottom site is fixed in state $b$. There are then $2(m+1)$ states in $S$ where $1$ state is absorbing, $1$ state is cyclic, and $2m$ states that are pairwise.
\end{lemma}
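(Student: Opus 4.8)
The plan is to fix the two left sites at states $a$ (top) and $b$ (bottom), enumerate the possible right sites, and then sort the resulting states using the characterizations of absorbing, cyclic, and pairwise states already established in Lemmas \ref{lem:typembasicabsorb}, \ref{lem:typembasiccyclic}, and \ref{lem:typembasicpair}. First I would count $|S|$: a state of the Basic Parallel SSEP is determined by its four sites (top-left, top-right, bottom-left, bottom-right), and since the first and third are fixed, a state of $S$ is determined by the pair (top-right, bottom-right). The top-right site has $m+1$ possible configurations and the bottom-right site has $2$, so $|S| = 2(m+1)$, as claimed.

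Next I would locate the absorbing state. By Lemma \ref{lem:typembasicabsorb}, a state is absorbing precisely when each lattice carries a pair of identical particles (or is empty on both sites), i.e. top-left $=$ top-right and bottom-left $=$ bottom-right. With the left sites fixed at $a$ and $b$, this forces top-right $= a$ and bottom-right $= b$, giving exactly one absorbing state in $S$.

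Then I would locate the cyclic state. By Lemma \ref{lem:typembasiccyclic}, a state is cyclic exactly when the total mass of the top lattice and the total mass of the bottom lattice both equal $1$. For the bottom lattice, whose site masses lie in $\{0,1\}$, the condition $\mathrm{mass}(b) + \mathrm{mass}(\text{bottom-right}) = 1$ forces the bottom-right site to be the unique complement of $b$ (empty if $b$ carries mass $1$, mass $1$ if $b$ is empty). For the top lattice, writing $\mathrm{mass}(a) = k/m$ with $k \in \{0,\dots,m\}$, the condition $\mathrm{mass}(a) + \mathrm{mass}(\text{top-right}) = 1$ forces $\mathrm{mass}(\text{top-right}) = (m-k)/m$, which is an admissible top-site configuration and is uniquely determined. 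Hence there is exactly one cyclic state in $S$. This state is distinct from the absorbing state found above, since the two disagree in the bottom-right coordinate: the absorbing state requires bottom-right $= b$, whereas the cyclic state requires bottom-right to be the complement of $b$, and $b$ never equals its own complement.

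Finally, having identified exactly one absorbing and exactly one cyclic state among the $2(m+1)$ states of $S$, Lemma \ref{lem:typembasicpair} shows that every remaining state, being neither absorbing nor cyclic, is pairwise. This leaves $2(m+1) - 2 = 2m$ pairwise states, completing the count. I expect no serious obstacle here; the only point requiring care is the mass bookkeeping in the cyclic case, namely checking that the forced value $(m-k)/m$ is always a legal top-site mass and that the absorbing and cyclic states can never coincide.
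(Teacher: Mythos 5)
Your proposal is correct and follows essentially the same route as the paper's proof: count the $2(m+1)$ states from the two free right-hand sites, identify the unique absorbing state (right sites matching $a$ and $b$) and the unique cyclic state (right sites the mass-complements of $a$ and $b$), then invoke Lemma \ref{lem:typembasicpair} to conclude the remaining $2m$ states are pairwise. Your added checks (that the complementary top mass is admissible and that the absorbing and cyclic states are distinct) are details the paper leaves implicit, but they do not change the argument.
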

\begin{proof}
Clearly, we have $2$ choices for the bottom-right site and $m+1$ choices for the top-right site giving us a total of $2(m+1)$ states. Now, note that we have one absorbing state: top-right in state $a$ and bottom-right in state $b$. We also have one cyclic state: top-right in state $1-a$ and bottom-right in state $0$ if $b=1$ or state $1$ if $b=0$. By Lemma \ref{lem:typembasicpair} the remaining states are pairwise and so we have $2m$ pairwise states.
\end{proof}

\begin{lemma}\label{lem:typemNmax}
A Type-$m$ Parallel SSEP with $N$ sites has $2(m+1)$ maximum-choice states
\end{lemma}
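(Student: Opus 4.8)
The plan is to show that the maximum-choice states of the $N$-site system are exactly those configurations in which \emph{every} subsystem (every adjacent pair of columns) is in its cyclic configuration, and then to count these ``globally cyclic'' states by propagating the local cyclic condition across the chain. The engine for both the characterization and the count is Lemma \ref{lem:subchoices}: once the left column of a Type-$m$ Basic Parallel SSEP is fixed, exactly one of its states is cyclic. Reading the chain from left to right, this means that if a global configuration is required to be cyclic on the subsystem occupying sites $i$ and $i+1$, then column $i+1$ is uniquely determined by column $i$.

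Concretely, I would first count the globally cyclic states. The leftmost column may be chosen freely: the top site has $m+1$ states (the $m$ particle masses or empty) and the bottom site has $2$ states (occupied or empty), for $2(m+1)$ possibilities. By Lemma \ref{lem:subchoices} applied to the subsystem on sites $1$ and $2$, the requirement that this subsystem be cyclic pins down column $2$ uniquely; column $2$ then serves as the fixed left column for the subsystem on sites $2$ and $3$, which forces column $3$, and so on. Hence the entire configuration is determined by its first column, giving exactly $2(m+1)$ globally cyclic states. Equivalently, these are the period-two ``alternating'' configurations: the balance condition forces $b_i + b_{i+1} = 1$ on the bottom lattice and masses $\omega_i + \omega_{i+1} = 1$ on the top, so both lattices are $2$-periodic and are pinned by the data in column $1$, which explains why the count is independent of $N$.

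The remaining and most delicate step, the main obstacle, is to prove that these globally cyclic states are precisely the maximum-choice states, that is, that making every subsystem cyclic genuinely maximizes the number of immediately reachable configurations and that no other state matches it. The difficulty is that the Mass Order Property is a \emph{global} constraint: only the lightest movable particles may move, so the number of choices is not simply additive over bonds. I would argue that in a globally cyclic (balanced) state every bond is active, with the fusion/fission concurrency of the Balance Property realizing a local cyclic move at each of the $N-1$ bonds, whereas any state possessing a non-cyclic subsystem has at least one bond at which the Mass Order Property suppresses a move, strictly lowering the count. Handling the two boundary subsystems (at sites $1$ and $N$), which admit fewer local moves, and verifying the strict domination of balanced states bond-by-bond are the technical points to nail down; once they are, the characterization holds and the count $2(m+1)$ from the previous step completes the proof.
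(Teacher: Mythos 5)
Your counting step is exactly the paper's: fix the first column ($2(m+1)$ possibilities, equivalently the $2(m+1)$ cyclic states of a Basic subsystem from Lemma~\ref{lem:typembasiccyclic}), then use Lemma~\ref{lem:subchoices} to propagate the cyclic condition uniquely across the chain, so the whole configuration is pinned by its first column. The gap is in the step you yourself flag as unresolved: you never prove that the maximum-choice states are exactly the globally cyclic ones, you only describe what such a proof would have to do (``verifying the strict domination of balanced states bond-by-bond'' and ``handling the two boundary subsystems'' are, in your own words, ``the technical points to nail down''). As written, the proposal is therefore incomplete: the characterization is asserted, its difficulties are enumerated, but neither the claimed suppression of moves at non-cyclic bonds nor any boundary analysis is carried out, and your count only applies once that characterization is in place.

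What closes this step in the paper is an additivity principle that you implicitly reject: the total number of choices of a state of the $N$-site system is the sum of the choices of its subsystems. This is not an extra theorem to be proved but a consequence of how the $N$-site process is defined (Definitions~\ref{def:subsys} and~\ref{def:TypemN}): the subsystems ``define the local properties of the entire system,'' i.e., the moves available to a global configuration are precisely the moves available in each bond viewed as a Type-$m$ Basic Parallel SSEP, and moves at distinct bonds yield distinct states since each move alters both sites of its bond. Granting this, the characterization is immediate: a bond contributes $2m$ choices if its subsystem is cyclic and at most $1$ otherwise (Lemmas~\ref{lem:subchoices} and~\ref{lem:typembasicpair}), so a state attains the maximum total iff every subsystem is cyclic, and your propagation count then finishes the proof. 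Your worry that the Mass Order Property acts globally (so that a light particle far away suppresses a move at another bond), and your concern about special behavior at boundary bonds, are reinterpretations of the dynamics that the paper's definitions do not support; if one did insist on such a globally constrained process, additivity would genuinely fail and the bond-by-bond domination argument you sketched would indeed be required --- and that is precisely the part you left unproven.
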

\begin{proof}
Since the total number of choices of the system is the sum of the choices of its subsystems, we know that our maximum-choice states are those such that every subsystem is cyclic. Let our first subsystem be cyclic, then by Lemma \ref{lem:subchoices} we know that there is only one way for the rest of the subsystems to be cyclic. Thus, the total number of maximum-choice states is simply the total number of cyclic states for a Type-$m$ Basic Parallel SSEP, which by Lemma \ref{lem:typembasiccyclic} is $2(m+1)$.
\end{proof}

\begin{lemma}\label{lem:typemNabsorb}
A Type-$m$ Parallel SSEP with $N$ sites has $2(m+1)$ absorbing states
\end{lemma}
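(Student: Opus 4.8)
The plan is to reduce the global question to a local one, in the same spirit as the proof of Lemma \ref{lem:typemNmax} for maximum-choice states. The key reduction I would establish first is: a configuration of the Type-$m$ Parallel SSEP with $N$ sites is absorbing if and only if every one of its $N-1$ subsystems is in an absorbing configuration of the Type-$m$ Basic Parallel SSEP. The only delicate point is that the dynamics is not simply a sum of independent local generators, because the Mass Order Property is a global rule that can suppress an otherwise-available move. This is handled by a single observation: the globally lightest movable particle is never blocked, since a move is suppressed only by a \emph{strictly} lighter movable particle and none exists below the minimum. Hence if any subsystem admits a potential move, the lightest movable particle actually moves and the state is not absorbing; conversely, if every subsystem is locally absorbing there are no potential moves anywhere. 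Thus global absorption is exactly equivalent to local absorption of all subsystems, and the priority rules play no role in characterizing absorbing states.

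Next I would translate the local condition via Lemma \ref{lem:typembasicabsorb}: a subsystem on sites $x$ and $x+1$ is absorbing precisely when each lattice, restricted to those two sites, is a pair of identical particles or a pair of empty sites. Writing the top and bottom occupations at site $x$ as $\eta^{\mathrm{top}}(x)$ and $\eta^{\mathrm{bot}}(x)$, this says $\eta^{\mathrm{top}}(x) = \eta^{\mathrm{top}}(x+1)$ and $\eta^{\mathrm{bot}}(x) = \eta^{\mathrm{bot}}(x+1)$. Imposing this for every adjacent pair $x = 1, \dots, N-1$ and using transitivity forces the top lattice to carry a single common value (one of the $m$ masses, or empty) across all $N$ sites, and forces the bottom lattice to be either entirely occupied or entirely empty.

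Finally I would count. The common top value may be any of the $m$ masses in $\{\frac1m, \dots, \frac{m-1}m, 1\}$ or empty, giving $m+1$ choices, and independently the bottom lattice is all-occupied or all-empty, giving $2$ choices. Each such pair of choices determines a unique absorbing configuration and every absorbing configuration arises this way, so there are $2(m+1)$ absorbing states. Equivalently, one may phrase the constancy as a propagation argument as in Lemma \ref{lem:typemNmax}: fixing the leftmost site is an $(m+1)\times 2$ choice, demanding the first subsystem be absorbing forces the second site to match, and inductively all $N$ sites match, putting the absorbing states in bijection with the states of the leftmost site.

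The only real obstacle is the first step, namely making precise that the Mass Order Property does not enlarge the absorbing set; the lightest-movable-particle remark resolves it. After that, the transitivity-and-count is identical to the maximum-choice computation and is entirely routine.
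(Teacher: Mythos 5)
Your proposal is correct and follows essentially the same route as the paper: the paper likewise reduces global absorption to every subsystem being absorbing, fixes the leftmost subsystem, and propagates to get a bijection with the $2(m+1)$ absorbing states of the Basic Parallel SSEP (Lemma \ref{lem:typembasicabsorb}). Your additional observation that the Mass Order Property cannot block the globally lightest movable particle is a nice point of rigor that the paper's one-line proof leaves implicit, but it does not change the structure of the argument.
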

\begin{proof}
The proof is equivalent to the proof of lemma \ref{lem:typemNmax}, with absorbing states substituted for maximum-choice and cyclic states.
\end{proof}

\begin{theorem}\label{thm:expansion}
Let $L$ be the generator matrix from Theorem \ref{thm:basic}, and let: 
\begin{equation}\label{eq:typemexpansion}
L_N=\sum_{i=1}^{N-1} \underbrace{I\otimes\dots\otimes I}_{i-1} \otimes L \otimes \underbrace{I\otimes\dots\otimes I}_{N-1-i}
\end{equation}
$L_N$ is then the generator of a Type-$m$ Parallel SSEP with $N$ sites.
\end{theorem}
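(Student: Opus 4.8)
The plan is to read equation \eqref{eq:typemexpansion} as the standard superposition of local (bond) generators and then check that each summand reproduces exactly the local rule prescribed by Definition \ref{def:TypemN}. To set this up, I would first fix the identification of $\mathbb{C}^{2n}$ with the $2n$ states of a single site of the Parallel SSEP: the $m+1=n$ configurations of the top lattice paired with the $2$ configurations of the bottom lattice. Under this identification $(\mathbb{C}^{2n})^{\otimes N}$ is the configuration space of the $N$-site system, and $L=G_n$ acts on $\mathbb{C}^{2n}\otimes\mathbb{C}^{2n}$ as the generator of a single subsystem (two adjacent sites) by Theorem \ref{thm:basic}. The goal is then to show that $L_N$ generates the process in which every subsystem (Definition \ref{def:subsys}) independently evolves as a Type-$m$ Basic Parallel SSEP, which is precisely Definition \ref{def:TypemN}.

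Next I would verify that $L_N$ is a genuine Markov generator and interpret its entries locally. The $i$-th summand $I^{\otimes(i-1)}\otimes L\otimes I^{\otimes(N-1-i)}$ has matrix entry $L_{(\eta_i,\eta_{i+1}),(\xi_i,\xi_{i+1})}$ when $\eta,\xi$ agree off sites $i,i+1$ and $0$ otherwise; since tensoring with $I$ preserves row sums and the off-diagonal sign pattern, and $L$ is a generator by Lemma \ref{lem:GnMarkov}, each summand has zero row sums and nonnegative off-diagonals. A finite sum of generators is again a generator, so $L_N$ is a generator matrix. Its off-diagonal entry at $(\eta,\xi)$ is the sum over the bonds $i$ on which $\eta$ and $\xi$ differ of the local rate $L_{(\eta_i,\eta_{i+1}),(\xi_i,\xi_{i+1})}$; because $L$ connects only configurations differing on one subsystem, $\eta$ and $\xi$ differ on at most one bond, so each transition of $L_N$ is exactly a local bond update at the rate dictated by $L$ (all equal to $\tfrac{1}{2n-2}$ by Lemma \ref{lem:eqoffdiag}). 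As a consistency check I would confirm that the aggregate state structure matches: the number of available transitions from a configuration is the sum of the per-subsystem choices, recovering the counts in Lemmas \ref{lem:typemNmax} and \ref{lem:typemNabsorb}.

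Since each summand implements the Basic Parallel SSEP dynamics on sites $(i,i+1)$ while leaving the remaining $N-2$ sites fixed via the identity factors, $L_N$ generates exactly the process in which each of the $N-1$ subsystems runs a copy of the Basic Parallel SSEP with rates superposed, completing the identification with Definition \ref{def:TypemN}. The step requiring the most care is making sure the per-subsystem ``Mass Order Property'' of Definition \ref{def:TypemBasic} is enforced only within each bond and is \emph{not} silently promoted to a global mass ordering: the superposition $L_N$ permits a heavier particle to move on one bond even when a lighter particle could move on a disjoint bond, which is exactly what the local definition prescribes, and this is visible because only allowed (nonzero) local transitions survive in each summand. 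I would also note that adjacent bonds share a site, but no cross-bond interaction is introduced, because each transition of $L_N$ alters a single subsystem; the shared site's updated value merely conditions the subsequent transitions on the neighboring bond, which is the intended local interaction rather than a simultaneous two-bond move.
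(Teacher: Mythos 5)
Your proposal is correct, but it proves the theorem by a different route than the paper does. The paper's proof is a combinatorial matching argument: it invokes Lemma \ref{lem:subchoices} and the diagonal multiset structure of $L$ (the pattern $\{0,-1,\dots,-1,-2m\}$ repeated $2(m+1)$ times, coming from Lemma \ref{lem:block}), and argues that the $i$-th tensor summand makes $(2(m+1))^{i-1}$ ``diagonal copies'' of $L$ corresponding to states of the system left of the $i$-th subsystem and $(2(m+1))^{N-1-i}$ copies for the states to its right, concluding by matching choice-counts subsystem by subsystem. You instead give the standard bond-superposition argument: identify $\mathbb{C}^{2n}$ with the $2(m+1)$ single-site states, verify that tensoring with identities preserves zero row sums and nonnegative off-diagonals so each summand (and hence the sum) is a generator, and then read off each nonzero off-diagonal entry of $L_N$ as a single-bond update at the rate prescribed by $L$. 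Your approach is more self-contained and arguably more rigorous as a proof that $L_N$ generates the claimed dynamics, while the paper's approach ties the theorem back to its earlier state-counting lemmas (absorbing, cyclic, and pairwise state counts) and verifies the global state structure rather than the transition mechanism. One small point you should tighten: your claim that a pair $\eta\neq\xi$ can be connected by at most one bond summand needs the observation that every nonzero off-diagonal entry of $L$ changes \emph{both} sites of its bond (visible from Lemma \ref{lem:block}, since all off-diagonal entries of $\rho(\Omega)$ lie in off-diagonal blocks at off-diagonal positions within the block); otherwise a transition altering only the shared site of two adjacent bonds could in principle collect contributions from both, and the ``unique bond'' reading of each transition would need to be replaced by additivity of rates.
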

\begin{proof}
To provide motivation for the proof, note by Lemma \ref{lem:subchoices} that if our first subsystem is in a given state, we then have $(2(m+1))^{N-2}$ possible states for the entire system. Since our first subsystem has $4(m+1)^2$ choices by Lemma \ref{lem:typembasicstates} we then get $(2(m+1))^N$ total states for our system as expected. This reflects the idea that we can obtain our system's states by starting with our choices for the first subsystem, and then working our way subsystem by subsystem across the rest of our system. This idea combined with the previously stated fact that the total choices for a state of our system is the sum of the choices of the subsystems gives us our ability to prove this theorem using \eqref{eq:typemexpansion}, the formula for $L_N$.

By Lemma \ref{lem:block}, our generator matrix $L$ has diagonal in the form of the multiset $\{0,\underbrace{-1, -1, \dots, -1}_{2m},-2m\}$ repeated $2(m+1)$ times. This can be interpreted as the $2(m+1)$ ways to choose the leftmost sites' states, where the multisets then represent the ways to finish the subsystem and their resulting choice-counts seen in Lemma \ref{lem:subchoices}. Now, we will show that the $i^{th}$ term in \eqref{eq:typemexpansion} is corresponding directly to the $i^{th}$ subsystem. We are first making $(2(m+1))^{i-1}$ diagonal copies of $L$ to reflect the $(2(m+1))^{i-1}$ possible states of the system to the left of the $i^{th}$ subsystem. We then take what we have so far and make $(2(m+1))^{N-1-i}$ diagonal copies of each of its diagonal elements to allow for the $(2(m+1))^{N-1-i}$ possible ways to choose the rest of the system given the $i^{th}$ and previous subsystems have been chosen, and we note that these paths are reflected by the multiset expected from Lemma \ref{lem:subchoices}. Thus, by summing up the terms of \eqref{eq:typemexpansion} we are essentially summing up the choices for each possible subsystem, and thus $L_N$ is indeed the generator of a Type-$m$ Parallel SSEP with $N$ sites.
\end{proof}

\section*{Appendix A: Python code}

The code below implements many of the calculations described in the paper, particularly in Section 2. Here is a summary of the main functions:
\begin{itemize}
\item The \texttt{pair} function takes in two lists of indices and outputs the result of the $q$-pairing, except without the $(q - q)^{-1}$ factors. (For example, an output of $[0, -2]$ corresponds to $\left(1 + \frac{1}{q^2}\right)$ times some power of $-(q - q^{-1})$. 
\item The \texttt{mat} and \texttt{dual} functions take a basis of $e'$s (and their respetcive $f'$s) as described in Section $3$, and they output $M$ and $M^{-1}$, respectively.
\item The \texttt{result} function computes a basis of $e'$s or $f'$s by enumerating a list of all possible permutations, using \texttt{reduce} to remove duplicates due to commuting elements, and using \texttt{perm} to confirm that the pairing matrix stays nonsingular.
\end{itemize}

The code runs down the left column completely before continuing in the right column.

\begin{tiny}
\begin{multicols}{2}
\begin{verbatim}
from sympy import *
import itertools
from collections import deque

n = 4
var('q')

def a(i, j): # Cartan matrix lookup
    if (i == j):
        return 2
    if (abs(i-j) == 1 and max(i, j) <= n-1):
        return -1
    if (i == n-2 and j == n or i == n and j == n-2):
        return -1
    return 0

def pair(list1, list2): 
    ret = [] 
    if(len(list1) != len(list2)):
        return []
    if(len(list1) == 1):
        if(list1[0] == list2[0]):
            return [0]
        else:
            return []
    first = list2[0]
    for i in range(len(list1)):
        if(list1[i] == first):
            inductive_list1 = list1.copy()
            inductive_list1.pop(i)
            inductive_list2 = list2.copy()
            inductive_list2.pop(0)
            ih = pair(inductive_list1, inductive_list2)
            prefactors = 0
            for j in range(i):
                prefactors += a(list1[j], list1[i])
            for j in range(len(ih)):
                ih[j] += prefactors
            ret.extend(ih)
    return ret

def mat(listofLists):
    M = zeros(len(listofLists))
    for i in range(len(listofLists)):
        for j in range(len(listofLists)):
            lst = pair(listofLists[i], listofLists[j])
            for k in lst:
                M[i, j] += q**k
    return M.applyfunc(simplify)

def dual(listofLists): # for computing specific dual elements
    M = zeros(len(listofLists))
    for i in range(len(listofLists)):
        for j in range(len(listofLists)):
            lst = pair(listofLists[i], listofLists[j])
            for k in lst:
                M[i, j] += q**k
    N= M.inv()
    return N.applyfunc(simplify)

def perm(tentlist): # given list of lists, removes linear dependence
    fin = []
    for i in range(len(tentlist)):
        fin1 = list(fin)
        fin1.append(tentlist[i])
        M = mat(fin1)
        if(M.det() != 0):
            fin.append(tentlist[i])
    return fin

def result(setofindices): # gives a basis of elements
    tentlist = list(set(itertools.permutations(setofindices)))
    for i in range(len(tentlist)):
        tentlist[i] = list(tentlist[i])
    tentlist = reduce(tentlist)
    return perm(tentlist)

def reduce(tentlist): # remove duplicates
    visited=[0 for i in range(len(tentlist))]
    adj = [[0 for i in range(len(tentlist))] for j in range(len(tentlist))]
    finlist = []
    for n in range(len(tentlist)):
        l = tentlist[n]
        for ind in range(len(l) - 1):
            if(a(l[ind], l[ind+1]) == 0):
                ledit = l[:]
                ledit[ind], ledit[ind+1] = ledit[ind+1], ledit[ind]
                m = tentlist.index(ledit)
                adj[m][n] = 1
                adj[n][m] = 1
    adjacents = [[] for i in range(len(tentlist))]
    for i in range(len(tentlist)):
        temp = []
        for j in range(len(tentlist)):
            if(adj[i][j] == 1):
                temp.append(j)
        adjacents[i] = temp
    for n in range(len(tentlist)):
        if(visited[n] == 0):
            finlist.append(tentlist[n])
            qu = deque([n])
            while(qu):
                curr = qu.popleft()
                for neigh in adjacents[curr]:
                    if(visited[neigh] == 0):
                        visited[neigh] = 1
                        qu.append(neigh)
    return finlist
\end{verbatim}
\end{multicols}
\end{tiny}

\bibliographystyle{alpha}
\bibliography{REU2020Paper}

\end{document}